\tikzset{%
    symbol/.style={%
        ,draw=none
        ,every to/.append style={%
            edge node={node [sloped, allow upside down, auto=false]{$#1$}}}
    }
}
\DeclareMathOperator{\CE}{\mathrm{CE}}
\newcommand{\Vol}{dx^{0\ldots n}}
\DeclareMathOperator{\rescaling}{\Theta}
\DeclareMathOperator{\id}{\mathbbm{1}}
\DeclareMathOperator{\action}{\circlearrowleft}
\theoremstyle{plain}
	\newtheorem{thm}{Theorem}[section]
	\newtheorem*{bigthm}{Main Theorem}
	\newtheorem{prop}[thm]{Proposition}
	\newtheorem{lemma}[thm]{Lemma}
	\newtheorem{cor}[thm]{Corollary}
\theoremstyle{definition}
	\newtheorem{defi}[thm]{Definition}
\theoremstyle{remark}
	\newtheorem{remark}[thm]{Remark}
	\newtheorem{example}[thm]{Example}
\title{Multisymplectic actions of compact Lie groups on spheres}
\date{}
\author{
	Antonio Michele MITI 
	\thanks{
	KU Leuven, Department of Mathematics, Celestijnenlaan 200B box 2400, BE-3001 Leuven, Belgium;
	\href{mailto:antoniomichele.miti@kuleuven.be}{antoniomichele.miti@kuleuven.be};
	}
	\thanks{Dipartimento di Matematica e Fisica  ``Niccol\`o Tartaglia", Universit\`a Cattolica del Sacro Cuore, Via dei Musei 41, 25121 Brescia, Italy.}
	~ and Leonid RYVKIN
	\thanks{IMJ-PRG, Université de Paris, Batiment Sophie Germain, 8 place Aur\'elie Nemours, 75013 Paris, France
	\href{mailto:leonid.ryvkin@imj-prg.fr}{Leonid.Ryvkin@imj-prg.fr};}
}
\begin{document}

\maketitle

\begin{abstract}
	We investigate the existence of homotopy comoment maps (comoments) for high-dimensional spheres seen as multisymplectic manifolds.
	Especially, we solve the existence problem for compact effective group actions on spheres and provide explicit constructions for such comoments in interesting particular cases.
\end{abstract} 

\noindent {\bf MSC-classification (2010):} 	53D05, 53D20, 54H15, 57S15, 57S25,  22C05, 22F30, 37K05. \\

\noindent {\bf Keywords:} multisymplectic geometry, groups of multisymplectic diffeomorphisms, compact groups actions, homotopy comoment maps, cohomological obstructions.

\tableofcontents

\section*{Introduction}
\addcontentsline{toc}{section}{Introduction}

\emph{Multisymplectic structures} (also called \emph{``$n$-plectic''}) are a rather straightforward generalization of symplectic ones where closed non-degenerate $(n+1)$-forms replace $2$-forms.

Historically, the interest in multisymplectic manifolds, i.e. smooth manifolds equipped with an $n$-plectic structure,  has been motivated by
the need of understanding the geometrical foundations of first-order classical field theories.
The key point is that, just as one can associate a symplectic manifold to an ordinary classical mechanical system (e.g. a single point-like particle constrained to some manifold), it is possible to associate a multisymplectic manifold to any classical field system (e.g. a continuous medium like a filament or a fluid).
It is important to stress that mechanical systems are not the only source of inspiration for instances of this class of structures. For example, any oriented $n$-dimensional manifold can be considered $(n-1)$-plectic when equipped with a volume form and semisimple Lie groups have a natural interpretation as $2$-plectic manifolds.

As proposed by Rogers in \cite{Rogers2010} (see also \cite{Zambon2010}), this generalization can be expanded by introducing a higher analogue of the Poisson algebra of smooth functions (also known as ``observable algebra'')  to the multisymplectic case.
Namely, Rogers proved that the algebraic structure encoding the observables on a multisymplectic manifold is the one of an $L_{\infty}$-algebra, that is, a graded vector space endowed with skew-symmetric multilinear brackets satisfying the Jacobi identity up to coherent homotopies.

The latter concept allowed for a natural extension of the notion of moment map, called \emph{(homotopy) comoment map}, originally defined in \cite{Callies2016}, associated to an infinitesimal action of a Lie group on a manifold preserving the multisymplectic form. 
As this concept is particularly subtle and technical, at the moment there are only few meaningful examples worked out in full detail. 
We can cite, for instance, \cite{Callies2016} for a broad account and \cite{Miti2018} regarding the comoment pertaining to volume-preserving diffeomorphisms acting on oriented Riemannian manifolds.

In this work, we try to address this problem by giving new insights on multisymplectic actions of compact groups and thus deriving existence results and explicit constructions for comoment maps related to actions on spheres.

\begin{bigthm}\label{thm:mainresult} (Proposition \ref{prop:intransitive} and Theorem \ref{thm:surprise})
	Let $G$ be a compact Lie group acting multisymplectically and effectively on the $n$-dimensional sphere $S^n$ equipped with the standard volume form, then the action admits a comoment map if and only if $n$ is even or the action is not transitive. 
\end{bigthm}

\noindent {\bf Interesting particular cases of the Main Theorem:}
\begin{itemize}
	\item The action of $SO(n)$ on $S^n$ is not transitive, hence it admits a comoment for all $n$. 
	We shall give an explicit construction for such a comoment in Subsection \ref{subsecson} that extends the construction given in \cite{Callies2016} only up to the $5$-dimensional sphere.
		\item The action of $SO(n+1)$ on $S^n$ only admits a comoment for even $n$. For the cases where such a comoment exist, giving explicit formulas seems to be a non-trivial task. We give explicit formulas for the first two components $f_1$ and $f_2$ in terms of the standard basis of $\mathfrak {so}(n)$ in Subsection \ref{subsectra}, leaving an explicit description of the higher components as an open question.
		The core idea will be to focus on the particular cohomology of the acting group rather than working on the analytical problem of finding the primitives required for the construction of the components of a comoment. 
		\item For $SO(4)$ acting on $S^3$ no comoment exists. However, this problem can be fixed by centrally extending the Lie algebra $\mathfrak {so}(n)$ to a suitable $L_\infty$-algebra (cf. \cite{Callies2016,Mammadova2019}).
\end{itemize}

\paragraph{Outline of the paper:} 
In the first section we survey the theory of  comoments in multisymplectic geometry, as introduced in \cite{Callies2016}. We include proofs for some known results in order to achieve a complete and self-contained exposition. 
The main novelty in this section is an intrinsic proof of Theorem \ref{cpteqcom}, which does not depend on the choice of a model for equivariant cohomology. 

We then prove the Main Theorem for the non-transitive case in Section \ref{secintrans} and the transitive case in Section \ref{sectra}. 
In addition to proving the abstract theorem, we give explicit constructions for important classes of group actions and highlight interesting phenomena.

\paragraph{Conventions:}
Given any cochain complex $C=(C^\bullet,d)$ we denote by $Z^k(C)=ker(d^{(k)})$ the subgroup of cocycles and by $B^k(C)=d~C^{k-1}$ the subgroup of coboundaries. 
In the case of chain complexes we employ the same notation with lowered indices.

We denote with $\partial: \Lambda^\bullet \mathfrak{g} \to \Lambda^{\bullet-1} \mathfrak{g}$ the boundary operator of the Chevalley-Eilenberg complex of a Lie algebra $\mathfrak{g}$, which is given explicitly on homogeneous elements by the following equation:
\begin{equation} \label{eq:CE_boun}
\partial (\xi_1 \wedge \xi_2 \wedge \dots \wedge \xi_k) := 
\sum_{1\leq i< j \leq n} (-1)^{i+j}\, [\xi_i, \xi_j] \wedge \xi_1 \wedge \dots
\wedge {\hat \xi}_i \wedge \dots \wedge {\hat \xi}_j \wedge \dots \wedge \xi_n,
\end{equation}
where $\hat{}$ denotes deletion as usual and with $\partial_0 = 0$, for all $\xi_i \in \mathfrak{g}$.
\\
Dually, we define the Chevalley-Eilenberg differential as
$\delta_{\CE}\colon\Lambda^n \mathfrak{g}^\ast \to \Lambda^{n+1} \mathfrak{g}^\ast$ whose action on an element $\phi \in \Lambda^\bullet \mathfrak g^*$ is given by $\delta_{CE} \phi := \phi \circ \partial$.

We consider the contraction operator $\iota$ to be defined on multi-vector fields. Specifically, the contraction with the wedge product of $k$ vector fields $\xi_i$ is given as follows
$$ \iota(\xi_1 \wedge \dots \wedge \xi_k) = \iota_{\xi_k}\dots\iota_{\xi_1} .$$

\paragraph{Acknowledgements:}
The authors thank Benjamin B\"ohme, Camille Laurent-Gengoux, Mauro Spera, Tilmann Wurz\-ba\-cher and Marco Zambon for helpful and motivating conversations.
A.M.M. acknowledges partial support by the National Group for Algebraic and Geometric Structures, and their Applications (GNSAGA – INdAM)
and by FWO under the EOS project G0H4518N (Belgium).
L. R. was supported by the Ruhr University Research School PLUS, funded by Germany’s Excellence Initiative [DFG GSC 98/3].

\section{Comoments on multisymplectic manifolds}\label{secgeom}
In this section, we give a short introduction to the theory of comoments in multisymplectic geometry focusing on the geometric description of their cohomological obstructions.
Most results can be found in the literature  \cite{Rogers2010,Callies2016,Fregier2015,zbMATH06448534}
but we present some of the proofs for a more clear and self-contained exposition.
Our main contribution is an intrinsic proof of Theorem \ref{cpteqcom} which does not depend on a choice of model for the equivariant cohomology.

%
\begin{defi}
	A \emph{pre-multisymplectic manifold} of degree $k$ is a pair $(M,\omega)$, where $M$ is a smooth manifold and $\omega\in\Omega^k(M)$ is a closed differential form. The manifold is called \emph{multisymplectic} if $TM\to \Lambda^{k-1}T^*M, ~v\mapsto \iota_v\omega$ is an injective bundle map. For fixed degree $k$ of the form such manifolds are also called ``$(k{-}1)$-plectic''. 
\end{defi}
\begin{example}[Symplectic manifolds]
	A symplectic manifold is, by definition, a 1-plectic manifold.
\end{example} 
\begin{example}[Oriented manifolds] 
	Any $n$-dimensional manifold equipped with a volume form is an $(n{-}1)$-plectic manifold.
\end{example} 
\begin{example}[Multicontangent bundles] 
	Consider a smooth manifold $N$, the corresponding \emph{Multicotangent bundle} $\Lambda = \Lambda^n T^\ast N$ is naturally $n$-plectic.
	Indeed $\Lambda$ can be endowed with a canonical multisymplectic $(n+1)$-form $\omega \coloneqq d \theta$ obtained from a tautological potential $n$-form $\theta \in \Omega^n(\Lambda)$ given by:
		\begin{displaymath}
		\begin{split}
			\left[ \iota_{u_1 \wedge \dots \wedge u_n} \theta \right]_\eta 
			&= \iota_{(T \pi)_\ast u_1 \wedge \dots \wedge (T \pi)_\ast u_n} \eta \\
			&= \iota_{u_1 \wedge \dots \wedge u_n} \pi^\ast \eta 
			\qquad \qquad \forall \eta \in \Lambda \, , \: \forall u_i \in T_\eta \Lambda 
		\end{split}
		\end{displaymath}
		where $\pi:\Lambda \to N$ is the bundle projection and $T\pi: T\Lambda \to T N$ is the corresponding tangent map.
		This construction is the ``higher analogue'' of the canonical symplectic structure naturally defined on any cotangent bundle. 
		Note, however, that this is not yet the ``higher analogue'' of a \emph{phase space}, see \cite{Gotay1998a} for further details or \cite{Ryvkin2018} for a more recent review.
\end{example}

\vspace{1em}
Exactly as it happens in symplectic geometry, fixing a (pre-)$n$-plectic structure $\omega$ on $M$ 
provides a criterion for identifying special classes of vector fields and differential forms. 
We will make use of the following nomenclature:
\begin{defi}
		A vector field $X \in \mathfrak{X}(M)$ is called \emph{multisymplectic} if it preserves the pre-multisymplectic form $\omega$, i.e. $\mathcal{L}_X \omega = d\iota_X\omega = 0$.
		If $\iota_X \omega$ is also exact, the vector field is called \emph{Hamiltonian}. 
		Accordingly, we define the subspace of \emph{Hamiltonian forms} as follows		
		$$
			\Omega^{n{-}1}_{Ham}(M,\omega)=\left\{\alpha~|~\exists\, v_\alpha\in\mathfrak X(M) 
			~:~ d\alpha=-\iota_{v_\alpha}\omega \right\}\subset \Omega^{n{-}1}(M).
		$$
		When $\omega$ is non-degenerate, the Hamiltonian vector field $v_\alpha$ associated to a fixed Hamiltonian form $\alpha$ is unique.
\end{defi}
Generalizing the construction of a Lie bracket on the observables (i.e. smooth functions, i.e. differential forms of degree zero) on a symplectic manifold,
we can construct a family of (multi)-brackets on Hamiltonian forms simply contracting $\omega$ with their corresponding Hamiltonian vector fields.
These brackets satisfy the Jacobi identity (and its higher analogues) up to total divergences, hence giving rise to a Lie algebra structure up to homotopy, i.e. an $L_\infty$-algebra.
\begin{defi}[\cite{Rogers2010}, cf. also \cite{MR1608547}]
	The \emph{$L_\infty$-algebra of observables} $L_\infty(M,\omega)$ 
	of the (pre)-$n$-plectic manifold $(M,\omega)$ consists of a chain complex $L_\bullet$
	\begin{center}
		\begin{tikzcd}[column sep= small,row sep=small]
				0 \ar[r]& L_{n-1}\ar[symbol=\coloneqq,d] \ar[r]&
		\cdots\ar[r]&L_{k-2}\ar[symbol=\coloneqq,d]\ar[r]&\cdots\ar[r]&
		L_1\ar[symbol=\coloneqq,d]\ar[r]&L_0\ar[symbol=\coloneqq,d]\ar[r]&0\\
		& \Omega^0\ar["d",r]&\cdots\ar["d",r]&{\Omega^{n+1-k}}
		\ar["d",r]&\cdots\ar["d",r]&\Omega^{n-2}\ar["d",r]
		&\Omega^{n-1}_{\textrm{Ham}}&
		\end{tikzcd},
	\end{center}				
	which is a truncation of the de-Rham complex with inverted grading,
	endowed with $n$ (skew-symmetric) multibrackets $(2 \leq k \leq n+1)$
	\begin{equation}
		\begin{tikzcd}[column sep= small,row sep=0ex]
				[\cdot,\dots,\cdot]_k \colon& \Lambda^k\left(\Omega^{n-1}_{\textrm{Ham}}\right) 	\arrow[r]& 				\Omega^{n+1-k} \\
				& \sigma_1\wedge\dots\wedge\sigma_k 	\ar[r, mapsto]& 	\varsigma(k)\iota_{v_{\sigma_k}}\dots\iota_{v_{\sigma_1}}\omega 
		\end{tikzcd}		
	\end{equation}
	where $v_{\sigma_k}$ is any Hamiltonian vector field associated to $\sigma_k\in \Omega^{n-1}_{\textrm{Ham}}$ and $\varsigma(k) := - (-1)^{\frac{k(k+1)}{2}}$ is the Koszul sign.
\end{defi} 

\begin{lemma}[Theorem 5.2 in \cite{Rogers2010}] The $L_\infty$-algebra of observables $L_\infty(M,\omega)$ is an $L_\infty$-algebra.
\end{lemma}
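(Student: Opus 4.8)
The plan is to verify directly the defining relations of an $L_\infty$-algebra for the brackets $l_1 := d$ (the de Rham differential read as the chain differential of $L_\bullet$) and $l_k := [\,\cdot,\dots,\cdot\,]_k$ for $2\le k\le n+1$, extended by zero whenever an argument has positive degree, i.e. lies in some $\Omega^{n-1-j}$ with $j\ge 1$. Recall that these relations read, for each $m\ge 1$,
$$
\sum_{i+j=m+1}\ \sum_{\sigma}\ \chi(\sigma)\,(-1)^{i(j-1)}\; l_j\bigl(l_i(x_{\sigma(1)},\dots,x_{\sigma(i)}),\,x_{\sigma(i+1)},\dots,x_{\sigma(m)}\bigr)=0,
$$
with $\sigma$ ranging over $(i,m{-}i)$-unshuffles and $\chi(\sigma)$ the appropriate Koszul sign. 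My first move is a drastic reduction using the extension-by-zero convention: a composite $l_j(l_i(\dots),\dots)$ with $i,j\ge 2$ can be nonzero only if $l_i(\dots)$ again lands in $L_0=\Omega^{n-1}_{\mathrm{Ham}}$, which by the degree count $l_i(L_0^{\wedge i})\subseteq\Omega^{n+1-i}$ forces $i=2$. Moreover $l_1=d$ vanishes on $L_0$ (the complex is truncated there) and $v_{d\alpha}=0$ for any $\alpha$, so the only surviving terms are $l_1\circ l_m$ and the nested $l_{m-1}(l_2(\cdot,\cdot),\dots)$, and the relations with a positive-degree input hold automatically. Thus the entire infinite family of axioms collapses to the single identity, for $\sigma_1,\dots,\sigma_m\in\Omega^{n-1}_{\mathrm{Ham}}$,
$$
d\,[\sigma_1,\dots,\sigma_m]_m \;=\; -\sum_{a<b}\ \pm\,\bigl[[\sigma_a,\sigma_b]_2,\ \sigma_1,\dots,\widehat{\sigma_a},\dots,\widehat{\sigma_b},\dots,\sigma_m\bigr]_{m-1}.
$$

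The heart of the argument is then a single Cartan-calculus identity. Using the contraction convention $\iota(\xi_1\wedge\cdots\wedge\xi_k)=\iota_{\xi_k}\cdots\iota_{\xi_1}$, I would establish, for multisymplectic vector fields $v_1,\dots,v_k$,
$$
d\,\iota_{v_1\wedge\cdots\wedge v_k}\,\omega \;=\; \sum_{1\le i<j\le k} \pm\,\iota_{[v_i,v_j]}\,\iota_{v_1\wedge\cdots\widehat{v_i}\cdots\widehat{v_j}\cdots\wedge v_k}\,\omega,
$$
where hats denote omission of the two contracted fields. This follows by induction on $k$, peeling off one contraction and repeatedly applying Cartan's magic formula $\mathcal L_v=d\iota_v+\iota_v d$ together with the graded commutator $[\mathcal L_v,\iota_w]=\iota_{[v,w]}$; every surviving Lie-derivative term is then killed by the two structural facts available here, namely $d\omega=0$ and $\mathcal L_{v_i}\omega=0$ for each $i$.

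Next I would record the compatibility gluing the two displays together. Specializing the Cartan identity to $k=2$ shows that $[\sigma,\tau]_2=\varsigma(2)\,\iota_{v_\tau}\iota_{v_\sigma}\omega$ is again a Hamiltonian form, with Hamiltonian vector field $[v_\sigma,v_\tau]$ up to the fixed sign; hence contracting by $v_{[\sigma,\tau]_2}$ is the same as contracting by $[v_\sigma,v_\tau]$. Substituting the definition $[\sigma_1,\dots,\sigma_m]_m=\varsigma(m)\,\iota_{v_{\sigma_m}}\cdots\iota_{v_{\sigma_1}}\omega$ into the left-hand side of the reduced identity and applying the Cartan formula turns it into a sum of iterated contractions with brackets $[v_{\sigma_a},v_{\sigma_b}]$; the compatibility then identifies each such term with exactly one summand $[[\sigma_a,\sigma_b]_2,\dots]_{m-1}$ on the right, and the two sides match. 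The graded skew-symmetry of the brackets, needed to even phrase the axioms, is immediate from the antisymmetry of the wedge of vector fields together with the sign $\varsigma(k)$.

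The main obstacle I anticipate is not geometric but combinatorial: reconciling the three sources of signs — the Koszul prefactors $\varsigma(k)$ built into the brackets, the axiom signs $\chi(\sigma)(-1)^{i(j-1)}$, and the signs generated by commuting contractions past one another in the Cartan computation — so that the matching in the last step is an exact cancellation rather than merely a term-by-term correspondence up to sign. All the genuine content sits in the closedness and invariance of $\omega$ feeding the Cartan identity; isolating that identity cleanly before attempting the sign chase is, I expect, what makes the verification tractable.
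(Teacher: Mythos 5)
Your outline is correct, but note that it takes a genuinely different route from the paper, which offers no argument at all: the paper's ``proof'' is a citation to Rogers's original article (and to Ryvkin--Wurzbacher for a more elementary exposition). What you reconstruct is essentially Rogers's own verification: the degree bookkeeping that collapses the $L_\infty$-relations to the single family $d\,[\sigma_1,\dots,\sigma_m]_m=-\sum_{a<b}\pm\bigl[[\sigma_a,\sigma_b]_2,\dots\bigr]_{m-1}$, a Cartan-type identity for the closed invariant form $\omega$, and the identification of $[v_{\sigma_a},v_{\sigma_b}]$ as a Hamiltonian vector field of $[\sigma_a,\sigma_b]_2$. Three remarks would tighten this into a complete proof. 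First, the ``single Cartan-calculus identity'' you propose to establish by induction is already available in this paper as Lemma \ref{lemma:multicartan}: applied to $\omega$ with $d\omega=0$ and $\mathcal{L}_{v_i}\omega=0$ it gives $(-1)^m d\,\iota(v_1\wedge\dots\wedge v_m)\,\omega=\iota\bigl(\partial(v_1\wedge\dots\wedge v_m)\bigr)\omega$, with all signs fixed once and for all by the boundary formula \eqref{eq:CE_boun}; this turns your anticipated ``sign chase'' into a finite mechanical check against the prefactors $\varsigma(k)$, and the cancellation is indeed exact --- this is precisely how Theorem 5.2 of Rogers is proved. Second, you should explicitly include the top relation at arity $m=n+2$, where $l_{n+2}:=0$ and the required vanishing $\sum_{a<b}\pm\, l_{n+1}\bigl(l_2(\sigma_a,\sigma_b),\dots\bigr)=0$ follows from the same lemma because contracting $n+2$ vector fields into the $(n+1)$-form $\omega$ gives zero. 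Third, your step ``$v_{d\alpha}=0$'' tacitly uses uniqueness of Hamiltonian vector fields, i.e.\ nondegeneracy; since the paper states the definition in pre-$n$-plectic generality (where $v_\sigma$ is \emph{any} Hamiltonian vector field), replace it by the observation that any Hamiltonian vector field $w$ of $d\alpha$ satisfies $\iota_w\omega=0$, so the relevant iterated contractions vanish for every choice --- the same observation shows the multibrackets themselves are well defined. What your route buys is a self-contained verification; what the paper's buys is brevity in a survey section.
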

\begin{proof} We refer to \cite{Rogers2010} for the original construction and the proof and to \cite{Ryvkin2016a} for a more elementary exposition including an introduction to $L_\infty$-algebras.
\end{proof}

\vspace{1em}
When one fixes a form $\omega$ on a manifold $M$ it is natural to highlight the group actions preserving this extra structure, also known as ``symmetries''.

\begin{defi}
A right action $\vartheta$ of a Lie group $G$ on $M$ is called \emph{multisymplectic} if it preserves the multisymplectic form, i.e. $\vartheta^*_g\omega=\omega$ for all $g\in G$, where $\vartheta_g=\vartheta(\cdot,g)$.
\\
We call \emph{(infinitesimal) multisymplectic Lie algebra action}
of $\mathfrak g$ on $M$ a Lie algebra homomorphism $\mathfrak g\to \mathfrak X(M), x\mapsto v_x$ such that $\mathcal{L}_{v_x}\omega=0$ for all $x\in\mathfrak g$.
\end{defi}
\begin{remark}
	In what follows, all the group actions considered are on the right.
	However, we decide to denote the action map as $\vartheta: G\times M \to M$, in place of the more natural choice  $\vartheta: G\times M \to M$, in order to agree with the sign conventions usually found in the literature.
\end{remark}
\begin{remark}
	For a connected Lie group $G$, a right action $\vartheta$ is multisymplectic if and only if the corresponding infinitesimal right action $v: \mathfrak{g}\to \mathfrak{X}(M)$
	given by  
	\begin{displaymath}
		v_\xi(m) = \left.\dfrac{d}{dt}\right\vert_0	\vartheta(m,\exp(t\xi)) \qquad \forall m \in M , \xi \in \mathfrak{g}
	\end{displaymath}		
	is multisymplectic.
	Note that, when considering left actions, the corresponding infinitesimal action is a Lie algebra anti-homomorphism.
\end{remark}

Let us now focus on the non degenerate, i.e. multisymplectic, case. 
In this context it is possible to define a higher analogue of the comoment map well-known in symplectic geometry:
\begin{defi}[\cite{Callies2016}]
	Let $v:\mathfrak g\to \mathfrak X(M)$ be a multisymplectic Lie algebra action.
	A \emph{homotopy comoment map} (or \emph{comoment} for short) pertaing to $v$ is 
	an $L_\infty$-morphism $(f)=\{f_i\}_{i=1,...,n}$ from $\mathfrak g$ to $L_\infty(M,\omega)$ satisfying $$df_1(\xi)=-\iota_{v_\xi}\omega \qquad \forall \xi\in\mathfrak{g}.$$ 
	A comoment for a Lie group action $\phi:M\times G\to M$ is defined as a comoment of its corresponding infinitesimal action. 
\end{defi}
More conceptually, a comoment is an $L_\infty$-morphism $(f):\mathfrak{g}\to L_\infty(M,\omega)$ lifting the action $v:\mathfrak{g}\to \mathfrak{X}(M)$, 
i.e. making the following diagram commute in the $L_\infty$-algebras category:
\begin{center}
\begin{tikzcd}[column sep = large]
	& L_\infty (M,\omega)\ar[d,"\pi"]\\
	\mathfrak{g} \ar[ur,"(f)",dashed]\ar[r,"v"'] & \mathfrak{X}(M)
\end{tikzcd}
\end{center}
where $\pi$ is the trivial $L_\infty$-extension\footnote{Note that any Lie algebra can be seen as an $L_\infty$-algebra concentrated in degree $0$, therefore any $L_\infty$-morphism $L\to\mathfrak{g}$ is simply given by a linear map $L_0 \to \mathfrak{g}$ preserving the binary brackets.}
of the function mapping any Hamiltonian form to the unique corresponding Hamiltonian vector field.
\\
In the following we will make use of an explicit version of this definition which is expressed by the following lemma:
\begin{lemma}[\cite{Callies2016}]
	A comoment $(f)$ for the infinitesimal	multisymplectic action	of ${\mathfrak g}$ on $M$ 
	is given explicitly by a sequence of linear maps
	\begin{displaymath}
		(f)  = \big\{ f_i: \,\,\, \Lambda^i {\mathfrak g} \to \Omega^{n-i}(M)
		\quad \vert \quad 
		0\leq i \leq n+1  \big\}	
	\end{displaymath}
	fulfilling a set of equations:
	\begin{equation}\label{eq:fk_hcmm}
	-f_{k-1} (\partial p) = d f_k (p) + \varsigma(k) \iota(v_p) \omega
	\end{equation}
	together with the condition
	\begin{displaymath}
		f_0 = f_{n+1} = 0 
	\end{displaymath}
	for all $p \in \Lambda^k\mathfrak{g}$ and $k=1,\dots n+1$.
	Here $\partial$ is the Chevalley-Eilenberg boundary operator defined in equation \eqref{eq:CE_boun}.
\end{lemma}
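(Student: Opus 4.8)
\emph{The plan is to} unwind the abstract definition of a homotopy comoment --- an $L_\infty$-morphism $(f)\colon\mathfrak g\to L_\infty(M,\omega)$ lifting $v$ through $\pi$ --- into its components, and to show that the defining system of $L_\infty$-morphism equations collapses precisely onto \eqref{eq:fk_hcmm}. First I would recall that an $L_\infty$-morphism is a family of graded skew-symmetric maps $f_k$ of degree $k-1$; since every element of $\mathfrak g$ sits in degree $0$, the map $f_k$ must land in the degree-$(k-1)$ piece $L_{k-1}=\Omega^{n-k}(M)$ of the observable complex, which already reproduces the claimed domains and codomains $f_k\colon\Lambda^k\mathfrak g\to\Omega^{n-k}(M)$. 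The truncation of the complex then forces $f_k$ to vanish outside the range $1\le k\le n$, and the degenerate slots $f_0$ and $f_{n+1}$ (valued in $\Omega^n$ and $\Omega^{-1}=0$ respectively, both outside the truncated complex) are set to zero, giving the stated boundary conditions.

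The core of the argument is to evaluate the general morphism equation on a decomposable $p=\xi_1\wedge\dots\wedge\xi_k$ and to observe that two simplifications occur. On the source side, $\mathfrak g$ carries only the binary bracket (it is a Lie algebra concentrated in degree $0$, so its unary and higher brackets vanish), whence the only surviving contributions are of the form $f_{k-1}([\xi_i,\xi_j],\dots)$; summing over all pairs with the appropriate Koszul signs is, by definition \eqref{eq:CE_boun}, exactly $f_{k-1}(\partial p)$. On the target side, the higher brackets $[\cdot,\dots,\cdot]_j$ of $L_\infty(M,\omega)$ with $j\ge 2$ are built from Hamiltonian vector fields and therefore vanish as soon as one of their arguments lies in positive degree, i.e.\ outside $L_0=\Omega^{n-1}_{\mathrm{Ham}}$. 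Consequently every partition of the $k$ inputs into blocks contributes zero unless either there is a single block of size $k$ --- producing the unary term $d\,f_k(p)$ --- or all blocks are singletons --- producing $[f_1(\xi_1),\dots,f_1(\xi_k)]_k$. Using the lifting condition $df_1(\xi)=-\iota_{v_\xi}\omega$, which says that $f_1(\xi)$ is Hamiltonian with Hamiltonian vector field $v_\xi$, this last term equals $\varsigma(k)\,\iota_{v_{\xi_k}}\cdots\iota_{v_{\xi_1}}\omega=\varsigma(k)\,\iota(v_p)\omega$. Matching the source and target contributions yields \eqref{eq:fk_hcmm}.

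The step I expect to require the most care is the bookkeeping of signs: one has to fix a convention for the $L_\infty$-morphism equations (shifted versus unshifted grading, placement of the Koszul signs), verify that the pairwise-bracket sum on the source reproduces $\partial$ with the precise sign appearing in front of $f_{k-1}(\partial p)$, and check the factor $\varsigma(k)$ against the normalization of the multibracket. A secondary subtlety is the base case $k=1$, where the two surviving target terms formally coincide: here one must confirm that \eqref{eq:fk_hcmm} degenerates correctly, namely that $\partial$ annihilates $\Lambda^1\mathfrak g$ and $f_0=0$, so that the equation reduces to the lifting condition $df_1(\xi)=-\iota_{v_\xi}\omega$ with $\varsigma(1)=1$. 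Once the conventions are pinned down, the remaining verification is routine, and the equivalence between the diagrammatic definition and the explicit system \eqref{eq:fk_hcmm} follows.
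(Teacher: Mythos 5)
Your proposal is correct: unwinding the $L_\infty$-morphism equations, using that $\mathfrak g$ has only the binary bracket and that the multibrackets of $L_\infty(M,\omega)$ vanish whenever an argument has positive degree, so that only the unary term $d f_k(p)$ and the all-singleton term $\varsigma(k)\iota(v_p)\omega$ survive, is exactly the standard argument of \cite{Callies2016}, to which the paper defers without reproducing a proof. Your treatment also correctly covers the degenerate cases, including the nontrivial top constraint at $k=n+1$ where $f_{n+1}=0$ forces $-f_n(\partial p)=\varsigma(n+1)\iota(v_p)\omega$.
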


\begin{defi} A comoment $(f)$ is called {\it G-equivariant}, if
	\begin{displaymath}
	{\mathcal L}_{v_\xi} f_k({\mathbf b}) = f_1 ([\xi, {\mathbf b}])
	\qquad \forall \xi \in \mathfrak{g} \; , \mathbf{b} \in \Lambda^k \mathfrak{g},
	\end{displaymath}
	where $[\xi, {\mathbf b}]$ is the adjoint action of $\mathfrak{g}$ on $\Lambda^k \mathfrak{g}$ which, on decomposable elements, is given by the formula 
	\begin{equation}\label{eq:adjointactionwedge}
	[\xi, x_1\wedge\dots\wedge x_k] =
	\sum_{l=0}^k
	(-1)^{k-l} [v,x_l] \wedge x_k \wedge \dots \wedge \hat{x_l} \wedge \dots \wedge 	x_1.
	\end{equation}
\end{defi}

\subsection{Cohomological obstructions to comoment maps}
Consider an infinitesimal action of $\mathfrak{g}$ on the pre-$n$-plectic manifold $(M,\omega)$ preserving the form $\omega$.
As shown in \cite{Fregier2015, zbMATH06448534} a comoment for this action can be interpreted as a primitive of a certain cocycle in a cochain complex. 

\begin{defi}\label{def:comomentbicomplex} 
The bi-complex naturally associated to the action of $\mathfrak{g}$ on $M$ is defined by
	\begin{displaymath}
		(C_\mathfrak{g}^{\bullet,\bullet} = \Lambda^{\geq 1} 
		\mathfrak{g}^*\otimes \Omega^\bullet(M), \delta_\text{CE},d),	
	\end{displaymath}
	where $d$ denotes the de Rham differential and $\delta_{CE}:\Lambda^k\mathfrak g^*\to \Lambda^{k+1}\mathfrak g^*$ the Lie algebra cohomology differential, defined on generators by
	\begin{displaymath}
	\delta_{CE}(f)(\xi_1,...,\xi_k)=\sum_{i<j}(-1)^{i+j}f([\xi_i,\xi_j],\xi_1,\dots,\hat{\xi}_i,\dots,\hat{\xi}_j,\dots,\xi_{k}).
	\end{displaymath}
	The  corresponding total complex is given by
	\begin{displaymath}
		(C_\mathfrak{g}^{\bullet}, d_\text{tot} = 
		\delta_\text{CE}\otimes \text{id} + \text{id}\otimes d),
	\end{displaymath}
where, according to the Koszul sign convention, $d_{\text{tot}}$ acts on an element of $\Lambda^k \mathfrak{g}^*\otimes \Omega^\bullet(M)$ as $\delta_\text{CE} + (-1)^k d$.
\end{defi}
	
\begin{thm}[Proposition 2.5 in \cite{Fregier2015}, Lemma 3.3 in \cite{zbMATH06448534}]
Let $(M,\omega)$ be a pre-$n$-plectic manifold and $v:\mathfrak g\to \mathfrak X(M)$ be an infinitesimal multisymplectic action. 
The primitives of the natural cocycle
	\begin{equation}\label{eq:omegatildeobstruction}
		\tilde{\omega} = \sum_{k=1}^{n+1} (-1)^{k-1} \iota^k_\mathfrak{g} \omega \in C_\mathfrak{g}^{n+1},
	\end{equation}
where 
	\begin{align*}
		\iota^k_\mathfrak{g} \colon \Omega^\bullet(M)
		&\to \Lambda^k \mathfrak{g}^\ast \otimes \Omega^{\bullet-k}(M)
		\\ \omega&\mapsto \omega_k = 
		\left(p \mapsto \iota_{v_p} \omega  \right) ,
	\end{align*}
\noindent are in one-to-one correspondence with comoments of $v$.
In particular, a comoment exists if and only if~$[\tilde{\omega}]=0\in H^{n+1}(C_\mathfrak g^\bullet,d_ {tot})$.
\end{thm}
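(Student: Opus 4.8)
The plan is to exhibit an explicit linear bijection between primitives of $\tilde\omega$ and solutions of the system \eqref{eq:fk_hcmm}, i.e.\ comoments, and then to read off the cohomological criterion. Everything rests on one identification and one differential identity.

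First I would fix the identification underlying the argument. As $\mathfrak g$ is finite-dimensional, in each bidegree
\[
\Lambda^k\mathfrak g^\ast\otimes\Omega^l(M)\;\cong\;\operatorname{Hom}\!\big(\Lambda^k\mathfrak g,\Omega^l(M)\big),
\]
so an element $F\in C_\mathfrak g^n$ is precisely a finite family of linear maps $f_k\colon\Lambda^k\mathfrak g\to\Omega^{n-k}(M)$, with $f_k$ the component of $F$ in bidegree $(k,n-k)$. The bidegrees occurring in $C_\mathfrak g^n$ are exactly $k=1,\dots,n$, matching the convention $f_0=f_{n+1}=0$. Under this identification $\delta_{CE}$ is precomposition with the Chevalley--Eilenberg boundary $\partial$ of \eqref{eq:CE_boun} (by the very definition $\delta_{CE}\phi=\phi\circ\partial$), while $d$ is postcomposition with the de~Rham differential, weighted by the Koszul sign $(-1)^k$ of Definition \ref{def:comomentbicomplex}. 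Thus $F\mapsto\{f_k\}$ is a linear isomorphism.

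Second I would check that $\tilde\omega$ is genuinely a $d_{\text{tot}}$-cocycle, so that its class is defined. Writing $\tilde\omega=\sum_k(-1)^{k-1}\omega_k$ with $\omega_k(p)=\iota_{v_p}\omega$, the bidegree-$(k,n+2-k)$ part of $d_{\text{tot}}\tilde\omega$ equals $(-1)^k\,\delta_{CE}\omega_{k-1}-d\omega_k$, so closedness reduces to the single identity
\[
d\big(\iota_{v_p}\omega\big)=(-1)^{k}\,\iota_{v_{\partial p}}\omega\qquad(p\in\Lambda^k\mathfrak g).
\]
This is the analytic heart of the statement. For $k=1$ it is Cartan's formula $d\iota_{v_\xi}\omega=\mathcal L_{v_\xi}\omega-\iota_{v_\xi}d\omega=0$, using $d\omega=0$ and the multisymplecticity $\mathcal L_{v_\xi}\omega=0$. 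The inductive step peels off one contraction, commutes $d$ past it via $d\iota_{v_\xi}+\iota_{v_\xi}d=\mathcal L_{v_\xi}$, and repeatedly invokes $[\mathcal L_{v_\xi},\iota_{v_\eta}]=\iota_{[v_\xi,v_\eta]}$ together with $\mathcal L_{v_\xi}\omega=0$ and the homomorphism property $v_{[\xi,\eta]}=[v_\xi,v_\eta]$; the resulting commutator terms reassemble precisely into $\iota_{v_{\partial p}}\omega$. Conceptually this says that $p\mapsto\iota_{v_p}\omega$ is a chain map $(\Lambda^\bullet\mathfrak g,\partial)\to(\Omega^{n+1-\bullet}(M),d)$ up to sign, i.e.\ that the invariant closed form $\omega$ is an equivariant cocycle. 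The extreme bidegrees close off because $\partial$ vanishes on $\mathfrak g$ and $\iota_{v_p}\omega=0$ once $\deg p>n+1$, and the signs $(-1)^{k-1}$ in \eqref{eq:omegatildeobstruction} are exactly those needed for the telescoping to cancel.

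Finally I would unwind $d_{\text{tot}}F=\tilde\omega$ bidegree by bidegree. The $(k,n+1-k)$-component is $\delta_{CE}F_{k-1}+(-1)^k dF_k=(-1)^{k-1}\omega_k$, that is, on $p\in\Lambda^k\mathfrak g$,
\[
f_{k-1}(\partial p)+(-1)^k\,d f_k(p)=(-1)^{k-1}\,\iota_{v_p}\omega .
\]
A short sign computation, using $\varsigma(k)=(-1)^k\varsigma(k-1)$, shows that upon setting $h_k:=\varsigma(k)f_k$ the displayed equation becomes exactly \eqref{eq:fk_hcmm}, namely $-h_{k-1}(\partial p)=d h_k(p)+\varsigma(k)\iota(v_p)\omega$. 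Since $F\mapsto\{f_k\}\mapsto\{h_k\}$ is a bijection and the equations hold componentwise, primitives of $\tilde\omega$ correspond bijectively to comoments of $v$. In particular such a primitive exists if and only if the closed form $\tilde\omega$ is exact, i.e.\ if and only if $[\tilde\omega]=0$ in $H^{n+1}(C_\mathfrak g^\bullet,d_{\text{tot}})$. I expect the only real difficulty to be the sign bookkeeping: reconciling the three sources of signs --- the Koszul factor $\varsigma(k)$ in the multibrackets, the weighting $(-1)^{k-1}$ in \eqref{eq:omegatildeobstruction}, and the reversed-order contraction convention $\iota(\xi_1\wedge\dots\wedge\xi_k)=\iota_{\xi_k}\cdots\iota_{\xi_1}$ --- so that the bidegree components of $d_{\text{tot}}F=\tilde\omega$ line up on the nose with \eqref{eq:fk_hcmm} and the telescoping in the cocycle check is exact. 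The remaining ingredients are the generalized Cartan identity above and the routine $\operatorname{Hom}$--tensor identification.
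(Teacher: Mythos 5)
Your proposal is correct and takes essentially the same route as the paper's proof: the $\operatorname{Hom}$--tensor identification of elements of $C_\mathfrak g^n$ with families $\{f_k\}$, the Koszul rescaling $\tilde f_k=\varsigma(k)f_k$ (using $\varsigma(k)=(-1)^k\varsigma(k-1)$ and $\varsigma(k)^2=1$) to match the bidegree components of $d_{\text{tot}}\tilde f=\tilde\omega$ with equation \eqref{eq:fk_hcmm}, and the closedness of $\tilde\omega$ via the identity $d\,\iota_{v_p}\omega=(-1)^k\iota_{v_{\partial p}}\omega$. The only difference is cosmetic: where you sketch an inductive Cartan-calculus proof of that identity, the paper simply invokes the multi-Cartan magic formula (Lemma \ref{lemma:multicartan}).
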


\begin{proof}  
Being linear maps, the components $f_k$ can be regarded as elements of a vector space
	\begin{displaymath}
		f_k \in \Lambda^k \mathfrak{g}^\ast \otimes \Omega^{n-k}(M)\cong \text{Hom}_{\text{Vect}}(\Lambda^k \mathfrak{g}, \Omega^{n-k}(M))
	\end{displaymath}
	satisfying equation (\ref{eq:fk_hcmm})
	or, equivalently, as vectors $\tilde{f}_k =\varsigma(k) f_k$ satisfying
	\begin{equation}\label{eq:fk_hcmm_tilde}
		\tilde{f}_{k-1}(\partial p ) + (-1)^k d \tilde{f}_k ( p) = (-1)^{k-1}\iota(v_p) \omega .
	\end{equation}
	The last equation is obtained multiplying Equation (\ref{eq:fk_hcmm}) by the sign factor 
	$\varsigma{(k-1)}$ and noting that $\varsigma{(k-1)}\varsigma{(k)} = (-1)^k$.
	\\
	Upon considering the direct sum of these vectors
	\begin{displaymath}
		\tilde{f} = \left(\sum_{k=1}^n \tilde{f}_k \right) \in 
		\bigoplus_{k=1}^n \left(\Lambda^k \mathfrak{g}^\ast \otimes \Omega^{n-k}(M)\right)
	\end{displaymath}
	equation (\ref{eq:fk_hcmm_tilde}) can be recast as:
	\begin{equation}\label{eq:fk_hcmm_tilde_complex_1}
		\left[\delta_{\text{CE}}\otimes \text{id} + \text{id}\otimes d \right] \tilde f =
		\sum_{k=1}^{n+1} (-1)^{k-1} \iota^k_\mathfrak{g} \omega
	\end{equation}
	where $\iota^k_\mathfrak{g}$ is the operator defined above.
	Note that we are implicitly using the Koszul convention, therefore the action of $\text{id}\otimes d$ on a homogeneous element $f_k \in \Lambda^k \mathfrak{g}^*\otimes \Omega^\bullet(M)$ yields $(-1)^k (\text{id}\otimes d) f_k$.
	\\
	If we set
		\begin{displaymath}
			\tilde{\omega} = \sum_{k=1}^{n+1} (-1)^{k-1} \iota^k_\mathfrak{g} \omega \in C_\mathfrak{g}^{n+1},
		\end{displaymath}
	equation (\ref{eq:fk_hcmm_tilde_complex_1}) corresponds to
	\begin{equation}\label{eq:fk_hcmm_tilde_complex_2}
		d_{\text{tot}} \tilde{f} = \tilde{\omega}
	\end{equation}
	which is exactly the condition of $\tilde{f}$ being a primitive of $\tilde{\omega}$.
	\\
	It follows from Lemma \ref{lemma:multicartan} that $d_{tot}\tilde \omega=0$ for all actions preserving $\omega$, therefore the vanishing of the cohomology class 
	$[\tilde{\omega}] \in H^{n+1}(C_\mathfrak{g}^\bullet)$
	is a necessary and sufficient condition for the existence of a comoment for the infinitesimal action of $\mathfrak{g}$ on $M$.
\end{proof}
	
\begin{remark}\label{rk:kuenneth}
By the K\"{u}nneth theorem, the cohomology $H^\bullet(C_\mathfrak{g}^\bullet)$ is isomorphic to $H^{\geq 1}(\mathfrak g)\otimes H^\bullet(M)$, we will give a geometric interpretation to this fact in the next section.
\end{remark}	

\begin{remark}\label{rk:cp_obsruction}
It is customary in part of the literature (see for example \cite{Callies2016}, \cite{Miti2018}, \cite{Mammadova2019}) to consider, as a cohomological obstruction to the existence of a comoment for $v:\mathfrak{g}\to \mathfrak{X}(M,\omega)$, the following cocycle in the Chevalley-Eilenberg complex of $\mathfrak{g}$
\begin{center}
\begin{tikzcd}[column sep= small,row sep=0ex]
	c^{\mathfrak{g}}_{p}=(\iota^{n+1}_\mathfrak{g}\omega)\big\vert_p :
		& \Lambda^{n+1} \mathfrak{g} \arrow[r] & \mathbb{R}
	\\
		& x_{1} \wedge \dots \wedge x_{n+1} 	\ar[r, mapsto]
		& (\iota( v_{1} \wedge \dots \wedge v_{n+1})\omega)\big\vert_p
\end{tikzcd}		
\end{center}
where $p\in M$ is a fixed point in $M$,
Lemma \ref{lemma:multicartan} guarantees that $\delta_\text{CE} c^{\mathfrak{g}}_{p} = 0$.
Note that, when $M$ is connected, the cohomology class $[c_p^{\mathfrak{g}}] \in H^{n+1}(\mathfrak{g})$ is independent of the point $p$.
\\
The vanishing of $[\tilde{\omega}]$ implies in particular that
$(\iota^{n+1}_\mathfrak{g}\omega) \in C^{n+1}_\mathfrak{g}$ must be a boundary, hence the vanishing of $[c_p^{\mathfrak{g}}]$ is a necessary condition for the existence of a comoment.
\\
Moreover, it follows from Remark \ref{rk:kuenneth} that when $H^{i}_{\mathrm{dR}}(M) =0$ for $1 \leq i \leq n-1$ the vanishing of $[c_p^{\mathfrak{g}}]$ is also a sufficient condition for the existence of a comoment.
\end{remark}

\subsection{A geometric interpretation of the obstruction class}\label{subsec:geomint}
In symplectic geometry  the existence of a comoment implies that $\omega$ can be extended to a cocycle in equivariant de Rham cohomology  (\cite{MR721448}). Following \cite{Callies2016}, we illustrate this fact by giving a geometric interpretation to the obstruction class $[\tilde \omega]$ defined above and explain its analogue in the multisymplectic setting.\\

When the Lie algebra action $v$ comes from a Lie group action, we can interpret the complex $C^\bullet_\mathfrak g$ and the cocycle $\tilde \omega$ in terms of the de Rham cohomology of invariant forms.
\begin{defi}
Let $\vartheta:G\times M\to M$ be a Lie group action. We denote by $\Omega^\bullet(M,\vartheta)$ the subcomplex of $\vartheta$-invariant differential forms. The cohomology of this complex is called \emph{invariant de Rham cohomology} and denoted by $H^\bullet(M,\vartheta)$.
\end{defi}
\begin{remark}
It is more common in the literature to denote these invariant spaces by $\Omega^\bullet(M)^G$ and $H^\bullet(M)^G$. We use the above notation to emphasize the specific Lie group action involved.
\end{remark}
\begin{remark}\label{comactintegration}
The invariant cohomology is not the same as the equivariant cohomology, which we will define later. For example, whenever $G$ is compact, the natural map $H^\bullet(M,\vartheta)\to H^\bullet(M)$ induced by the inclusion of the subcomplex is an isomorphism, as in this case any form can be made invariant by averaging. Pullbacks along equivariant maps lead to homomorphisms of the invariant cohomology groups. For details we refer to \cite{MR0336651}.
\end{remark}
\begin{lemma}[Lemma 6.3 in \cite{Callies2016}]\label{old_infmom}
Let $\vartheta:G\times M\to M$ be a right Lie group action. 
We denote by $(r\times id):G\times (G\times M)\to (G\times M), (h,(g,m))\mapsto (gh,m) $ the right multiplication action on the second factor. The complex ~$\Omega^\bullet(G\times M,{r\times id})$ is naturally isomorphic to
 ~$C_\mathfrak g^\bullet\oplus( \Lambda^0\mathfrak g^*\otimes \Omega^\bullet(M))$. 
\end{lemma}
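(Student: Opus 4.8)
The plan is to realise both complexes concretely as $\Lambda^\bullet\mathfrak g^*\otimes\Omega^\bullet(M)$ and then to verify that the de Rham differential, restricted to invariant forms, becomes the total differential $d_{\mathrm{tot}}$. The starting observation is that the product structure gives a canonical splitting $T(G\times M)=\mathrm{pr}_G^*TG\oplus\mathrm{pr}_M^*TM$, so every form on $G\times M$ has a well-defined bidegree decomposition. To make this explicit I would fix a basis $\{e_i\}$ of $\mathfrak g$, take the dual coframe $\{\theta^i\}\subset\Omega^1(G)$ of \emph{right}-invariant $1$-forms, and pull them back to $\tilde\theta^i=\mathrm{pr}_G^*\theta^i$. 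Since the $\tilde\theta^i$ restrict at each point to a basis of the $G$-cotangent directions, every $\alpha\in\Omega^k(G\times M)$ can be written uniquely as $\alpha=\sum_I\tilde\theta^I\wedge\alpha_I$, where $I$ runs over increasing multi-indices, $\tilde\theta^I=\tilde\theta^{i_1}\wedge\dots\wedge\tilde\theta^{i_p}$, and each coefficient $\alpha_I$ has no $G$-cotangent components, i.e. is a smooth family $g\mapsto\alpha_I^g\in\Omega^{k-|I|}(M)$.

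Next I would impose $(r\times\mathrm{id})$-invariance. Because each $\theta^i$ is right-invariant and the action is trivial on $M$, the forms $\tilde\theta^I$ are themselves $(r\times\mathrm{id})$-invariant, so $(r_h\times\mathrm{id})^*\alpha=\sum_I\tilde\theta^I\wedge(r_h\times\mathrm{id})^*\alpha_I$. Since $\alpha_I$ only sees $M$-directions and the action is the identity there, one computes that $(r_h\times\mathrm{id})^*\alpha_I$ corresponds to the reparametrised family $g\mapsto\alpha_I^{gh}$. Invariance for all $h$ therefore forces $\alpha_I^{gh}=\alpha_I^g$, and setting $g=e$ shows that each $\alpha_I^g$ is independent of $g$, equal to a fixed $\beta_I\in\Omega^\bullet(M)$. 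Thus the invariant forms are exactly the $\sum_I\tilde\theta^I\wedge\mathrm{pr}_M^*\beta_I$, and sending such a form to $\sum_I\theta^I\otimes\beta_I$ under the standard identification of right-invariant forms on $G$ with $\Lambda^\bullet\mathfrak g^*$ yields a graded-vector-space isomorphism $\Omega^\bullet(G\times M,r\times\mathrm{id})\cong\Lambda^\bullet\mathfrak g^*\otimes\Omega^\bullet(M)$. Splitting off the $\Lambda^0\mathfrak g^*$-summand identifies the target with $C_\mathfrak g^\bullet\oplus(\Lambda^0\mathfrak g^*\otimes\Omega^\bullet(M))$; this is in fact a splitting of complexes, since $\delta_{\mathrm{CE}}$ vanishes on $\Lambda^0\mathfrak g^*$ (dually, $\partial$ vanishes on $\Lambda^1\mathfrak g$ by \eqref{eq:CE_boun}).

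The step I expect to carry the actual content is checking that this isomorphism is a \emph{chain} map. Here I would use that on invariant forms the de Rham differential on the product splits into its $G$-part and $M$-part: for $|I|=p$, Leibniz gives $d(\tilde\theta^I\wedge\mathrm{pr}_M^*\beta_I)=d\tilde\theta^I\wedge\mathrm{pr}_M^*\beta_I+(-1)^p\tilde\theta^I\wedge\mathrm{pr}_M^*d\beta_I$. The first term is governed by the Maurer–Cartan structure equations for the right-invariant coframe, which under the identification $\tilde\theta^I\leftrightarrow\theta^I$ reproduce exactly the Chevalley–Eilenberg differential $\delta_{\mathrm{CE}}$ on $\Lambda^\bullet\mathfrak g^*$; the second term is the de Rham differential on $M$ carrying the Koszul sign $(-1)^p$. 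This matches $d_{\mathrm{tot}}=\delta_{\mathrm{CE}}\otimes\mathrm{id}+(-1)^p\,\mathrm{id}\otimes d$ on $\Lambda^p\mathfrak g^*\otimes\Omega^\bullet(M)$. The main subtlety to handle carefully is precisely this sign bookkeeping, together with confirming that the bracket of right-invariant vector fields reproduces $\delta_{\mathrm{CE}}$ for the orientation of $\partial$ fixed in \eqref{eq:CE_boun} rather than its negative; once the structure equations are matched to \eqref{eq:CE_boun}, the map is a chain isomorphism and the lemma follows.
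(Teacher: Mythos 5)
Your route is in essence the paper's own, carried out in more detail: the paper defines the comparison map by right-invariant extension $\Lambda^k\mathfrak g^*\to\Omega^k(G,r)$ followed by the wedge map \eqref{eq:kunnetqiso}, with inverse given by restriction to $\{e\}\times M$, and then appeals to the K\"unneth map; your coframe decomposition $\alpha=\sum_I\tilde\theta^I\wedge\alpha_I$ together with the observation that $(r\times\mathrm{id})$-invariance forces the coefficients $\alpha_I^g$ to be constant in $g$ is a welcome, self-contained verification that this identification is bijective onto the invariant subcomplex --- a point the paper compresses into the bare assertion that the restriction map is an inverse.

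However, the sign you flagged and deferred resolves \emph{against} your stated expectation, so the chain-map step fails as written. With the paper's conventions ($\mathfrak g$ realized by left-invariant vector fields, so that the infinitesimal right action is a Lie algebra homomorphism, and $\delta_{\mathrm{CE}}$ as in Definition \ref{def:comomentbicomplex}), the bracket of right-invariant vector fields is $[\xi^R,\eta^R]=-[\xi,\eta]^R$; hence the right-invariant coframe satisfies $d\theta^i=+\tfrac12 c^i_{jk}\,\theta^j\wedge\theta^k$, i.e.\ on $\Omega^\bullet(G,r)$ the de Rham differential corresponds to $-\delta_{\mathrm{CE}}$, not to $\delta_{\mathrm{CE}}$. (Check on $f\in\mathfrak g^*$: $df^R(\xi^R,\eta^R)=-f^R([\xi^R,\eta^R])=+f([\xi,\eta])$, whereas the paper's convention gives $\delta_{\mathrm{CE}}f(\xi,\eta)=-f([\xi,\eta])$.) Consequently your unsigned map $\sum_I\tilde\theta^I\wedge\mathrm{pr}_M^*\beta_I\mapsto\sum_I\theta^I\otimes\beta_I$ intertwines $d$ with $-\delta_{\mathrm{CE}}\otimes\mathrm{id}+(-1)^p\,\mathrm{id}\otimes d$ rather than with $d_{\mathrm{tot}}$, and no amount of ``matching the structure equations to \eqref{eq:CE_boun}'' can change this, since the mismatch is forced by the conventions. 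The repair is to twist the identification by the degree sign: conjugating by $\theta^I\otimes\beta\mapsto(-1)^{|I|}\,\theta^I\otimes\beta$ turns $-\delta_{\mathrm{CE}}$ into $+\delta_{\mathrm{CE}}$ while leaving the term $(-1)^p\,\mathrm{id}\otimes d$ untouched; this is precisely the sign $(-1)^{|\alpha|}$ that the paper builds into the map \eqref{eq:kunnetqiso}. With that sign inserted into your isomorphism, the rest of your argument goes through and the lemma follows.
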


\begin{proof}  We have a natural map 
	\begin{equation}\label{eq:naturalmap}
	 \Lambda^\bullet\mathfrak g^* \otimes \Omega^\bullet(M)\to\Omega^\bullet(G,r) \otimes \Omega^\bullet(M)\to \Omega^\bullet(G\times M, {r\times id}).
	\end{equation}
	The first arrow comes from the isomorphism $\Lambda^k \mathfrak{g}^\ast \to \Omega^k(G,r)$, which associates to any element of $\Lambda^k\mathfrak g^*=\Lambda^kT^*_eG$ its right-invariant extension.
	The second arrow comes from the exterior wedge product i.e. from the map
	\begin{displaymath}
		\begin{tikzcd}[column sep= small,row sep=0ex]
				\Omega^q(G,r) \otimes \Omega^p(M) 	\arrow[r]
				& \Omega^{q+p}(G\times M, {r\times id})
				\\
				\alpha\otimes\beta 	\ar[r, mapsto]
				&	\pi_1^*\alpha \wedge \pi_2^* \beta
		\end{tikzcd}		
	\end{displaymath}
	where $\pi_i$ are the projections on the $i$-th factor of $G\times M$.
	Regarding the complexes involved as graded vector spaces, the previous map can be extended to a degree 0, bilinear map
	\begin{equation}\label{eq:kunnetqiso}
		\begin{tikzcd}[column sep= small,row sep=0ex]
				\Omega^\bullet(G,r) \otimes \Omega^\bullet(M) 	\arrow[r]
				& \Omega^{\bullet}(G\times M, {r\times id})
				\\
				\alpha\otimes\beta 	\ar[r, mapsto]
				&	(-1)^{|\alpha|}\pi_1^*\alpha \wedge \pi_2^* \beta
		\end{tikzcd}		
	\end{equation}
	where the extra signs comes from the Koszul convention we employed in definition \ref{def:comomentbicomplex} when defining the differential in the total complex.
	The map \eqref{eq:naturalmap} admits an inverse which takes $\alpha\in \Omega^\bullet(G\times M, r\times id)$ to $\alpha|_{\{e\}\times M}\in \Gamma(\Lambda^\bullet \mathfrak g^*\otimes T^*M)=\Lambda^\bullet\mathfrak g^*\otimes\Omega^\bullet(M)$.
	\\
	The statement follows from the observation that $C_\mathfrak g^\bullet\oplus( \Lambda^0\mathfrak g^*\otimes \Omega^\bullet(M))$ is the total complex of $\Lambda^\bullet\mathfrak g^*\otimes \Omega^\bullet(M)$ and that the second arrow defined above is precisely the function inducing the K\"unneth isomorphism \cite{MR658304}.
\end{proof}

\begin{prop}[Proposition 6.4 in \cite{Callies2016}]\label{infmom}
Assume that $G$ preserves a pre-mul\-ti\-sym\-plec\-tic form $\omega$. 
Let $v$ be the infinitesimal action induced by $G$.
Then the cocycle $\tilde \omega \in C_\mathfrak g^{n+1}=\Omega^\bullet(G\times M, {r\times id})$ with respect to the infinitesimal action $v$ of $\mathfrak{g}$ induced by $\vartheta$, is given by $\tilde{\omega}=\vartheta^*\omega-\pi^*\omega$, where $\pi:G\times M\to M$ is the projection onto the second factor.
\end{prop}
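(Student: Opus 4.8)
The plan is to transport $\vartheta^*\omega-\pi^*\omega$ through the isomorphism $\Omega^\bullet(G\times M, r\times id)\cong C_\mathfrak g^\bullet\oplus(\Lambda^0\mathfrak g^*\otimes\Omega^\bullet(M))$ of Lemma \ref{old_infmom} and to check that it lands on $\tilde\omega$. First I would verify that $\vartheta^*\omega-\pi^*\omega$ really belongs to the invariant subcomplex $\Omega^\bullet(G\times M, r\times id)$. Writing $\Phi_h(g,m)=(gh,m)$ for the $r\times id$-action and $\vartheta_h=\vartheta(h,\cdot)$, the action axiom gives $\vartheta\circ\Phi_h=\vartheta_h\circ\vartheta$, so $\Phi_h^*\vartheta^*\omega=\vartheta^*\vartheta_h^*\omega=\vartheta^*\omega$ because $\omega$ is $G$-invariant; and $\pi\circ\Phi_h=\pi$ gives $\Phi_h^*\pi^*\omega=\pi^*\omega$. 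Since $\omega$ is closed, both pullbacks are closed, so $\vartheta^*\omega-\pi^*\omega$ is an invariant $(n+1)$-cocycle and it remains only to compute its components under the (inverse) Künneth identification, i.e.\ by restricting to $\{e\}\times M$ and decomposing along $T^*_{(e,m)}(G\times M)=\mathfrak g^*\oplus T^*_mM$.

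The computation hinges on the differential of $\vartheta$ at $(e,m)$. As $\vartheta(e,\cdot)=\mathrm{id}_M$ and $\tfrac{d}{dt}\big|_0\vartheta(\exp(t\xi),m)=v_\xi(m)$, the first-order behaviour — which is all the pointwise pullback sees — is $T_{(e,m)}\vartheta(\xi,u)=v_\xi(m)+u$, while $T_{(e,m)}\pi(\xi,u)=u$. To isolate the bidegree-$(k,n{+}1{-}k)$ part I would evaluate on $k$ vectors $(\xi_i,0)$ and $n{+}1{-}k$ vectors $(0,u_j)$. For $k=0$ the two pullbacks coincide and cancel, so there is no $\Lambda^0\mathfrak g^*$-component (consistent with $\tilde\omega\in C_\mathfrak g^{n+1}$); for $k\geq 1$ the contribution of $\pi^*\omega$ vanishes, since $T\pi$ annihilates each $(\xi_i,0)$, and one is left with
\[
(\vartheta^*\omega-\pi^*\omega)_{(e,m)}\big((\xi_1,0),\dots,(\xi_k,0),(0,u_1),\dots,(0,u_{n+1-k})\big)=\omega_m\big(v_{\xi_1},\dots,v_{\xi_k},u_1,\dots,u_{n+1-k}\big).
\]
By the contraction convention $\iota(\xi_1\wedge\cdots\wedge\xi_k)=\iota_{\xi_k}\cdots\iota_{\xi_1}$ this is exactly $\big(\iota^k_\mathfrak g\omega\big)(\xi_1\wedge\cdots\wedge\xi_k)$ evaluated on $(u_1,\dots,u_{n+1-k})$, so the bidegree-$(k,n{+}1{-}k)$ component of the restriction is $\iota^k_\mathfrak g\omega$.

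Finally I would reassemble these components through \eqref{eq:kunnetqiso}. This identification carries the Koszul twist $(-1)^{|\alpha|}$ on the $\Lambda^k\mathfrak g^*$-summand, and accounting for it converts the bidegree components computed above into the alternating sum $\sum_{k=1}^{n+1}(-1)^{k-1}\iota^k_\mathfrak g\omega$, which is precisely $\tilde\omega$ as defined in \eqref{eq:omegatildeobstruction}.

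The invariance and closedness of $\vartheta^*\omega-\pi^*\omega$, together with the pointwise formula $T_{(e,m)}\vartheta(\xi,u)=v_\xi(m)+u$, are straightforward; the main obstacle is the sign bookkeeping. One must reconcile three sources of signs — the Koszul twist built into the Künneth identification \eqref{eq:kunnetqiso}, the ordering convention in the contraction $\iota(\xi_1\wedge\cdots\wedge\xi_k)$, and the alternating factor $(-1)^{k-1}$ in the definition of $\tilde\omega$ — so that the bidegree components align on the nose rather than merely up to an overall sign.
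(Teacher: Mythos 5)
Your proposal is correct and follows essentially the same route as the paper: restrict the invariant form $\vartheta^*\omega-\pi^*\omega$ to $\{e\}\times M$, use $T_{(e,m)}\vartheta(\xi,u)=v_\xi(m)+u$ to identify the bidegree-$(k,n{+}1{-}k)$ components with $\iota^k_\mathfrak g\omega$, and match signs through the Koszul-twisted K\"unneth map \eqref{eq:kunnetqiso}. The only (cosmetic) differences are that you verify $(r\times id)$-invariance by direct computation with $\Phi_h$ (neatly explaining why $\pi^*\omega$ is invariant even though $\pi$ is not equivariant, a point the paper only remarks on) and you cancel the $k=0$ component of the difference directly, where the paper computes $\vartheta^*\omega$ alone and absorbs the $i=0$ term into $1\otimes\omega=\pi^*\omega$.
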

\begin{proof}
	Being an action, the map $\vartheta\colon G \times M \to M$ is equivariant (with respect to $r\times id$ in the domain and $\vartheta$ in the target). Hence, the cochain-map $\vartheta^*:\Omega^\bullet(M)\to \Omega^\bullet(G\times M)$ 
	restricts to a well-defined map on the invariant subcomplexes 
	$\vartheta^*:\Omega^\bullet(M,{\vartheta})\to \Omega^\bullet(G\times M,{r \times id}) $,
	and in particular we have a well-defined map in cohomology.
	\\
	Let $X_1,...,X_{n+1-i}\in T_pM$ and $\xi_1,...,\xi_{i}\in T_eG=\mathfrak g$.
	For all $ 1<i\leq n+1 $ we get
	\begin{align*}
		\vartheta^*\omega(\xi_1,...,\xi_{i},X_1,...,X_{n+1-i}) & = \omega(\vartheta_*\xi_1,...,\vartheta_*\xi_{i},\vartheta_*X_1,...,\vartheta_*X_{n+1-i}) =\\
		&=\omega(v(\xi_1),...,v(\xi_{i}),X_1,...,X_{n+1-i})=\\
		&= (\iota^i_\mathfrak g\omega)(\xi_1,...,\xi_{i})(X_1,...,X_{n+1-i})
	\end{align*} 
	and for $i=0$ we get
	\begin{displaymath}
		\vartheta^*\omega(X_1,...,X_{n+1} )=
		\omega(X_1,...,X_{n+1}).
	\end{displaymath}		

This means $\vartheta^*\omega= -\tilde\omega + 1\otimes \omega$, where the first summand is the image of $\tilde{\omega}$ under the map defined in equation \eqref{eq:kunnetqiso} and the last summand comes from the case $i=0$.  
The observation that $1\otimes \omega=\pi^*\omega$, for $\pi:G\times M\to M$ the natural projection, finishes the proof.  One should note, that this is true although $\pi$ is not an equivariant map with respect to $( r\times id, \vartheta)$.
\end{proof}

\begin{remark}We think that the above proposition is central to the understanding of multisymplectic comoments, as it enables an elementary and unified treatment of many phenomena in multisymplectic geometry.
\begin{itemize}
\item For symplectic manifolds, this result gives a nice interpretation for the result of \cite{MR721448} that moment maps are in correspondence to equivariant extensions of $\omega$ and also explains why this correspondence fails in the general multisymplectic setting (cf. Example \ref{exnongen}). 
\item Let $G_i$ act on the multisymplectic manifolds $(M_i,\omega_i)$ for $i\in\{1,2\}$. If there exist comoments for $G_1$ and $G_2$, then there exists a comoment for $G_1\times G_2$ on the multisymplectic manifold $(M_1\times M_2, \pi_1^*\omega_1\wedge \pi_2^*\omega_2)$. This theorem from \cite{Shahbazi2016} can be derived from Proposition \ref{infmom}.
\item A comoment exists, whenever the multisymplectic form $\omega$ can be lifted to a class in the equivariant cohomology $H_G^{n+1}(M)$.
(See Theorem \ref{cpteqcom}). 
\end{itemize} 
\end{remark}

\begin{remark}\label{rk:invpot}
Especially, Proposition \ref{infmom} immediately implies that a $\vartheta$-invariant potential of $\omega$ induces a comoment, 
as an invariant potential $\alpha$ of $\omega$ would be mapped to a potential $(\vartheta^*\alpha-\pi^*\alpha) \in \Omega^\bullet( G\times M , r\times id)$ of $\tilde{\omega}=\vartheta^*\omega-\pi^*\omega$.
Note that $\omega$ being exact is not a sufficient condition, 
because a primitive $\vartheta^*\alpha-\pi^*\alpha$ need not to be an element in the invariant cochain complex $\Omega^\bullet(G\times M, r\times id)$ in general. \\ 
\end{remark}

When such an invariant potential exists, it is fairly easy to give an explicit expression for the components of a comoment, as illustrated by the following Lemma:
\begin{lemma}[Section 8.1 in \cite{Callies2016}]\label{lem:extexact}
	Let $M$ be a manifold with a $G$-action, 
	let $\alpha\in \Omega^n(M,G)$ be a $G$-invariant $n$-form and consider the pre-$n$-plectic form $\omega=d\alpha$ on $M$.\\
	The action $G\action \left(M,d\alpha\right)$ admits a $G$-equivariant comoment map $(f):\mathfrak{g} \to L_{\infty}(M,\omega)$, given by $(k=1,\dots,n)$:
\begin{align*}
	f_{k} \colon \Lambda^k\mathfrak{g} &\to \Omega^{n-k}(M)\\
	q&\mapsto (-1)^{k-1}\varsigma(k)\iota(v_q)(\alpha) .
\end{align*} 
\end{lemma}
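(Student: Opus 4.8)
The plan is to verify directly that the proposed maps $f_k$ satisfy the defining comoment equations \eqref{eq:fk_hcmm} together with the boundary conditions $f_0=f_{n+1}=0$, and then to check the equivariance relation separately. Existence of \emph{a} comoment is in fact already guaranteed by Remark \ref{rk:invpot}: the invariant potential $\alpha$ produces the invariant primitive $\vartheta^*\alpha-\pi^*\alpha$ of $\tilde\omega=\vartheta^*\omega-\pi^*\omega$, which under the isomorphism of Lemma \ref{old_infmom} corresponds to a primitive of $\tilde\omega$ in $C^\bullet_\mathfrak{g}$. Thus the only genuine content is that the components of this primitive are the stated contractions, and the cleanest way to see this is to plug the formula into \eqref{eq:fk_hcmm}. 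First I would note that $f_{n+1}=0$ is automatic, since $\iota(v_p)\alpha$ vanishes for $p\in\Lambda^{n+1}\mathfrak{g}$ (an $n$-form contracted with an $(n+1)$-vector), while $f_0=0$ holds by convention.

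Next I would reduce the whole system to a single identity. Substituting $f_k(p)=(-1)^{k-1}\varsigma(k)\iota(v_p)\alpha$ and $\omega=d\alpha$ into \eqref{eq:fk_hcmm}, and using $\varsigma(k-1)\varsigma(k)=(-1)^k$ together with $\varsigma(k)^2=1$ to clear the Koszul signs, equation \eqref{eq:fk_hcmm} for $p\in\Lambda^k\mathfrak{g}$ becomes equivalent to
\begin{equation*}
(-1)^{k-1}\, d\,\iota(v_p)\,\alpha + \iota(v_p)\, d\alpha + \iota(v_{\partial p})\,\alpha = 0 ,
\end{equation*}
where $v_{\partial p}$ is the multivector field associated to the Chevalley--Eilenberg boundary $\partial p\in\Lambda^{k-1}\mathfrak{g}$; here one uses that $v$ is a Lie algebra homomorphism, so that $v_{[\xi,\eta]}=[v_\xi,v_\eta]$. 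This one identity, for all $k$, says precisely that contracting the closed invariant form $\alpha$ with $v$ is compatible with $\partial$ and $d$.

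I would then establish this identity from the Cartan calculus on $M$ and, crucially, from the invariance hypothesis $\mathcal{L}_{v_\xi}\alpha=0$. For $k=1$ it is just $d\iota_{v_\xi}\alpha+\iota_{v_\xi}d\alpha=\mathcal{L}_{v_\xi}\alpha=0$; for $k=2$ one expands $\iota_{v_\eta}\iota_{v_\xi}d\alpha$ using $\iota_Xd=\mathcal{L}_X-d\iota_X$ and $[\mathcal{L}_X,\iota_Y]=\iota_{[X,Y]}$, whereupon the Lie-derivative terms drop out by invariance and leave exactly the bracket contraction $\iota(v_{\partial p})\alpha$. The general case runs by the same mechanism, packaged in the multivector Cartan identity of Lemma \ref{lemma:multicartan}: commuting $d$ through $\iota(v_{\xi_1\wedge\dots\wedge\xi_k})$ produces a sum of terms in which one factor is acted on by some $\mathcal{L}_{v_{\xi_i}}$ (all vanishing by invariance) together with a sum of terms in which two factors are replaced by a bracket $[v_{\xi_i},v_{\xi_j}]=v_{[\xi_i,\xi_j]}$, and these bracket terms reassemble, via \eqref{eq:CE_boun}, into $\iota(v_{\partial p})\alpha$.

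For $G$-equivariance the argument is entirely analogous: after substituting the formula, the relation to be checked reduces to $\mathcal{L}_{v_\xi}\,\iota(v_{\mathbf{b}})\alpha=\iota(v_{[\xi,\mathbf{b}]})\alpha$, which follows from $\mathcal{L}_{v_\xi}\alpha=0$ and the commutator rule for $[\mathcal{L}_{v_\xi},\iota(v_{\mathbf{b}})]$ matched against the adjoint action \eqref{eq:adjointactionwedge}. The main obstacle throughout is purely sign and ordering bookkeeping: because the convention $\iota(\xi_1\wedge\dots\wedge\xi_k)=\iota_{\xi_k}\cdots\iota_{\xi_1}$ reverses the order, one must track the permutation signs when pushing $d$ and the $\mathcal{L}_{v_{\xi_i}}$ through the iterated contraction and confirm that they line up with the $(-1)^{i+j}$ of the Chevalley--Eilenberg boundary. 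Once the multivector Cartan identity is fixed in the correct sign convention, the verification of both \eqref{eq:fk_hcmm} and the equivariance relation is immediate.
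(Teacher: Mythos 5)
Your proposal is correct and takes essentially the same route as the paper: a direct verification of equation \eqref{eq:fk_hcmm} via the multivector Cartan formula of Lemma \ref{lemma:multicartan}, with the invariance hypothesis $\mathcal{L}_{v_\xi}\alpha=0$ cancelling the Lie-derivative terms, and $G$-equivariance deduced from the commutator identity \eqref{eq:multiliecartan}. Your sign bookkeeping checks out: the reduction of \eqref{eq:fk_hcmm} to the single identity $(-1)^{k-1}\,d\,\iota(v_p)\alpha+\iota(v_p)\,d\alpha+\iota(v_{\partial p})\alpha=0$ is exactly what the paper's displayed computation establishes (using $\varsigma(k-1)\varsigma(k)=(-1)^k$ and $v_{\partial p}=\partial v_p$ since $v$ is a Lie algebra homomorphism).
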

\begin{proof}
	A direct proof of this statement can be given by showing that equation \eqref{eq:fk_hcmm} is satisfied.
	Upon employing Lemma \ref{lemma:multicartan}, we have:
	\begin{displaymath}
	\begin{split}
		\textrm{d} f_m (p) &= (-1)^{m-1} \varsigma(m) \textrm{d} \iota_{v_p} \alpha =
		-\varsigma(m) (-1)^{m} \textrm{d} \iota(v_1\wedge\dots\wedge v_m) \alpha =\\
		&= -\varsigma(m) \left(\iota_{v_p} \textrm{d}\alpha + \iota_{\partial v_p} \alpha +
		\sum_{k=1}^{m} (-1)^k  \iota( x_1\wedge\, \hat{x_k}\, \wedge\dots\wedge x_m) \cancel{\mathcal{L}_{x_k} \alpha}
		 \right) =\\
		&= -\varsigma(m) \iota_{v_p} \omega +  (-1)^{m-1} \varsigma(m-1) \iota_{\partial v_p} \alpha  =
		-\varsigma(m) \iota_{v_p} \omega - f_{m-1}(\partial v_p) ,
	\end{split}
	\end{displaymath}	
	thus $G$-equivariance follows from Equation \ref{eq:multiliecartan}.
\end{proof}
\begin{cor}[$SO(n)$-action on $\mathbb{R}^n$, Example 8.4 in \cite{Callies2016}]\label{cor:sorn}
	The canonical action 
	$$SO(n) \action \left( \mathbb{R}^{n}, dx^{1\dots n}\right),$$
	where~$x = (x^i)$ are the standard coordinates on~$\mathbb{R}^n$ 
and~$dx^{1\dots n} = d x^1\wedge\dots \wedge dx^{n}$ is the standard volume form of $\mathbb{R}^n$, admits a comoment given by $(k=1,\dots,n)$:
\begin{align*}
	f_{k} \colon \Lambda^k\mathfrak{g} &\to \Omega^{n-1-k}(M)\\
	q&\mapsto (-1)^{k-1} \frac{\varsigma(k)}{n} \iota(E \wedge v_q)\left(dx^{1\dots n}\right)
\end{align*}
 	where $E =\sum_i x^i\partial_i$ is the Euler vector field.
\end{cor}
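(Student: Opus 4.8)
The plan is to recognize this statement as a direct instance of Lemma \ref{lem:extexact}, the only work being to exhibit an $SO(n)$-invariant primitive of the volume form. Although $\mathrm{d}x^{1\dots n}$ is a top-degree form on $\mathbb{R}^n$ and therefore automatically closed, it is also exact; the crucial point is to produce a primitive that is moreover \emph{invariant} under the action, so that the explicit formulas of Lemma \ref{lem:extexact} apply verbatim. I would therefore treat $\omega := \mathrm{d}x^{1\dots n}$ as an $(n{-}1)$-plectic form and search for an invariant $\alpha \in \Omega^{n-1}(\mathbb{R}^n)$ with $\mathrm{d}\alpha = \omega$.

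First I would contract the volume form with the Euler vector field and set $\alpha := \tfrac{1}{n}\,\iota_E(\mathrm{d}x^{1\dots n}) \in \Omega^{n-1}(\mathbb{R}^n)$. Since $\mathrm{d}x^{1\dots n}$ is closed, Cartan's formula $\mathcal{L}_E = \mathrm{d}\iota_E + \iota_E \mathrm{d}$ gives $\mathrm{d}\alpha = \tfrac{1}{n}\mathcal{L}_E(\mathrm{d}x^{1\dots n})$; and because the flow of $E$ scales each coordinate, $\mathcal{L}_E(\mathrm{d}x^{1\dots n}) = n\,\mathrm{d}x^{1\dots n}$, whence $\mathrm{d}\alpha = \mathrm{d}x^{1\dots n} = \omega$. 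Next I would verify $SO(n)$-invariance of $\alpha$: the Euler field $E$ is invariant under every linear automorphism of $\mathbb{R}^n$ (in particular under $SO(n)$), and $\mathrm{d}x^{1\dots n}$ is preserved by orientation-preserving isometries, so the contraction $\iota_E(\mathrm{d}x^{1\dots n})$, and hence $\alpha$, lies in $\Omega^{n-1}(\mathbb{R}^n, SO(n))$.

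With this invariant potential in hand, Lemma \ref{lem:extexact}, applied with plectic degree $n-1$, immediately produces a $G$-equivariant comoment whose components are $f_k(q) = (-1)^{k-1}\varsigma(k)\,\iota(v_q)\,\alpha = (-1)^{k-1}\tfrac{\varsigma(k)}{n}\,\iota(v_q)\bigl(\iota_E(\mathrm{d}x^{1\dots n})\bigr)$ and which automatically land in the correct degrees $\Omega^{n-1-k}(M)$. The last step is a short bookkeeping with the contraction convention $\iota(\xi_1\wedge\dots\wedge\xi_k)=\iota_{\xi_k}\cdots\iota_{\xi_1}$ fixed in the Conventions: for decomposable $q = x_1\wedge\dots\wedge x_k$ one has $\iota(v_q)\circ\iota_E = \iota_{v_{x_k}}\cdots\iota_{v_{x_1}}\iota_E = \iota(E\wedge v_q)$, so that $\iota(v_q)\bigl(\iota_E\,\mathrm{d}x^{1\dots n}\bigr) = \iota(E\wedge v_q)(\mathrm{d}x^{1\dots n})$, which rewrites the formula into exactly the asserted one.

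The computation is routine once the invariant primitive is identified, so there is no genuine analytical obstacle here; the only substantive step is the recognition that $\tfrac{1}{n}\iota_E(\mathrm{d}x^{1\dots n})$ is the right invariant potential, together with the elementary scaling identity $\mathcal{L}_E(\mathrm{d}x^{1\dots n}) = n\,\mathrm{d}x^{1\dots n}$. The one place demanding care is the final sign-and-ordering reconciliation of the contraction convention, where $\iota_E$ must be moved into the innermost slot to match the expression $\iota(E\wedge v_q)$.
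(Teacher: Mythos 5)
Your proposal is correct and follows exactly the paper's own argument: the paper's proof is the one-line observation that $\alpha = \tfrac{1}{n}\iota_E\left(dx^{1\dots n}\right)$ is an $SO(n)$-invariant primitive of the volume form, after which Lemma \ref{lem:extexact} yields the stated comoment. You merely spell out the details the paper leaves implicit (the scaling identity $\mathcal{L}_E\left(dx^{1\dots n}\right) = n\,dx^{1\dots n}$, the invariance check, and the contraction-ordering bookkeeping), all of which are correct.
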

\begin{proof}
    The proof follows from Lemma \ref{lem:extexact} noting that the standard volume form admits the $G=SO(n)$ invariant form
    $$\alpha = \dfrac{\iota_E\left(dx^{1\dots n}\right)}{n}$$
    as a primitive.
\end{proof}

We will be primarily interested in the case of compact Lie groups. In this case,  we do not have to care about the invariance of forms:
\begin{cor}\label{cor:core}
Let $\vartheta:G\times M\to M$ be a compact Lie group acting on a pre-multisymplectic manifold, preserving the pre-multisymplectic form $\omega$. 
A comoment exists if and only if $[\vartheta^*\omega-\pi^*\omega]=0\in H^{n+1}(G\times M)$. 
\end{cor}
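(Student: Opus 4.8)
The plan is to translate the abstract obstruction $[\tilde\omega]=0$ furnished by the Frégier–Zambon theorem into a statement about the ordinary de Rham cohomology of $G\times M$, invoking compactness only at the very last step. By that theorem a comoment exists if and only if the cocycle $\tilde\omega$ of \eqref{eq:omegatildeobstruction} is a coboundary, i.e. $[\tilde\omega]=0\in H^{n+1}(C_\mathfrak g^\bullet)$; the whole task is to reinterpret this condition geometrically.

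First I would invoke Lemma \ref{old_infmom}, which identifies the total complex underlying $C_\mathfrak g^\bullet\oplus(\Lambda^0\mathfrak g^*\otimes\Omega^\bullet(M))$ with the invariant complex $\Omega^\bullet(G\times M,r\times id)$ via the map \eqref{eq:kunnetqiso}. Under this identification Proposition \ref{infmom} expresses the cocycle as $\tilde\omega=\vartheta^*\omega-\pi^*\omega$, up to an overall sign that is irrelevant for the vanishing of a cohomology class. I would then record that $\vartheta^*\omega-\pi^*\omega$ is genuinely $(r\times id)$-invariant: since $\omega$ is $\vartheta$-invariant and $\vartheta$ is equivariant, $\vartheta^*\omega$ is invariant, while $\pi^*\omega$ does not depend on the $G$-coordinate and is invariant for trivial reasons. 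As $\tilde\omega$ has no component in the $k=0$ summand $\Lambda^0\mathfrak g^*\otimes\Omega^\bullet(M)$, and the decomposition of Lemma \ref{old_infmom} is one of complexes, the class $[\vartheta^*\omega-\pi^*\omega]$ vanishes in $H^{n+1}(G\times M,r\times id)$ if and only if $[\tilde\omega]=0$ in $H^{n+1}(C_\mathfrak g^\bullet)$. Combined with the first step, this shows that a comoment exists if and only if $[\vartheta^*\omega-\pi^*\omega]=0$ in the invariant cohomology $H^{n+1}(G\times M,r\times id)$.

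Finally I would pass from invariant to ordinary cohomology. The action $r\times id$ is an action of the compact group $G$ on $G\times M$, so by Remark \ref{comactintegration} the inclusion of invariant forms induces an isomorphism $H^\bullet(G\times M,r\times id)\xrightarrow{\sim}H^\bullet(G\times M)$, under which the invariant class $[\vartheta^*\omega-\pi^*\omega]$ is sent to the de Rham class of the same form. Hence $[\vartheta^*\omega-\pi^*\omega]=0$ in invariant cohomology if and only if it vanishes in $H^{n+1}(G\times M)$, which is the claim.

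I expect the only genuinely delicate point to be this last equivalence: a priori a de Rham primitive of $\vartheta^*\omega-\pi^*\omega$ on $G\times M$ need not be $(r\times id)$-invariant, so vanishing in $H^{n+1}(G\times M)$ does not obviously force vanishing in the invariant cohomology. This is precisely where compactness enters, through the averaging argument of Remark \ref{comactintegration}; without it the implication fails, in accordance with the cautionary Remark \ref{rk:invpot}. Everything else is a bookkeeping translation through Lemma \ref{old_infmom} and Proposition \ref{infmom}.
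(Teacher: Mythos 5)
Your proposal is correct and takes essentially the same route as the paper: both proofs combine the Frégier–Zambon obstruction with Lemma \ref{old_infmom}, Proposition \ref{infmom}, and the averaging argument of Remark \ref{comactintegration}, observing that the obstruction class lies in the $C_\mathfrak{g}^\bullet$ summand. The only cosmetic difference is that you exploit the direct-sum splitting of the invariant complex at the cochain level, whereas the paper routes the same identification through the K\"unneth isomorphism in de Rham cohomology of $G\times M$.
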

\begin{proof}
From Proposition \ref{infmom} we get the following sequence of maps between complexes together with the induced maps in cohomology:
\begin{center}
\begin{tikzcd}
 \Omega^\bullet(M,\vartheta) \ar[d,"\vartheta^\ast-\pi^\ast"] &\quad
 & H_\text{dR}(M) \ar[d,"\vartheta^\ast-\pi^\ast"]  
 & \lbrack \omega \rbrack \ar[d,mapsto]
 \\ 
 \Omega^\bullet(G\times M, r\times id) \ar["\cong",leftrightarrow]{d} &\quad
 & H_\text{dR}(G\times M) \ar[leftrightarrow,"\cong"]{d}[swap]{\text{\tiny (K\"unneth)}} 
 & \lbrack \vartheta^\ast \omega - \pi^\ast \omega \rbrack \ar[ddd,mapsto]
 \\ 
 \Omega^\bullet(G,r) \otimes \Omega^\bullet(M) \ar["\cong",leftrightarrow]{d}[swap]{} &\quad
 & H_\text{dR}(G) \otimes  H_\text{dR}(M) \ar[d,"\cong",leftrightarrow]
 \\ 
 \Lambda^\bullet \mathfrak{g}^* \otimes \Omega^\bullet(M)\ar["\cong",leftrightarrow]{d} &\quad
 & H_\text{CE}(\mathfrak{g}) \otimes  H_\text{dR}(M) 
 \ar["\cong",leftrightarrow]{d} & 
 \\
 C_\mathfrak{g}^\bullet \oplus ( \mathbb{R}\otimes \Omega^\bullet(M))&\quad 
 & H(C_\mathfrak{g}^\bullet)\oplus H_\text{dR}(M)
 & \lbrack \tilde{\omega}\rbrack
\end{tikzcd}
\end{center}
The statement follows by resorting to Remark \ref{comactintegration}, i.e. noting that 
$H_\text{dR}(G) \cong H(G,r)$ and $H_\text{dR}(G\times M) \cong H(G\times M, r\times id)$ via the averaging trick on compact Lie groups.
\end{proof}

\medskip
\noindent
We will now investigate the connection between comoments and equivariant cohomology.
\begin{defi}\label{def:equivariantcoho}
Let a compact Lie group $G$ act on a manifold $M$. Let $EG$ be a contractible space on which $G$ acts freely by $\vartheta^{EG}$. Then we define the equivariant cohomology of $M$ as $H^\bullet_G(M):=H^\bullet((M\times EG)/G)$, where $G$ acts on $M\times EG$ diagonally.
\end{defi}

\begin{remark}
As $EG$ might not be a manifold, we have to interpret $H^\bullet(\cdot)$ as a suituable cohomology theory (e.g. singular cohomology with real coefficients) in the above definition. 
\end{remark}

As $G$ is compact, when $\vartheta:G\times M\to M$ is a free action, we have $H_G^\bullet(M)=H^\bullet_{dR}(M/G)$. For a not necessarily free action $\vartheta$, we still have the following diagram
\begin{center}
\begin{tikzcd}[column sep=large]
	G\times (M\times EG)  \ar[r,shift left=1.5ex,"\vartheta\times \vartheta^{EG}"]\ar[r,shift right=1.5ex,,"\pi"]
	& M\times EG \ar[r,"q"]& (M\times EG)/G,
\end{tikzcd}
\end{center}
where $q$ is the projection to the orbits, which induces a sequence in cohomology 
(where we use the contractibility of $EG$ in the  left and middle term):
\begin{center}
\begin{tikzcd}[column sep=large]
	H^\bullet(G\times M) & H^\bullet(M) \ar[l,"\vartheta^*-\pi^*"]
	& \ar[l,"q^*"]H^\bullet_G(M)
\end{tikzcd}
\end{center}
Then $q\circ \vartheta=q\circ\pi$ implies $(\vartheta^*-\pi^*)\circ q^*=0$.
Using Remark \ref{comactintegration} and Proposition \ref{infmom} we get the following statement:
\begin{thm}[\cite{Callies2016}]\label{cpteqcom}
Let $G\times M\to M$ be a compact Lie group preserving a pre-multisymplectic form $\omega$. If $[\omega]\in H^\bullet(M)$ lies in the image of $q^*:H^\bullet_G(M)\to H^\bullet(M)$, then $\vartheta$ admits a comoment.
\end{thm}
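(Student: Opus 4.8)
The plan is to deduce the statement as a purely formal consequence of Corollary \ref{cor:core} together with the cohomological sequence
\[
H^\bullet(G\times M) \xleftarrow{\;\vartheta^*-\pi^*\;} H^\bullet(M) \xleftarrow{\;q^*\;} H^\bullet_G(M)
\]
set up immediately before the theorem. By Corollary \ref{cor:core} a comoment exists precisely when the obstruction class $[\vartheta^*\omega-\pi^*\omega]$ vanishes in $H^{n+1}(G\times M)$. Since $\omega$ is closed it defines a genuine de Rham class $[\omega]\in H^{n+1}(M)$, so the entire task reduces to showing that this obstruction class is zero under the stated hypothesis.

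First I would use the hypothesis directly: as $[\omega]$ lies in the image of $q^*$, I can pick a class $\beta\in H^\bullet_G(M)$ with $q^*\beta=[\omega]$. Applying the $\mathbb{R}$-linear map $\vartheta^*-\pi^*$ then yields
\[
[\vartheta^*\omega-\pi^*\omega]=(\vartheta^*-\pi^*)[\omega]=(\vartheta^*-\pi^*)\,q^*\beta .
\]
The decisive input is the relation $(\vartheta^*-\pi^*)\circ q^*=0$, which I would take from the discussion preceding the theorem: it follows from $q\circ\vartheta=q\circ\pi$ together with the contractibility of $EG$, used to identify the cohomology of $M\times EG$ with that of $M$ and that of $G\times M\times EG$ with that of $G\times M$. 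Consequently the right-hand side vanishes, so $[\vartheta^*\omega-\pi^*\omega]=0$, and Corollary \ref{cor:core} immediately produces the desired comoment.

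Because the cohomological sequence is already in place, the argument is essentially a one-line diagram chase with no analytic content; the step I would treat as the only genuine obstacle is the bookkeeping across the distinct cohomology theories involved. Indeed, Corollary \ref{cor:core} lives in de Rham cohomology of the manifolds $M$ and $G\times M$, whereas $H^\bullet_G(M)$ is computed on the Borel construction $(M\times EG)/G$, which need not be a manifold and is handled via singular cohomology with real coefficients. The point to argue carefully is that these agree and are compatible: since $G$ is compact, de Rham and singular cohomology coincide on the manifolds in question (cf.\ Remark \ref{comactintegration}), and the comparison isomorphisms are natural with respect to the pullbacks $\vartheta^*$, $\pi^*$ and $q^*$. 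This naturality is exactly what makes both the identity $(\vartheta^*-\pi^*)\circ q^*=0$ and the equality $q^*\beta=[\omega]$ meaningful as statements in $H^{n+1}(G\times M)$, allowing Corollary \ref{cor:core} to apply verbatim and complete the proof.
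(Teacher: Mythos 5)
Your proof is correct and takes essentially the same route as the paper: the paper likewise deduces the theorem from the identity $(\vartheta^*-\pi^*)\circ q^*=0$ (a consequence of $q\circ\vartheta=q\circ\pi$ and the contractibility of $EG$) combined with the fact that, for compact $G$, a comoment exists precisely when $[\vartheta^*\omega-\pi^*\omega]=0\in H^{n+1}(G\times M)$, which is Corollary \ref{cor:core} (i.e.\ Remark \ref{comactintegration} together with Proposition \ref{infmom}). Your closing discussion of the compatibility between singular cohomology on the Borel construction and de Rham cohomology on the manifolds $M$ and $G\times M$ is a reasonable elaboration of the paper's remark that $EG$ need not be a manifold, not a deviation from its argument.
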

\begin{remark}
The advantage of our approach to the theorem is that it is much simpler and more intrinsic: We do not need to choose a model for equivariant de Rham cohomology.
In Section 6 of \cite{Callies2016} can be found similar results framed in the Bott-Shulman-Stasheff and in the Cartan model.
 Furthermore, in Section 7.5 of \cite{Callies2016}, is discussed a generalization of this statement to non-compact groups.
 \end{remark}

Unfortunately, unlike the symplectic case (cf. \cite{MR721448}), the converse statement does not hold in general. 
Even if a (pre-)multisymplectic action of $G$ on $(M,\omega)$ admits a comoment, $[\omega]$ does not need to come from an equivariant cocycle. 
We will illustrate this fact by an example.

\begin{example} \label{exnongen}
Consider the action given by the Hopf fibration $\vartheta: S^1\times S^3\to S^3$. 
Let $\omega$ be the standard volume on $S^3$. By Remark $\ref{rk:kuenneth}$ the obstructions to a comoment lie in $H^1(\mathfrak u(1))\otimes H^2(S^3) $, $ H^2(\mathfrak u(1))\otimes H^1(S^3)$ and  $H^3(\mathfrak u(1))\otimes H^0(S^3) $, which are trivial. Hence, a comoment exists. 
\\
As the action is free (and the quotient is $S^2$), we have $H^3_{S^1}(S^3)=H^3(S^2)=0$. But $[\omega]\neq 0$ in $H^3(S^3)$, so the class $[\omega]$ cannot come from an equivariant cocycle.
\end{example}

\begin{remark}
We note that this example has a different character than the ones provided in Section 7.5 of \cite{Callies2016}. They exhibit cases, where individual comoments do not come from equivariant cocycles, whereas in our case no equivariant cocycle can be found for any of the possible comoments.
\end{remark}

\section{Non-transitive multisymplectic group actions on spheres}\label{secintrans}
The goal of this section is to prove the existence of comoments for non-transitive actions and construct an explicit comoment for the $SO(n)$-action on $S^n$.
\begin{lemma}\label{lem:core}
	Let $\vartheta: G\times S^n\to S^n$ be a compact Lie group acting multisymplectically on $S^n$ equipped with the standard volume form $\omega\in \Omega^n(S^n)$. Let $p\in S^n$ be any point. Then a comoment exists if and only if $\vartheta_p^*[\omega]\in H^{n}(G)$ is zero, where $\vartheta_p:G\to S^n$ is the map $g\mapsto gp$.
\end{lemma}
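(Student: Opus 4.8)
The plan is to apply Corollary 1.18, which reduces the existence of a comoment to the vanishing of the class $[\vartheta^*\omega - \pi^*\omega] \in H^{n+1}(G \times S^n)$. So first I would invoke the Künneth theorem to decompose $H^{n+1}(G \times S^n)$. Since $S^n$ has cohomology concentrated in degrees $0$ and $n$, we get
\[
H^{n+1}(G \times S^n) \cong \big(H^{n+1}(G) \otimes H^0(S^n)\big) \oplus \big(H^1(G) \otimes H^n(S^n)\big).
\]
Under this splitting I would compute the two Künneth components of $[\vartheta^*\omega - \pi^*\omega]$ separately and show that both reduce to data captured by $\vartheta_p^*[\omega] \in H^n(G)$.

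The key observation to make is that $\pi^*\omega$, being pulled back from the second factor, lives purely in the $H^0(G) \otimes H^n(S^n)$ summand, so it contributes nothing to either of the two summands appearing above; it only affects the degree-$(0,n)$ part, which is irrelevant at total degree $n+1$. Hence the class in question equals the image of $[\vartheta^*\omega]$ in these two summands. Next I would argue that the $H^{n+1}(G) \otimes H^0(S^n)$ component vanishes automatically: this component is precisely the class $[c_p^{\mathfrak{g}}]$ of Remark 1.15, which is the pullback of the top form along the orbit map restricted to a single point, and for degree reasons (or by directly noting that $\iota^{n+1}_{\mathfrak g}\omega$ evaluated at a fixed point is handled by the $n$-dimensionality of the fibre) it sits in the component that I will relate to $H^n(G)$. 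More carefully, the surviving obstruction lives in $H^1(G) \otimes H^n(S^n) \cong H^1(G)$, and I expect the main content of the lemma to be the identification of this component with $\vartheta_p^*[\omega]$.

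The crucial step, therefore, is to show that the $H^1(G)\otimes H^n(S^n)$-component of $[\vartheta^*\omega]$ corresponds under the canonical isomorphism $H^n(S^n) \cong \mathbb{R}$ (via integration, using the orientation given by $\omega$) to the class $\vartheta_p^*[\omega] \in H^n(G)$. I would do this by fixing a point $p \in S^n$ and considering the restriction $\vartheta_p = \vartheta(\cdot\,, p) \colon G \to S^n$. The idea is that integrating $\vartheta^*\omega$ over the $S^n$-factor (fibre integration along $\pi$) produces a form on $G$ whose class is exactly $\vartheta_p^*[\omega]$, using that $\omega$ generates $H^n(S^n)$ and that the relevant slant product or fibre integration recovers the orbit-map pullback; independence of the choice of $p$ follows from connectedness of $S^n$ as already noted in Remark 1.15. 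The main obstacle I anticipate is bookkeeping: correctly matching the Künneth decomposition of $\vartheta^*\omega$ with the pullback $\vartheta_p^*[\omega]$, in particular verifying that the full class $[\vartheta^*\omega - \pi^*\omega]$ vanishes if and only if this single component does, i.e. confirming that the top component $H^{n+1}(G)\otimes H^0(S^n)$ cannot carry an independent obstruction. I would resolve this by noting that $\vartheta^*\omega$ is a pullback of an $n$-form, so its total degree is $n$ on each factor combined appropriately, forcing all of its weight into the $H^1(G)\otimes H^n(S^n)$ part and leaving the $H^{n+1}(G)\otimes H^0(S^n)$ summand empty.
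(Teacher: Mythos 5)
Your opening move coincides with the paper's: reduce via Corollary \ref{cor:core} to the vanishing of $[\vartheta^*\omega-\pi^*\omega]$ and decompose by K\"unneth. From there, however, the proposal goes wrong in two ways. First, a degree error: the standard volume form on $S^n$ is an $n$-form (the sphere is $(n-1)$-plectic), so the obstruction lives in $H^{n}(G\times S^n)$, whose only nonzero K\"unneth summands are $H^n(G)\otimes H^0(S^n)$ and $H^0(G)\otimes H^n(S^n)$; the summand $H^1(G)\otimes H^n(S^n)$ you single out sits in total degree $n+1$ and plays no role. Second --- and this persists even after correcting the degrees --- your bidegree analysis is exactly backwards. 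Since $\tilde\omega=\vartheta^*\omega-\pi^*\omega$ has first-factor (Lie-algebra) degree $\geq 1$ (Proposition \ref{infmom}; equivalently, restricting to $\{e\}\times S^n$ the two terms cancel), its component in $H^0(G)\otimes H^n(S^n)$ vanishes, the middle components vanish because $H^j(S^n)=0$ for $0<j<n$, and the \emph{entire} obstruction sits in $H^n(G)\otimes H^0(S^n)$ --- the very summand you declare empty. Moreover it is not automatically zero there: for $SO(2n)$ acting transitively on $S^{2n-1}$ it is nonzero, which is the whole content of Theorem \ref{thm:surprise}; had your claim been right, every such action would admit a comoment. Note also that $\vartheta_p^*[\omega]$ lies in $H^n(G)$, so it could never be identified with a class in $H^1(G)$ as your outline requires.

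Your extraction mechanism is likewise degree-inconsistent: fibre integration of the $n$-form $\vartheta^*\omega$ over the $n$-dimensional fibre $S^n$ produces a $0$-form on $G$, not the degree-$n$ class $\vartheta_p^*[\omega]$; it would pick out the $(0,n)$-component, which is zero. The correct (and much simpler) device, used in the paper, is restriction rather than integration: pull back along $i\colon G\to G\times S^n$, $g\mapsto (g,p)$. Since $\vartheta\circ i=\vartheta_p$ and $\pi\circ i$ is constant, one gets $i^*[\vartheta^*\omega-\pi^*\omega]=\vartheta_p^*[\omega]$, and since $i^*$ is an isomorphism on the summand $H^n(G)\otimes H^0(S^n)$ in which the whole class lives, its vanishing is equivalent to $\vartheta_p^*[\omega]=0$. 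With the bidegree claim reversed and fibre integration replaced by this restriction, your outline becomes the paper's proof; as written, the key identification step fails.
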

\begin{proof}
	By Corollary \ref{cor:core}, we only have to check that
	$$ 
	[\vartheta^*\omega-\pi^*\omega]=0\in H^{n}(G\times S^n)
	\quad\Leftrightarrow\quad 
	\vartheta_p^*[\omega]= 0\in H^{n}(G)~.$$
	
	The direct implication follows by considering the map $i:G\to G\times S^n,~~g \mapsto (g,p)$ and its induced linear map in cohomology $i^* : H^\bullet(G \times S^n) \to H^\bullet(S^n)$ which acts on $[\vartheta^*\omega - \pi^*\omega] \in H^n(G\times S^n)$ as
\begin{displaymath}
	i^* [\vartheta^* \omega - \pi^* \omega ] = 
	[(\vartheta \circ i)^\ast \omega - \cancel{(\pi \circ i)^\ast \omega}] = \vartheta^*_p[\omega] ,
\end{displaymath}
	because $\vartheta \circ i = \vartheta_p$ and $\pi \circ i $ is the constant map valued in $p\in S^n$.
	
	For the converse implication, note at first that the cohomology of the sphere implies
	$$H^{n}(G\times S^n)=(H^n(G)\otimes H^0(S^n))\oplus (H^0(G)\otimes H^n(S^n)) .	$$ 
	 Therefore, recalling Proposition \ref{infmom}, the obstruction $[\vartheta^*\omega-\pi^*\omega]$ lies entirely in $H^n(G)\otimes H^0(S^n)$ as $[\tilde{\omega}]$ has null component in $ H^0(G)\otimes H^n(S^n)$.
	 Since the restriction of $i^*$ to $H^n(G)\otimes H^0(S^n)$ is an isomorphism (the $0$-th cohomology group of a connected manifold is isomorphic to $\mathbb{R}$), one can conclude that $[\vartheta^*\omega-\pi^*\omega]$ vanishes if and only if 
	$$i^*[\vartheta^*\omega-\pi^*\omega]=\vartheta_p^*[\omega]=0\in H^n(G) .$$
\end{proof}
\begin{remark}
	Note that the direct implication in the proof of Lemma \ref{lem:core} does not depends from being the base manifold a sphere.	
	In other words, a necessary condition for the existence of a comoment is that the pullback of $\omega$ with respect to any orbit map vanishes.
\end{remark}
\begin{remark}
Observe that a result similar to Lemma \ref{lem:core} can be stated for any compact multisymplectic action $G \action (	M,\omega)$, with $\omega$ in degree $n+1$, such that the following cohomological condition holds
$$
\bigoplus_{k=1}^n H^k(G)\otimes H^{n-k}(M) = 0 .
$$
In particular, the same result is true for the action of any compact, connected and semisimple Lie group $G$ (i.e. such that $H^1(\mathfrak{g})=H^2(\mathfrak{g})=0$) acting on a $2$-plectic manifold.
See \cite[Proposition 7.1]{Callies2016} for an existence result for comoments related to this kind of actions in presence of a fixed point.
\end{remark}
\begin{prop}\label{prop:intransitive}
Let $G$ be a compact Lie group acting non-transitively on $S^n$ and preserving the standard volume form. Then $G$ admits a comoment.
\end{prop}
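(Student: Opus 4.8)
The plan is to reduce the claim to the cohomological criterion established in Lemma~\ref{lem:core}. By that lemma, a comoment exists if and only if $\vartheta_p^*[\omega] = 0 \in H^n(G)$ for some (equivalently, any) point $p \in S^n$, where $\vartheta_p \colon G \to S^n$ is the orbit map $g \mapsto gp$. So the entire task is to show that non-transitivity forces this pullback class to vanish.

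First I would fix a point $p$ whose orbit $\mathcal{O}_p = \vartheta_p(G)$ is \emph{not} all of $S^n$; such a $p$ exists precisely because the action is non-transitive. The key geometric observation is that the orbit map $\vartheta_p$ factors through the inclusion of the orbit, i.e. $\vartheta_p = \iota \circ \bar\vartheta_p$ where $\bar\vartheta_p \colon G \to \mathcal{O}_p$ is the (surjective) orbit map onto $\mathcal{O}_p \cong G/G_p$ and $\iota \colon \mathcal{O}_p \hookrightarrow S^n$ is the inclusion. On cohomology this gives the factorization
\begin{displaymath}
	\vartheta_p^* = \bar\vartheta_p^* \circ \iota^* \colon H^n(S^n) \to H^n(\mathcal{O}_p) \to H^n(G).
\end{displaymath}
It therefore suffices to show that $\iota^*[\omega] = 0 \in H^n(\mathcal{O}_p)$, since then $\vartheta_p^*[\omega] = \bar\vartheta_p^*(\iota^*[\omega]) = 0$.

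The heart of the argument is then showing $\iota^*[\omega]=0$. Because $\mathcal{O}_p$ is a proper orbit, it is either a submanifold of $S^n$ of dimension strictly less than $n$, or a full-dimensional submanifold-with-boundary whose interior omits at least one point of $S^n$. In the first case $H^n(\mathcal{O}_p)=0$ outright (an orbit is a closed manifold of dimension $<n$, or more carefully an immersed/embedded homogeneous space of dimension $\le n-1$), so $\iota^*[\omega]$ lives in a trivial group. In the case $\dim \mathcal{O}_p = n$, the orbit is an open subset of $S^n$ (orbits of a fixed dimension are open in their union, and a top-dimensional orbit is open in $S^n$) that is not all of $S^n$; since $S^n$ minus a point is contractible, any open proper subset has vanishing top de Rham cohomology, giving $H^n(\mathcal{O}_p)=0$ again, or at least $\iota^*[\omega]=0$ because the volume form restricted to a non-compact (open, proper) orbit cannot be a nontrivial top class. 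Either way $\iota^*[\omega]=0$, and the factorization above yields $\vartheta_p^*[\omega]=0$.

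The step I expect to be the main obstacle is handling the dimension count for orbits cleanly and uniformly, in particular the $\dim \mathcal{O}_p = n$ subcase: one must argue carefully that a proper top-dimensional orbit is an open proper subset of $S^n$ and hence carries no nontrivial top cohomology. The cleanest route is probably to invoke the Principal Orbit Theorem (or simply that for a compact group action there is a point with orbit of maximal dimension and, by non-transitivity, that maximal dimension is $\le n$ with the orbit failing to exhaust $S^n$), then observe that $\omega$ restricted to any proper open subset $U \subsetneq S^n$ is exact because $H^n(U)=0$ for such $U$. Once the vanishing $\iota^*[\omega]=0$ is secured, the rest is the formal factorization already outlined, and the conclusion follows immediately from Lemma~\ref{lem:core}.
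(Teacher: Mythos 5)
Your proposal is correct and takes essentially the same route as the paper: reduce to Lemma~\ref{lem:core} and kill $\vartheta_p^*[\omega]$ by factoring through the inclusion of an orbit of dimension strictly less than $n$. The only difference is that you labor over a vacuous case --- since $G$ is compact, every orbit is a compact embedded submanifold (never a manifold with boundary), so a top-dimensional orbit would be open and closed in $S^n$ and hence all of $S^n$, contradicting non-transitivity --- but your fallback argument for that case is valid anyway, so correctness is unaffected.
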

\begin{proof} If $G$ acts non-transitively then there exists an orbit $O\subset S^n$ of dimension strictly less than $n$. Let $p\in O$. Then we have $\vartheta_p^*[\omega]=\vartheta_p^*[\omega|_O]$, but $\omega|_O\in \Omega^n(O)$ is zero due to dimension reasons. Hence, the action admits a comoment, due to Lemma \ref{lem:core}.
\end{proof}
\begin{remark}
It is worth to mention that for non-transitive, free and proper actions it is possible to state the multisymplectic equivalent of the "symplectic reduction" procedure, see Example 2.2 in \cite{Bursztyn2019}.
\end{remark}

\subsection{Induced comoments and isotropy subgroups}
In this subsection we will show that comoments behave well under restriction to subgroups and invariant submanifolds. This will be useful for constructing an $SO(n)$-comoment for $S^n$ in the next subsection. 
Let $G$ be a Lie group with Lie algebra $\mathfrak{g}$, acting on a pre-$n$-plectic manifold $(M,\omega)$ with comoment map $(f)\colon \mathfrak{g}\to L_{\infty}(M,\omega)$. 
One can obtain new actions either restricting to a Lie subgroup of $G$ or restricting to an invariant submanifold of $(M,\omega)$. 
\begin{prop}[Lemma 3.1 in \cite{Shahbazi2016}]\label{prop:restrict}
	Let $H\subset G$ be a Lie subgroup, and denote by $j\colon \mathfrak{h}\hookrightarrow \mathfrak{g}$ the corresponding Lie algebra inclusion. 
	The restricted action of $H$ on $(M,\omega)$ has comoment $(f \circ j): \mathfrak{h}\to L_{\infty}(M,\omega)$, given by $f_i\circ j:\Lambda^i\mathfrak{h}\to \Omega^{n-i}(M)$ for $i=1,\dots,n$.
\end{prop}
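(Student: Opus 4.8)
The plan is to prove the statement directly from the explicit characterization of comoments by the equations \eqref{eq:fk_hcmm}, which is the most economical route given the tools already developed. Write $v\colon\mathfrak g\to\mathfrak X(M)$ for the infinitesimal $G$-action. Since $H\subset G$ is a subgroup, the infinitesimal $H$-action is literally the restriction $v\circ j\colon\mathfrak h\to\mathfrak X(M)$; consequently, for a decomposable $q=\xi_1\wedge\dots\wedge\xi_k\in\Lambda^k\mathfrak h$, the Hamiltonian multivector field entering the $H$-comoment equation is $v_{j\xi_1}\wedge\dots\wedge v_{j\xi_k}=v_{\Lambda^k j(q)}$. Setting $g_k\coloneqq f_k\circ\Lambda^k j\colon\Lambda^k\mathfrak h\to\Omega^{n-k}(M)$ (this is what the compact notation $f_i\circ j$ abbreviates), I would reduce the proposition to checking that the family $(g_k)$ satisfies \eqref{eq:fk_hcmm} for the $H$-action, the boundary conditions $g_0=g_{n+1}=0$ being immediate from $f_0=f_{n+1}=0$.

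The one structural ingredient I would isolate first is that the inclusion $j\colon\mathfrak h\hookrightarrow\mathfrak g$, being a Lie algebra homomorphism, induces maps $\Lambda^\bullet j$ that intertwine the two Chevalley--Eilenberg boundary operators:
\[
\partial^{\mathfrak g}\circ\Lambda^k j=\Lambda^{k-1}j\circ\partial^{\mathfrak h}.
\]
This follows by a one-line computation from \eqref{eq:CE_boun}: applying $\partial^{\mathfrak g}$ to $j\xi_1\wedge\dots\wedge j\xi_k$ replaces each pair by the bracket $[j\xi_i,j\xi_l]=j[\xi_i,\xi_l]$, which factors back through $\Lambda^{k-1}j$.

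With this in hand, the verification is a substitution. Evaluating the $G$-comoment equation \eqref{eq:fk_hcmm} on the element $\Lambda^k j(q)\in\Lambda^k\mathfrak g$ gives
\[
-f_{k-1}\bigl(\partial^{\mathfrak g}\Lambda^k j(q)\bigr)=d\,f_k\bigl(\Lambda^k j(q)\bigr)+\varsigma(k)\,\iota(v_{\Lambda^k j(q)})\,\omega.
\]
Rewriting the left-hand side through the intertwining relation as $-f_{k-1}\bigl(\Lambda^{k-1}j(\partial^{\mathfrak h}q)\bigr)=-g_{k-1}(\partial^{\mathfrak h}q)$, and the right-hand side using $f_k\circ\Lambda^k j=g_k$ while recognizing $v_{\Lambda^k j(q)}$ as the Hamiltonian multivector field of the restricted action, produces exactly the comoment equation \eqref{eq:fk_hcmm} for $H$ with components $(g_k)$. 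This establishes that $(f\circ j)$ is a comoment for the $H$-action.

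I do not expect a genuine obstacle here: conceptually the statement is nothing more than the composition of the comoment, viewed as an $L_\infty$-morphism $(f)\colon\mathfrak g\to L_\infty(M,\omega)$, with the $L_\infty$-morphism induced by $j$, and the fact that $j$ is concentrated in degree zero forces the composite to have Taylor components $f_k\circ\Lambda^k j$. The only points that demand care are the chain-map identity above and the Koszul sign factors $\varsigma(k)$, both of which are elementary; since these signs are untouched by precomposition with $\Lambda^k j$, they transfer verbatim from the $G$-equations to the $H$-equations.
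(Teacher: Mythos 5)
Your proof is correct and complete. Note that the paper itself contains no proof of Proposition \ref{prop:restrict}: the statement is quoted from Lemma 3.1 of \cite{Shahbazi2016} and accompanied only by a commuting diagram, so there is no argument in the text to diverge from. Your verification is the standard one and matches the paper's framework exactly: the intertwining identity $\partial^{\mathfrak g}\circ\Lambda^k j=\Lambda^{k-1}j\circ\partial^{\mathfrak h}$ (immediate from \eqref{eq:CE_boun} since $j$ is a Lie algebra homomorphism), substitution of $\Lambda^k j(q)$ into \eqref{eq:fk_hcmm}, and the observation that the signs $\varsigma(k)$ are untouched by precomposition, together with $g_0=g_{n+1}=0$, settle all cases $k=1,\dots,n+1$, including the top constraint $k=n+1$ where $f_{n+1}=0$ is used. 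Your closing remark is also the conceptually right one and agrees with the paper's own picture of a comoment as an $L_\infty$-morphism lifting the action: $j$ induces a strict morphism concentrated in degree zero, so the composite has Taylor components $f_k\circ\Lambda^k j$. The only cosmetic point is terminology: $v_{\Lambda^k j(q)}$ is the fundamental multivector field of the restricted action rather than a ``Hamiltonian multivector field,'' but this does not affect the argument.
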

\begin{center}
	\begin{tikzcd}
		H \ar[symbol=\action]{r} \ar[hook]{d}& M \ar[equal]{d}& & \mathfrak{h} \ar[hook,"j"']{d} \ar[dashed]{dr} & \\
		G \ar[symbol=\action]{r}& M & & \mathfrak{g}\ar["(f)"']{r} & L_{\infty}(M,\omega) 
	\end{tikzcd}
\end{center}
\begin{prop}[Lemma 3.2 in \cite{Shahbazi2016}]
	\label{prop:NenMGinvariant} 
	Let $N\overset{i}{\hookrightarrow} M$ be a $G$-invariant submanifold of $M$.
	\\
	Then the action $G\action \left(N,i^{\ast}\omega\right)$ has comoment $(i^{\ast} \circ f): \mathfrak{g}\to L_{\infty}\left(N,i^{\ast}\omega\right)$
	, given by $i^*\circ f_i:\Lambda^i\mathfrak{h}\to \Omega^{n-i}(N)$ for $i=1,\dots,n$.
\end{prop}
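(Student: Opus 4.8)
The plan is to verify directly that the components $g_k := i^\ast \circ f_k$ satisfy the defining equations \eqref{eq:fk_hcmm} of a comoment for the action on $(N, i^\ast\omega)$, exploiting the naturality of the de Rham differential and of contraction under pullback. Since we are handed an \emph{explicit} comoment $(f)$ and asked to produce an explicit one on $N$, this pointwise-equational route is more natural than the cohomological criteria of the previous subsection.

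First I would record the preliminary observation that makes everything work: because $N$ is $G$-invariant, the flow of each fundamental vector field $v_\xi$ preserves $N$, so $v_\xi$ is tangent to $N$ along $N$ and restricts to a vector field $v_\xi^N \in \mathfrak{X}(N)$. The assignment $\xi \mapsto v_\xi^N$ is again a Lie algebra homomorphism and, since $i^\ast$ commutes with $\mathcal{L}$ and $d$, it preserves $i^\ast\omega$; the latter is closed ($d\,i^\ast\omega = i^\ast d\omega = 0$) and in particular pre-multisymplectic, and it remains a form of degree $n+1$, so the grading of the observable complex $L_\infty(N, i^\ast\omega)$ coincides with that of $L_\infty(M,\omega)$ and the maps $i^\ast\circ f_i \colon \Lambda^i\mathfrak{g}\to\Omega^{n-i}(N)$ have the correct targets. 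On a decomposable $p = x_1\wedge\dots\wedge x_k$ I write $v_p^N = v_{x_1}^N\wedge\dots\wedge v_{x_k}^N$ for the corresponding multivector field on $N$.

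The single technical ingredient is the compatibility of contraction with pullback along $i$ for vector fields tangent to $N$: for any such $X$ and any form $\alpha$ on $M$ one has $i^\ast(\iota_X\alpha) = \iota_{X|_N}(i^\ast\alpha)$, and iterating gives $i^\ast(\iota(v_p)\omega) = \iota(v_p^N)(i^\ast\omega)$. Granting this, I would apply $i^\ast$ to equation \eqref{eq:fk_hcmm} for $(f)$ and push it through the three operations: $\mathbb{R}$-linearity on the term $-f_{k-1}(\partial p)$ (note that $\partial$ lives on the $\Lambda^\bullet\mathfrak{g}$ side and is untouched by $i^\ast$), the identity $i^\ast d = d\,i^\ast$ on $df_k(p)$, and the contraction identity above on $\varsigma(k)\iota(v_p)\omega$. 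The result is exactly
\begin{displaymath}
	-g_{k-1}(\partial p) = d\,g_k(p) + \varsigma(k)\,\iota(v_p^N)\,(i^\ast\omega),
\end{displaymath}
which is \eqref{eq:fk_hcmm} for the triple $(N, i^\ast\omega, v^N)$. The boundary conditions $g_0 = g_{n+1} = 0$ are immediate from $f_0 = f_{n+1} = 0$, and specializing to $k=1$ gives $d(i^\ast f_1(\xi)) = -\iota_{v_\xi^N}(i^\ast\omega)$, so each $i^\ast f_1(\xi)$ is a Hamiltonian form on $N$ with Hamiltonian vector field $v_\xi^N$; hence $(i^\ast\circ f)$ genuinely lands in $L_\infty(N, i^\ast\omega)$.

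I expect no serious obstacle: the entire content is the naturality identity $i^\ast\iota_X = \iota_{X|_N}i^\ast$ for tangent $X$, which is standard. The only point deserving care is to keep track that this identity is valid precisely because the $v_\xi$ are genuinely tangent to $N$ — this is exactly where $G$-invariance of $N$ enters, and it is why the hypothesis is an \emph{invariant} submanifold rather than an arbitrary one. If one also wants to transport $G$-equivariance, it follows along the same lines by pulling back the equivariance relation $\mathcal{L}_{v_\xi}f_k(\mathbf{b}) = f_1([\xi,\mathbf{b}])$ and using $i^\ast\mathcal{L}_{v_\xi} = \mathcal{L}_{v_\xi^N}i^\ast$ for tangent $v_\xi$.
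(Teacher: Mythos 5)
Your proof is correct, and it is worth noting that the paper itself supplies no argument for this proposition at all: it is quoted as Lemma 3.2 of the cited reference and accompanied only by a commutative diagram, so your direct verification effectively fills in the omitted proof rather than diverging from one. Your route is the expected one, and you correctly isolate the two points where the hypotheses enter: $G$-invariance of $N$ forces each fundamental vector field $v_\xi$ to be tangent along $N$ (its flow $\vartheta_{\exp(t\xi)}$ preserves $N$), which is exactly what licenses the naturality identity $i^\ast\iota(v_p)\omega = \iota(v_p^N)\,i^\ast\omega$; and applying $i^\ast$ to \eqref{eq:fk_hcmm} then commutes with $d$, with the $\mathbb{R}$-linear term $f_{k-1}(\partial p)$, and with the contraction term, yielding \eqref{eq:fk_hcmm} for $(N, i^\ast\omega, v^N)$. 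One refinement you handle correctly but should keep explicit: $i^\ast\omega$ may be degenerate or even zero (e.g.\ when $\omega$ is top-degree on $M$ and $\dim N \leq n$, the situation the paper itself points out just before Corollary \ref{cor:inducedmachinery}), so the conclusion must be read in the pre-multisymplectic sense; this is harmless because the multibrackets of $L_\infty(N,i^\ast\omega)$ are independent of the choice of Hamiltonian vector fields (if $\iota_v\omega = \iota_{v'}\omega$ then the contractions agree), and the characterization of comoments via \eqref{eq:fk_hcmm} that you invoke holds in that generality. Your closing remark on transporting equivariance via $i^\ast\mathcal{L}_{v_\xi} = \mathcal{L}_{v_\xi^N}\,i^\ast$ is also sound (you reproduce the paper's displayed equivariance condition verbatim, where $f_1$ on the right-hand side should plainly read $f_k$ — a typo in the source, not in your reasoning).
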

\begin{center}
	\begin{tikzcd}
		G \ar[symbol=\action]{r} \ar[equal]{d}& N \ar[hook,"i"]{d}& & & L_{\infty}(N, i^\ast\omega) \\
		G \ar[symbol=\action]{r}& M & & \mathfrak{g}\ar["(f)"']{r}\ar[dashed]{ur} & L_{\infty}(M, \omega) \ar["i^\ast"']{u} 
	\end{tikzcd}
\end{center}

Also, we can produce a new comoment by considering a different multisymplectic form obtained contracting $\omega$ with cycles in Lie algebra homology
\begin{equation}
	Z_k(\mathfrak{g}) = \{ p \in \Lambda^k \mathfrak{g} \; \vert: \partial p = 0 \}
	\quad.
\end{equation}
\begin{prop}[Proposition 3.8 in \cite{Ryvkin2016}]\label{prop:indmomap}
	Let $p\in Z_{k}(\mathfrak{g})$, for some $k \geq 1$,
	denote by $G_{p}$ the corresponding isotropy group for the adjoint action of $G$ on $\Lambda^{k}\mathfrak{g}$, and by $\mathfrak{g}_p=\{x\in \mathfrak{g}: [x,p]=0\}$ its Lie algebra.
	\\
	If $G_p^0$ is the connected component of the identity in $G_p$, then the action $G_p^0\action \left(M,\iota(v_p)\omega\right)$ admits a comoment
	$(f^p) \colon \mathfrak{g}_p \to L_{\infty}(M,\iota(v_p)\omega)$ with components $(i=1,\dots,n-k)$:
	\begin{align*}
	f^p_i \colon \Lambda^i\mathfrak{g}_p &\to\Omega^{n-k-i}(M),\\ q&\mapsto -\varsigma(k)f_{i+k}(q\wedge p).
	\end{align*}
\end{prop}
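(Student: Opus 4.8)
The plan is to verify directly that the sequence $(f^p)$ satisfies the defining equations \eqref{eq:fk_hcmm} of a comoment for the $\mathfrak{g}_p$-action on $(M,\iota(v_p)\omega)$, by repackaging the equations already satisfied by the given comoment $(f)$. Throughout I may take $p=\eta_1\wedge\dots\wedge\eta_k$ decomposable and extend by linearity, since every map in sight is linear in $p$.

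First I would check that the data are well-posed, i.e. that $\tilde\omega:=\iota(v_p)\omega\in\Omega^{n+1-k}(M)$ is a genuine pre-$(n-k)$-plectic form preserved by $\mathfrak{g}_p$. Closedness follows from the multivector Cartan calculus of Lemma \ref{lemma:multicartan}: expanding $d\,\iota(v_p)\omega$ yields the terms $\iota(v_p)d\omega$, $\iota(v_{\partial p})\omega$, and a sum of Lie-derivative terms $\iota(\cdots)\mathcal{L}_{v_{\eta_j}}\omega$, which vanish respectively because $\omega$ is closed, because $p$ is a cycle ($\partial p=0$), and because the action is multisymplectic ($\mathcal{L}_{v_{\eta_j}}\omega=0$). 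For invariance, for $x\in\mathfrak{g}_p$ one has $\mathcal{L}_{v_x}\iota(v_p)\omega=\iota(v_p)\mathcal{L}_{v_x}\omega+\iota(v_{[x,p]})\omega$, and both summands vanish since $\mathcal{L}_{v_x}\omega=0$ and $[x,p]=0$; as $G_p^0$ is connected and generated by $\exp(\mathfrak{g}_p)$, this infinitesimal invariance integrates to $G_p^0$-invariance. The target degrees also match, since $f_{i+k}$ lands in $\Omega^{n-(i+k)}(M)=\Omega^{(n-k)-i}(M)$, exactly what a degree-$i$ component for an $(n-k)$-plectic form requires.

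The algebraic heart of the argument is a homological identity: for $q\in\Lambda^\bullet\mathfrak{g}_p$ and $p\in Z_k(\mathfrak{g})$ with $[\mathfrak{g}_p,p]=0$, one has $\partial(q\wedge p)=(\partial q)\wedge p$. Decomposing the Chevalley-Eilenberg boundary \eqref{eq:CE_boun} of $q\wedge p$ according to whether the two contracted factors both lie in $q$, both in $p$, or are mixed, the first group reassembles $(\partial q)\wedge p$, the second equals $\pm\, q\wedge\partial p=0$ because $p$ is a cycle, and for each factor $\xi_a$ of $q$ the mixed terms sum, after reordering with the appropriate Koszul signs, to a multiple of $\mathrm{ad}_{\xi_a}(p)=[\xi_a,p]=0$, using $\xi_a\in\mathfrak{g}_p$ and the adjoint action \eqref{eq:adjointactionwedge}. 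I expect the careful sign-tracking here to be the most delicate point of the whole proof.

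With this in hand I would substitute $f^p_i(q)=-\varsigma(k)f_{i+k}(q\wedge p)$ into the comoment equation \eqref{eq:fk_hcmm} for $(f^p)$ at level $i$. The left-hand side becomes $\varsigma(k)f_{i+k-1}(\partial(q\wedge p))$ by the homological identity, while the derivative term on the right becomes $-\varsigma(k)\,df_{i+k}(q\wedge p)$; invoking the original comoment equation for $(f)$ at level $i+k$ makes the two $df_{i+k}$ contributions cancel, reducing everything to an identity between contraction terms. Writing $\iota(v_q)\iota(v_p)\omega=(-1)^{ik}\iota(v_p)\iota(v_q)\omega=(-1)^{ik}\iota(v_{q\wedge p})\omega$ (contractions of vector fields anticommute, and the paper's convention gives $\iota(v_{q\wedge p})=\iota(v_p)\iota(v_q)$), the equation collapses to the purely combinatorial sign identity $-\varsigma(k)\varsigma(i+k)=(-1)^{ik}\varsigma(i)$, which I would check from $\varsigma(k)=-(-1)^{k(k+1)/2}$ using that $k(k+1)$ is even. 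Finally I would dispose of the boundary cases: the equation at $i=1$ holds because $\partial q=0$ for $q\in\Lambda^1\mathfrak{g}_p$, so $f^p_0$ never really enters, and the top equation holds because $f_{n+1}=0$ forces $f^p_{n-k+1}=0$, consistently with the formula.
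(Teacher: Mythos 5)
Your verification is correct, and it fills a gap rather than duplicating anything: the paper states this proposition with a citation to Proposition 3.8 of \cite{Ryvkin2016} and includes no proof of its own, so there is no internal argument to compare against — but your direct check is exactly the expected one. All three delicate points go through as you anticipate: the identity $\partial(q\wedge p)=(\partial q)\wedge p$ holds with your decomposition (the mixed terms for each factor $\xi_a$ of $q$ reassemble, after reordering, into $\pm[\xi_a,p]=0$, and the $p$-internal terms give $\pm\,q\wedge\partial p=0$); under the paper's convention $\iota(\xi_1\wedge\dots\wedge\xi_m)=\iota_{\xi_m}\cdots\iota_{\xi_1}$ one indeed has $\iota(v_{q\wedge p})=\iota(v_p)\iota(v_q)$, so $\iota(v_q)\iota(v_p)\omega=(-1)^{ik}\iota(v_{q\wedge p})\omega$; and the residual sign identity $-\varsigma(k)\varsigma(i+k)=(-1)^{ik}\varsigma(i)$ reduces, upon expanding $\varsigma(m)=-(-1)^{m(m+1)/2}$, to the evenness of $k(k+1)$, as you say, with the endpoint equations at $i=1$ and $i=n-k+1$ handled correctly via $\partial q=0$ and $f_{n+1}=0$ respectively.
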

\begin{remark} In the context of multisymplectic geometry and ``weak comoments'' (cf. \cite{Herman2018}) and ``multimoments'' (cf. \cite{Madsen2013}),  the subspace $ Z_{k}(\mathfrak{g})$ is often referred to as ``the Lie kernel''.
\end{remark}
\begin{prop}[Remark 3.9 in \cite{Ryvkin2016}]\label{prop:indmomap_equiv} 
	If the comoment $(f) \colon \mathfrak{g} \to L_{\infty}(M,\omega)$ is also $G$-equivariant, then the map $(f^p)$ defined in proposition \ref{prop:indmomap} is $G_p^0$-equivariant.\\
	Another equivariant comoment
	for the action of $G_p^0$ on $(M,\iota(v_p)\omega)$ is given by $(i=1,\dots,n-k)$:
	\begin{align*}
	f^p_i: \Lambda^i\mathfrak{g}_p &\to\Omega^{n-k-i}(M),\\ q&\mapsto (-1)^{k}\iota(v_q)(f_{k}(p)).
	\end{align*}
	which, in general, may differ from the one given in Proposition \ref{prop:indmomap}. 
\end{prop}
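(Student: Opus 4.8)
The plan is to establish both assertions of the proposition—the $G_p^0$-equivariance of the comoment $(f^p)$ from Proposition \ref{prop:indmomap}, and the fact that $q\mapsto (-1)^k\iota(v_q)(f_k(p))$ defines another equivariant comoment for $G_p^0\action(M,\iota(v_p)\omega)$—by direct computation. The whole argument rests on two structural facts: that $p$ is a cycle, so $\partial p=0$, and, crucially, that every $x\in\mathfrak{g}_p$ satisfies $[x,p]=0$ by definition of the isotropy algebra.

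For the first claim I would start from $f^p_i(\mathbf q)=-\varsigma(k)f_{i+k}(\mathbf q\wedge p)$ and apply $\mathcal L_{v_x}$ for $x\in\mathfrak g_p$. By $G$-equivariance of $(f)$ this equals $-\varsigma(k)f_{i+k}([x,\mathbf q\wedge p])$. Since the adjoint action on $\Lambda^\bullet\mathfrak g$ is a derivation of the wedge product, $[x,\mathbf q\wedge p]=[x,\mathbf q]\wedge p\pm \mathbf q\wedge[x,p]$, and the second summand vanishes because $x\in\mathfrak g_p$ — so the delicate sign on that term is irrelevant. What remains is $-\varsigma(k)f_{i+k}([x,\mathbf q]\wedge p)=f^p_i([x,\mathbf q])$, which is exactly the $G_p^0$-equivariance condition; here $[x,\mathbf q]\in\Lambda^i\mathfrak g_p$ because $\mathfrak g_p$ is a subalgebra.

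For the second claim, write $g_i(\mathbf q):=(-1)^k\iota(v_{\mathbf q})f_k(p)$ and $\omega_p:=\iota(v_p)\omega$, the induced $(n{-}k)$-plectic form. I would verify the comoment equations \eqref{eq:fk_hcmm} for $(g)$ by pushing $d\,g_i(\mathbf q)$ through the multi-Cartan formula (Lemma \ref{lemma:multicartan}), which expands $d\,\iota(v_{\mathbf q})f_k(p)$ into a contraction term $\iota(v_{\mathbf q})\,d f_k(p)$, a boundary term $\iota(v_{\partial\mathbf q})f_k(p)$, and Lie-derivative terms $\sum_l\pm\,\iota(\cdots)\mathcal L_{v_{x_l}}f_k(p)$. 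The two structural facts now do the work: since $\partial p=0$, the comoment equation for $(f)$ collapses to $d f_k(p)=-\varsigma(k)\iota(v_p)\omega=-\varsigma(k)\omega_p$, so the contraction term reproduces the required $\iota(v_{\mathbf q})\omega_p$; and since each slot $x_l$ of $\mathbf q$ lies in $\mathfrak g_p$, equivariance of $(f)$ gives $\mathcal L_{v_{x_l}}f_k(p)=f_k([x_l,p])=0$, killing every Lie-derivative term. The boundary term matches $g_{i-1}(\partial\mathbf q)$, using that the Chevalley–Eilenberg boundary of $\mathfrak g_p$ agrees with that of $\mathfrak g$ on the subalgebra. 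Equivariance of $(g)$ is then immediate: expanding $\mathcal L_{v_x}\bigl(\iota(v_{\mathbf q})f_k(p)\bigr)=\iota(v_{[x,\mathbf q]})f_k(p)+\iota(v_{\mathbf q})\mathcal L_{v_x}f_k(p)$ and again using $\mathcal L_{v_x}f_k(p)=f_k([x,p])=0$ leaves exactly $g_i([x,\mathbf q])$.

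The main obstacle is the sign bookkeeping: I expect the conceptual content (vanishing of the boundary-induced and Lie-derivative contributions) to be immediate from $\partial p=0$ and $[\mathfrak g_p,p]=0$, but reconciling the Koszul factor $\varsigma(i)$ demanded by \eqref{eq:fk_hcmm} for the $(n{-}k)$-plectic form $\omega_p$ against the factor $\varsigma(k)$ and the $(-1)^k$ carried by $g_i$, together with the index-dependent signs produced by the multi-Cartan expansion and by contracting the degree-$i$ multivector $v_{\mathbf q}$, requires care. I would track these signs by the same device used in the proof of Lemma \ref{lem:extexact}, verifying that they conspire to produce precisely $\varsigma(i)$.
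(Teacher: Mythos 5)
The paper never proves this proposition --- it is imported verbatim from Remark 3.9 of the cited reference --- so your attempt can only be measured against the natural direct verification, which is indeed the route you set up. Your first part is complete and correct: equivariance of $(f)$, the derivation property of the adjoint action, and $[x,p]=0$ for $x\in\mathfrak g_p$ are exactly what is needed, the sign on the vanishing term $\mathbf q\wedge[x,p]$ is genuinely irrelevant, and $[x,\mathbf q]\in\Lambda^i\mathfrak g_p$ because $\mathfrak g_p$ is a subalgebra. The skeleton of your second part is also the right one: $\partial p=0$ collapses \eqref{eq:fk_hcmm} to $df_k(p)=-\varsigma(k)\iota(v_p)\omega$, equivariance kills every Lie-derivative term in the multi-Cartan expansion (Lemma \ref{lemma:multicartan}), and the remaining boundary term is the candidate for $g_{i-1}(\partial\mathbf q)$. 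Your equivariance computation for $(g)$ via \eqref{eq:multiliecartan} is likewise fine.

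The genuine gap is precisely the step you postpone. The signs do \emph{not} ``conspire to produce $\varsigma(i)$'' for the constant prefactor $(-1)^k$, at least not in this paper's conventions. Run your own plan at $i=1$: Cartan gives
\begin{equation*}
d\bigl((-1)^k\,\iota_{v_x}f_k(p)\bigr)
=(-1)^k\bigl(\mathcal L_{v_x}f_k(p)-\iota_{v_x}\,df_k(p)\bigr)
=(-1)^k\varsigma(k)\,\iota_{v_x}\iota(v_p)\omega ,
\end{equation*}
whereas \eqref{eq:fk_hcmm} demands $-\iota_{v_x}\iota(v_p)\omega$; since $(-1)^k\varsigma(k)=-(-1)^{k(k+3)/2}$, this already fails for $k\equiv 2,3 \pmod 4$. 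Pushing to general $i$, the $\omega_p$-terms force the prefactor $(-1)^i\varsigma(i)\varsigma(k)$, which \emph{depends on $i$} (e.g.\ for $k=1$ one needs $-1$ at $i=1$ but $+1$ at $i=2$), and with the constant prefactor the boundary terms in your multi-Cartan expansion only cancel against $g_{i-1}(\partial \mathbf q)$ when $i$ is odd. So the one step you waved off is exactly where the statement is fragile: an honest execution of your plan ends in a corrected, $i$-dependent sign (the printed $(-1)^k$ evidently reflects the differing conventions of the cited source), not in a confirmation. There is moreover a shortcut that both completes the argument and pins the sign without any multi-Cartan bookkeeping: since $\partial p=0$ and $\mathcal L_{v_x}f_k(p)=f_k([x,p])=0$ for $x\in\mathfrak g_p$, the form $\alpha:=-\varsigma(k)f_k(p)$ is a $\mathfrak g_p$-invariant primitive of $\iota(v_p)\omega$; Lemma \ref{lem:extexact} (whose proof uses only infinitesimal invariance, so it applies to the connected group $G_p^0$) then immediately yields the \emph{equivariant} comoment $q\mapsto(-1)^{i-1}\varsigma(i)\,\iota(v_q)\alpha=(-1)^i\varsigma(i)\varsigma(k)\,\iota(v_q)f_k(p)$, confirming the corrected prefactor and delivering equivariance for free.
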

\noindent

\vspace{1em}
Provided certain conditions are met, it is possible to induce a comoment on an invariant submanifold of $M$ even if the obvious pullback vanishes (e.g.~when  $\omega$ is a top dimensional form).
In what follows we will make use of the following corollary subsuming the contents of the previous propositions:
\begin{cor}\label{cor:inducedmachinery}
	Let $G\action(M,\omega)$ be a multisymplectic group action.
	If there exists:
	\begin{itemize}[topsep=1pt,itemsep=0pt, partopsep=1pt]
		\item another multisymplectic manifold $(N,\eta)$ containing $M$ as a $G$-invariant embedding $j:M\hookrightarrow N$;
		\item a Lie group $H \supset G$ containing $G$ as a Lie subgroup;
		\item a multisymplectic action $H \action (N,\eta)$ with equivariant comoment $s:\mathfrak{h}\to L_\infty(N,\eta)$;
		\item an element $p\in Z_k(\mathfrak{h})$ in the Lie kernel of $\mathfrak{h}$ such that $G \subset H_p$ and $\omega = j^\ast \iota_p \eta$;
	\end{itemize}
	then the action $G\action(M,\omega)$ 	admits an equivariant comoment,	given by $(i=1,\dots,n-k)$:
	\begin{align*}
		f_i:\Lambda^i\mathfrak{g}&\to\Omega^{n-k-i}(M),\\ 
		q&\mapsto (-1)^{k}j^\ast\iota(v_q)(s_{k}(p)).
	\end{align*} 	
\end{cor}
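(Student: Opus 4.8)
The plan is to obtain the asserted comoment by composing the three reduction operations recorded in Propositions \ref{prop:indmomap_equiv}, \ref{prop:restrict} and \ref{prop:NenMGinvariant}, applied in that order, and to keep track of the effect on the explicit component formulas at each step. The starting point is the equivariant comoment $s\colon\mathfrak h\to L_\infty(N,\eta)$ for $H\action(N,\eta)$, and the target is an equivariant comoment for $G\action(M,\omega)$.

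First I would perform the Lie-kernel reduction on $(N,\eta)$. Since $p\in Z_k(\mathfrak h)$ is a cycle and $s$ is an equivariant comoment, Proposition \ref{prop:indmomap_equiv} produces an $H_p^0$-equivariant comoment for the action $H_p^0\action(N,\iota(v_p)\eta)$ whose $i$-th component is $q\mapsto(-1)^k\iota(v_q)(s_k(p))$ for $q\in\Lambda^i\mathfrak h_p$. Next, the hypothesis $G\subset H_p$ yields the Lie algebra inclusion $\mathfrak g\subset\mathfrak h_p$, so restricting along it via Proposition \ref{prop:restrict} gives a comoment for $G\action(N,\iota(v_p)\eta)$ with the same formula, now evaluated on $q\in\Lambda^i\mathfrak g$. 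Finally, $M\overset{j}{\hookrightarrow}N$ is a $G$-invariant embedding with $j^*\iota(v_p)\eta=\omega$, so Proposition \ref{prop:NenMGinvariant} pulls this comoment back to one for $G\action(M,\omega)$ whose $i$-th component is $q\mapsto(-1)^k j^*\iota(v_q)(s_k(p))$, which is precisely the claimed formula.

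The only point requiring care beyond citing these propositions is that $G$-equivariance survives the two restriction steps, since Propositions \ref{prop:restrict} and \ref{prop:NenMGinvariant} are stated for comoments without explicit reference to equivariance. For the subgroup restriction this is immediate: the identity $\mathcal L_{v_\xi}f_k(\mathbf b)=f_1([\xi,\mathbf b])$ restricts verbatim from $\xi\in\mathfrak h_p$, $\mathbf b\in\Lambda^k\mathfrak h_p$ to $\xi\in\mathfrak g$, $\mathbf b\in\Lambda^k\mathfrak g$, because $\mathfrak g$ is a subalgebra of $\mathfrak h_p$ and hence $[\xi,\mathbf b]$ stays in $\Lambda^k\mathfrak g$. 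For the submanifold restriction, the $G$-invariance of $M$ guarantees that each fundamental vector field $v_\xi$ with $\xi\in\mathfrak g$ is tangent to $M$, so that $j^*$ intertwines the Lie derivatives $\mathcal L_{v_\xi}$ on $N$ and on $M$; applying $j^*$ to the equivariance identity on $N$ therefore reproduces it on $M$. I expect this bookkeeping of tangency and intertwining to be the main, though mild, obstacle, together with the observation that the entire argument lives at the infinitesimal level, where only the Lie algebra inclusion $\mathfrak g\hookrightarrow\mathfrak h_p$ is needed rather than a connectedness-compatible group inclusion into $H_p^0$.
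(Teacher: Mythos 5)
Your proposal is correct and takes essentially the same route as the paper's proof, which likewise composes Proposition \ref{prop:indmomap_equiv}, then Proposition \ref{prop:restrict}, then Proposition \ref{prop:NenMGinvariant}, in exactly that order. Your explicit check that equivariance survives the two restriction steps (and that only the infinitesimal inclusion $\mathfrak g\hookrightarrow\mathfrak h_p$ is needed, not a group inclusion into $H_p^0$) merely spells out what the paper dispatches with the remark that equivariance of $(s)$ is inherited by the induced maps.
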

\begin{proof}
	Starting from the given comoment $(s)$ it is possible to construct another comoment $(s^p)$ resorting to proposition \ref{prop:indmomap_equiv}.
	The sought comoment descends from $(s^p)$ via the consecutive application of propositions \ref{prop:restrict} and \ref{prop:NenMGinvariant}
	\begin{center}
	\begin{tikzcd}[column sep=large]
		H \ar[symbol=\action]{r} & (N,\eta)
		& & \mathfrak{h} \ar["(s)"]{r} & L_{\infty}(N,\eta)
		\\
		H_p \ar[symbol=\action]{r} \ar[hook]{u}& (N,\iota_{v_p}\eta)  
		& & \mathfrak{h}_p\ar[hook]{u} \ar[dashed,"(s^p)"]{r}[swap]{Prop. \ref{prop:indmomap_equiv}} & L_{\infty}(N,\iota_{v_p}\eta) \ar[equal]{d} 
		\\
		G \ar[symbol=\action]{r} \ar[hook]{u}& (N,\iota_{v_p}\eta)   \ar[equal]{u}
		& & \mathfrak{h}\ar[hook]{u} \ar[dashed,"Prop. \ref{prop:restrict}"']{r} & L_{\infty}(N,\iota_{v_p}\eta) \ar["j^\ast"]{d} 
		\\
		G \ar[symbol=\action]{r} \ar[equal]{u}& (M,\omega =j^\ast \iota_{v_p}\eta) \ar[hook]{u}  
		& & \mathfrak{g}\ar[equal]{u} \ar[dashed,"Prop. \ref{prop:NenMGinvariant}"']{r}& L_{\infty}(M,\omega)  
	\end{tikzcd}
	\end{center}
	together with the observation that if the starting comoment $(s)$ is equivariant then the induced maps are such.
\end{proof}

\subsection{The action of $SO(n)$ on $S^n$}\label{subsecson}
The goal of this section is to give an explicit construction for the comoment of the action $SO(n) \action S^n$ by resorting to Corollary \ref{cor:inducedmachinery}.

\vspace{1ex}
In what follows, we will consider the standard $SO(n+1)$-invariant embedding $ j: S^n \to \mathbb{R}^{n+1}$ of $S^n$ as the sphere with unit radius
and consider the linear action of the group $SO(n)$ on $\mathbb{R}^{n+1}$ as the subgroup of special orthogonal linear transformations fixing the axis $x^0$.
\\
In~$\mathbb{R}^{n+1}$ we consider the standard coordinates~$x = (x^0,\dots,x^n)$ and the corresponding volume form~$\Vol = d x^0\wedge\dots \wedge dx^{n}$. Furthermore, we will make use of the following notation  
\begin{displaymath}
r = \sqrt{(x^1)^2 + \dots + (x^n)^2} \qquad R = \sqrt{(x^0)^2 + r^2} .
\end{displaymath}

\vspace{1ex}
Recall that the volume form on the unit sphere embedded in $\mathbb{R}^{n+1}$ is given by 
$$ \omega = j^\ast \iota_E ~\Vol$$
where $E$ is the Euler vector field.
$E$ can be seen as the fundamental vector field of the action
\begin{align*}
	\vartheta: \mathbb{R} \times \mathbb{R}^{n+1}
	&\to \mathbb{R}^{n+1}\\
	(\lambda,x)&\mapsto e^{\lambda} x
\end{align*} 
of $\mathbb{R}$ on the Euclidean space via dilations,
that is the linear action generated by the identity matrix $\id_{n+1} \in \mathfrak{gl}(n,\mathbb{R}^{n+1})$, i.e. $ E= v_{\id_{n+1}} = \sum_i x^i\, \partial_i$.

Let us call $H = SO(n)\times\mathbb{R}$ the subgroup of $GL(n,\mathbb{R}^{n+1})$ generated by
\begin{displaymath}
	\mathfrak{h} = 
	\left\lbrace
		\begin{bmatrix} 
			1 & 0 \\ 
			0 & a 
		\end{bmatrix}
		\; \vert \: a \in \mathfrak{so}(n)
	\right\rbrace 
	\oplus \langle \id_{n+1} \rangle \simeq \mathfrak{so}(n)\oplus \mathbb{R}
	~.
\end{displaymath}
The group $H$ acts linearly on $\mathbb{R}^{n+1}$ through the standard infinitesimal action
\begin{center}
	\begin{tikzcd}[column sep= small,row sep=0ex]
		v \colon& \mathfrak{h} 	\arrow[r]
			& \mathfrak{X}(\mathbb{R}^{n+1}) \\
			& A \arrow[r, mapsto]& 	\displaystyle\sum_{i,j} [A]_{i j} ~ x^j \partial_i
	\end{tikzcd}
\end{center}
where $[A]_{i j}$ denotes the $i j$ entry of the matrix $A$.

\begin{lemma}\label{lem:rescaledvolume}
The differential form
\begin{displaymath}
 \eta = \rescaling \,\Vol \in \Omega^{n+1}(\mathbb{R}^{n+1} \setminus \{0\})
 \qquad \text{with} \qquad \rescaling = \dfrac{1}{R^{n+1}}
\end{displaymath}	
is multisymplectic on $N = (\mathbb{R}^{n+1} \setminus \{0\})$, invariant under the action $H \action N$ and restricts to the standard volume form on the unit sphere.
\end{lemma}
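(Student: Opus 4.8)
The plan is to verify the three asserted properties in turn, treating the multisymplecticity as immediate and concentrating the real work on invariance under dilations. Since $\eta = \rescaling\,\Vol$ is a form of top degree $n+1$ on the $(n+1)$-dimensional manifold $N$, and since $\rescaling = R^{-(n+1)}$ is strictly positive away from the origin, $\eta$ is a nowhere-vanishing volume form on $N$. It is therefore closed for degree reasons and non-degenerate, because $v \mapsto \iota_v \eta$ is injective for any volume form; hence $\eta$ is multisymplectic, with no computation required.

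For the $H$-invariance, since $H = SO(n)\times\mathbb{R}$ is connected it suffices to check that $\mathcal{L}_{v_A}\eta = 0$ for $A$ ranging over a set of generators of $\mathfrak{h} \simeq \mathfrak{so}(n)\oplus\mathbb{R}$. The $\mathfrak{so}(n)$-directions are handled at once: their flows are rotations fixing the $x^0$-axis, so they preserve the orientation and the Euclidean radius $R$, and thus leave both $\Vol$ and $\rescaling$ (which depends on $R$ alone) invariant. The only substantial generator is $\id_{n+1}$, whose fundamental vector field is the Euler field $E = \sum_i x^i\partial_i$. Here I would compute $\mathcal{L}_E\Vol = (n+1)\Vol$ and, using that $R$ is homogeneous of degree one so that $E(R)=R$, that $\mathcal{L}_E\rescaling = E(\rescaling) = -(n+1)\rescaling$. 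By the Leibniz rule these two contributions cancel, giving $\mathcal{L}_E\eta = 0$. Conceptually this is the entire point of the rescaling: $\rescaling$ is chosen homogeneous of degree $-(n+1)$ precisely to offset the degree $+(n+1)$ scaling of the volume form under the dilation flow, rendering $\eta$ homogeneous of degree zero.

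Finally, the phrase \emph{restricts to the standard volume form} must be read through the induced-comoment construction of Corollary \ref{cor:inducedmachinery}, i.e.\ as the claim $j^\ast\iota_E\eta = \omega$, where $\omega = j^\ast\iota_E\Vol$ is the volume form of the unit sphere and $\id_{n+1}\in Z_1(\mathfrak{h})$ generates $E$; indeed a literal pullback of the $(n+1)$-form $\eta$ to the $n$-dimensional sphere vanishes, so the contraction is essential. Since $\iota_E\eta = \rescaling\,\iota_E\Vol$ and the unit sphere is the level set $\{R=1\}$, on which $\rescaling = 1$, I obtain $j^\ast\iota_E\eta = (j^\ast\rescaling)\,j^\ast\iota_E\Vol = j^\ast\iota_E\Vol = \omega$, as required.

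The calculations are routine; the one step demanding care is the homogeneity bookkeeping in the dilation direction, where one must confirm that the exponent $-(n+1)$ in $\rescaling$ is exactly the value cancelling the Jacobian factor of the dilation flow. I expect this cancellation to be the crux of the argument, with the multisymplecticity and the restriction to the sphere being essentially immediate once it is in hand.
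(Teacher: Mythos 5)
Your proof is correct and follows essentially the same route as the paper's: multisymplecticity is immediate because $\rescaling$ never vanishes, $SO(n)$-invariance holds because $\rescaling$ depends only on $R$, and the dilation direction is settled by the cancellation of $\mathcal{L}_E\rescaling=-(n+1)\rescaling$ against $\mathcal{L}_E\Vol=(n+1)\Vol$. Your explicit reading of the restriction claim as $j^\ast\iota_E\eta=\omega$ (via $j^\ast\rescaling=1$, since a literal pullback of an $(n+1)$-form to $S^n$ vanishes) is exactly how the lemma is used in Proposition \ref{Prop:SonSn}, and is if anything slightly more precise than the paper's one-line remark.
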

\begin{proof}
	Multisymplecticity follows from the closure and non degeneracy of $\Vol$ together with the property that $\rescaling$ never vanishes.
	\\ 
	The form is clearly $SO(n)$-invariant because $\rescaling$ depends only on $R$. The $H$-invariance follows from the invariance along the Euler vector field:
$$
	\mathcal{L}_E (\rescaling~\Vol) = (\mathcal{L}_E \rescaling + (n+1) \rescaling ) \Vol
$$
and
$$
	\mathcal{L}_E \rescaling = \sum_i x^i \dfrac{\partial}{\partial x^i} R^{-(n+1)}	
	= -\dfrac{n+1}{2}\sum_i 2 (x^i)^2 R^{-(n+1)-1} =-(n+1) \rescaling	
$$
Finally, $\eta$ restricts to the volume form on $S^n$ because $j^\ast \rescaling = 1$.
\end{proof}

The function $\rescaling \in C^\infty(\mathbb{R}^{n+1}\setminus\{0\})$ is precisely the scaling factor that makes the Euclidean volume invariant with respect to the extended group $SO(n+1)\times\mathbb{R}$.
The problem to find explicitly a comoment:
\begin{displaymath}
s \colon \mathfrak{h} \rightarrow L^{\infty} \left(\mathbb{R}^{n+1}\setminus\{0\},\eta = \rescaling \, \Vol\right)
\end{displaymath}
can be solved by exhibiting an $H$-invariant primitive of  the rescaled volume $\eta$ and then resorting to Lemma \ref{lem:extexact}.

\begin{lemma}\label{lem:uglyprimitive}
	The differential (n+1)-form $\eta$ admits an $H$-invariant potential $n$-form: 
	\begin{displaymath}
		\beta = (\hat{\varphi}~x^0)~dx^1\wedge\dots\wedge dx^n
	\end{displaymath}	
where $\hat{\varphi}\in C^\infty (\mathbb{R}^{n+1}\setminus{0})$ depends only on the cylindrical coordinates $(x_0,r)$ and it is given by
	\begin{equation}\label{eq:uglyprimitive}
	\hat{\varphi}(x^0,r)  = 	
	\begin{cases}
		\frac{1}{\left((x^0)^2 + r^2\right)^{\frac{n+1}{2}}}
		\left(x^0 (n+1) - r \arctan\left(\dfrac{x^0}{r}\right)\right)
		&\quad r \neq 0
		\\
		(n+1)\frac{1}{\vert x^0 \vert^n} 
		&\quad r=0,\, x^0 > 0
		\\  
		-(n+1)\frac{1}{\vert x^0 \vert^n} 
		&\quad r=0,\, x^0 < 0
	\end{cases}
	\end{equation}
\end{lemma}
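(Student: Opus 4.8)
The plan is to verify directly that $d\beta = \eta$ by computing the exterior derivative of the proposed potential $\beta = (\hat\varphi\, x^0)\, dx^1 \wedge \dots \wedge dx^n$. Since $\beta$ contains only $dx^1, \dots, dx^n$, the only surviving term in $d\beta$ is the one differentiating the coefficient in the $x^0$-direction:
\[
d\beta = \frac{\partial}{\partial x^0}\!\left(\hat\varphi\, x^0\right) dx^0 \wedge dx^1 \wedge \dots \wedge dx^n = \frac{\partial}{\partial x^0}\!\left(\hat\varphi\, x^0\right) \Vol.
\]
So the whole statement reduces to checking the scalar identity $\frac{\partial}{\partial x^0}(\hat\varphi\, x^0) = \rescaling = R^{-(n+1)}$ on the region $r \neq 0$, and then separately handling the behaviour along the axis $r = 0$.

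First I would treat the generic case $r \neq 0$. Writing $R^2 = (x^0)^2 + r^2$ and treating $r$ as constant when differentiating in $x^0$, I would differentiate the product $\hat\varphi\, x^0$ where $\hat\varphi = R^{-(n+1)}\big(x^0(n+1) - r\arctan(x^0/r)\big)$. The two nontrivial derivatives are $\frac{\partial}{\partial x^0}R^{-(n+1)} = -(n+1)x^0 R^{-(n+3)}$ and $\frac{\partial}{\partial x^0}\arctan(x^0/r) = \frac{r}{R^2}$. After expanding and collecting terms, the contributions involving $\arctan$ must cancel and the algebraic remainder should simplify to exactly $R^{-(n+1)}$; this is the routine but sign-sensitive computation at the heart of the lemma. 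The specific form of $\hat\varphi$ is clearly engineered so that the $\arctan$ terms cancel, which is what motivated its derivation (presumably by integrating $R^{-(n+1)}$ in $x^0$ and separating the elementary part from the $\arctan$ primitive).

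The $H$-invariance of $\beta$ follows from the fact that $\eta$ is $H$-invariant (Lemma \ref{lem:rescaledvolume}) together with a symmetry argument: $\hat\varphi$ depends only on the cylindrical coordinates $(x^0, r)$, which are $SO(n)$-invariant, and $dx^1 \wedge \dots \wedge dx^n$ is the pullback of the rotation-invariant volume on the $x^1,\dots,x^n$ hyperplane, so $\beta$ is $SO(n)$-invariant. For the dilation generator $E$, one checks $\mathcal{L}_E \beta = \eta$'s primitive is compatible with the scaling, or more cleanly observes that since $d\beta = \eta$ and $\mathcal L_E \eta = 0$, the failure $\mathcal L_E \beta$ is $d$-closed of top-minus-one degree; the explicit homogeneity degrees of $\hat\varphi\, x^0$ (degree $-n$) and $dx^1\wedge\dots\wedge dx^n$ (degree $0$) give $\mathcal L_E \beta = -n\,\beta + \iota_E d\beta$-type bookkeeping that I would verify matches invariance.

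\textbf{The main obstacle is the axis $r = 0$}, where $\hat\varphi$ is defined piecewise and the $\arctan(x^0/r)$ term is singular. I would need to show that $\beta$ extends to a smooth (or at least $C^1$) form across $\{r = 0\} \setminus \{0\}$ so that the global identity $d\beta = \eta$ holds there as well. The strategy is to take the limit $r \to 0^+$ of the $r \neq 0$ expression, using $\arctan(x^0/r) \to \pm\pi/2$ (sign depending on $\mathrm{sgn}(x^0)$) and checking that $r\arctan(x^0/r) \to 0$, so that $\hat\varphi \to (n+1)/|x^0|^{n}$ with the correct sign — matching the piecewise definition in \eqref{eq:uglyprimitive}. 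The delicate point is continuity of the \emph{derivative} $\partial_{x^0}(\hat\varphi\, x^0)$ across the axis, which must equal $|x^0|^{-(n+1)} = R^{-(n+1)}|_{r=0}$; I expect this requires a careful Taylor expansion in $r$ to confirm the $\arctan$-derived terms vanish in the limit and do not obstruct smoothness, after which the identity $d\beta = \eta$ extends by continuity to all of $N \setminus \{0\}$.
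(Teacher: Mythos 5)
Your reduction of the lemma to the scalar identity $\partial_{x^0}\bigl(\hat\varphi\,x^0\bigr)=\rescaling=R^{-(n+1)}$ on $\{r\neq 0\}$ is correct (only the $x^0$-derivative survives in $d\beta$), but the step you defer as ``routine but sign-sensitive'' is the entire content of the verification, and it fails: the $\arctan$ terms do \emph{not} cancel. Take $n=1$ and $r=1$, so that $\hat\varphi\,x^0=\frac{2(x^0)^2-x^0\arctan(x^0)}{1+(x^0)^2}$; a direct computation gives
\begin{displaymath}
\partial_{x^0}\bigl(\hat\varphi\,x^0\bigr)
=\frac{3x^0+\bigl((x^0)^2-1\bigr)\arctan(x^0)}{\bigl(1+(x^0)^2\bigr)^2},
\end{displaymath}
which at $x^0=0$ equals $0$, not the required $R^{-2}=1$. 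Structurally this had to happen: the genuine $x^0$-primitive of $R^{-(n+1)}$ is $\int_0^{x^0}(s^2+r^2)^{-\frac{n+1}{2}}\,ds=r^{-n}F(x^0/r)$ with $F(u)=\int_0^u(1+s^2)^{-\frac{n+1}{2}}\,ds$, which is an $\arctan$ only for $n=1$, so no formula of the printed shape can satisfy your identity for general $n$. Moreover, this true primitive behaves like $F(\pm\infty)\,r^{-n}$ as $r\to 0$, so the limit you plan to take across the axis diverges instead of reproducing the finite piecewise values in \eqref{eq:uglyprimitive}; your axis analysis cannot be completed either.

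Your invariance bookkeeping also contains weight errors that hide a genuine obstruction: under $\mathcal L_E$ each $dx^i$ has weight $1$, so $dx^1\wedge\dots\wedge dx^n$ has weight $n$ (not $0$), while the printed $\hat\varphi\,x^0$ is homogeneous of degree $1-n$ (not $-n$), whence $\mathcal L_E\beta=\beta\neq 0$ for the stated $\beta$. Worse, no choice of primitive can repair this: for any $\beta$ with $d\beta=\eta$ one has $\mathcal L_E\beta=\iota_E\eta+d\,\iota_E\beta$, and $\iota_E\eta=R^{-(n+1)}\iota_E\,\Vol$ is closed and restricts to the volume form $\omega$ on $S^n$, hence represents the generator of $H^n(\mathbb R^{n+1}\setminus\{0\})$; therefore $\mathcal L_E\beta$ is never zero and an $E$-invariant (so in particular $H$-invariant) primitive of $\eta$ does not exist. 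For comparison, the paper's own proof runs in the opposite direction to yours: it posits the ansatz $\beta=\iota_{x^0\partial_0}\varphi\,\eta$ and \emph{derives} $\varphi$ from the system $\mathcal L_E\varphi=0$, $d\beta=\eta$; but its displayed ODE drops the factor $\varphi$ in the term $\varphi\,x^0\,\partial_0\rescaling$, and the resulting solution $\varphi=n+2-(n+1)\frac{r}{x^0}\arctan(x^0/r)$ fails the same direct check (and does not even agree with the $\hat\varphi$ of the statement after multiplying by $\rescaling$). So your instinct to verify the closed formula directly was the right one; had you actually carried out the computation rather than asserting the cancellation, you would have discovered that the formula — and, by the cohomological argument above, the lemma as printed — cannot be proved, and that an explicit comoment for $SO(n)\action S^n$ must be obtained by a different route (e.g.\ allowing a second invariant component proportional to $dx^0\wedge\iota_P\,dx^{1\dots n}$, $P=\sum_{i\geq1}x^i\partial_i$, where the invariance constraint must then be re-examined against the obstruction $[\iota_E\eta]\neq 0$).
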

\begin{proof}
	Let us start from the following ansatz 
		$$\beta =\iota_{(x^0\partial_0)} \varphi(x^0,r) \eta $$
	for a potential $n$-form of the scaled volume $\eta$ as defined \ref{lem:rescaledvolume}.
	At this point, $\varphi$ is an arbitrary smooth function depending only on the cylindrical parameters $(x^0,r)$.
	\\
	Being $x^0 \partial_0$ the fundamental vector field of 
	\begin{displaymath}
		\zeta =
			\begin{bmatrix} 
				1 & 0 \\ 
				0 & 0_n 
			\end{bmatrix}
		\in \mathfrak{gl}(n+1,\mathbb{R})
		,
\end{displaymath}
one gets
\begin{displaymath}
\mathcal{L}_{v_\xi} \beta = \left(
\iota(\cancel{v_{[\xi,\zeta]}}) + \iota({v_\zeta}) \mathcal{L}_{v_\xi}
\right)
\varphi \, \eta = 0
\qquad \forall \xi \in \mathfrak{so}(n),
\end{displaymath}
because the $(n+1)$-form $\varphi \, \eta $ depends only on $(x^0,r)$,
i.e. $\beta$ is $SO(n)$ invariant. On the other hand, one has:
\begin{displaymath}
	\begin{split}
		\mathcal{L}_E \beta &= \left(
		\cancel{\iota_{[\id,\zeta]}} + \iota_{x_0 \partial_0} \mathcal{L}_{E}
		\right)
		\varphi \, \rescaling \, \Vol = 
		\\
		&= \iota_{x_0 \partial_0} \left[
		\left(\mathcal{L}_E \varphi \right)\, \rescaling \, \Vol 
		+ \varphi \cancel{\mathcal{L}_E\,\rescaling\Vol}
		\right]
	\end{split}
\end{displaymath}
and
\begin{displaymath}
	\begin{split}
		\textrm{d} \beta &=
		\left(
		\dfrac{\partial \varphi}{\partial x^0}\,\rescaling\,x^0 +
		\varphi\, x^0 \, \dfrac{\partial \rescaling}{\partial x^0} + 
		\varphi \rescaling \right) \Vol 
		\\
		&= \left( \partial_0 \varphi \, x^0 - (n+1)\dfrac{(x^0)^2}{(r^2 + (x^0)^2)} 
		+ \varphi \right)\, \rescaling\,\Vol 
		~.
	\end{split}
\end{displaymath}
Therefore, in order for $\beta$ to be $G-$invariant primitive of $\omega$, the following conditions on $\varphi$ have to be met:
\begin{displaymath}
	\begin{cases}
		\mathcal{L}_E \varphi = r\,\partial_r\,\varphi + x^0\,\partial_0\,\varphi = 0 
		\\
		x^0\,\partial_0 \varphi   - (n+1)\dfrac{(x^0)^2}{(r^2 + (x^0)^2)} + \varphi = 1
	\end{cases}
\end{displaymath} 
The general solution of this system reads:
\begin{displaymath}
	\varphi(x^0,r) = - \dfrac{r}{x^0}\arctan\left(\dfrac{x^0}{r}\right)(n+1) + n + 2
\end{displaymath}
which is a smooth function defined on 
$\left\{ x \in \mathbb{R}^{n+1} ~\vert x^0 \neq 0, r \neq 0\right\}$.
Recalling that 
$$\lim_{y\to 0}\dfrac{\arctan(y)}{y}=1 \qquad \lim_{y\to \infty}\dfrac{\arctan(y)}{y}=0 ~,$$
one can see that the limits to all the critical points of $\varphi$, except $(x^0,r)=(0,0)$, are finite.
Hence, considering the unique smooth extension $\hat{\varphi}\in C^\infty(\mathbb{R}^{n+1}\setminus\{0\})$ of $\varphi$, given explicitly by equation (\ref{eq:uglyprimitive}), we conclude that 
	\begin{displaymath}
		\beta = (x^0~\hat{\varphi})~dx^1\wedge\dots\wedge dx^n
	\end{displaymath}	
is the sought $H$-invariant primitive.
\end{proof}

\begin{prop}\label{Prop:SonSn}
	A comoment for the action $SO(n) \action \left( S^{n}, \omega\right)$, for $n \geq 2$, is given by
	\begin{align*}
	f_i:\Lambda^i\mathfrak{so}(n)&\to\Omega^{n-1-i}(S^n),\\ 
	q&\mapsto -j^\ast\iota(v_q)(\iota_E \beta).
	\end{align*} 
	where $\beta$ is the primitive given in Lemma \ref{lem:uglyprimitive}.
\end{prop}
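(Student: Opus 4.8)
The plan is to obtain the stated comoment as a direct instance of Corollary~\ref{cor:inducedmachinery}, feeding it the data assembled in this subsection. I would set $G=SO(n)$ acting on $M=S^n$ with the standard volume $\omega$, take as ambient multisymplectic manifold $N=\mathbb{R}^{n+1}\setminus\{0\}$ with the rescaled volume $\eta=\rescaling\,\Vol$, the unit-sphere embedding $j\colon S^n\hookrightarrow N$, and the enlarged group $H=SO(n)\times\mathbb{R}$. The multisymplecticity of $\eta$, its $H$-invariance, and the fact that it restricts to $\omega$ on $S^n$ are precisely the content of Lemma~\ref{lem:rescaledvolume}, so the first bullet points of the Corollary are secured. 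The required $H$-equivariant comoment $s\colon\mathfrak{h}\to L_\infty(N,\eta)$ is then produced by Lemma~\ref{lem:extexact}, since Lemma~\ref{lem:uglyprimitive} furnishes an $H$-invariant potential $\beta$ with $d\beta=\eta$; explicitly $s_m(q)=(-1)^{m-1}\varsigma(m)\,\iota(v_q)\beta$.

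The one substantive verification concerns the choice of Lie-kernel element $p$ in the last hypothesis of the Corollary. I would take $p=\id_{n+1}\in\mathfrak{h}$, the generator of the $\mathbb{R}$-factor whose fundamental vector field is the Euler field $E$, viewed in $\Lambda^1\mathfrak{h}$, so that $k=1$. Then $p\in Z_1(\mathfrak{h})=\mathfrak{h}$ because $\partial$ vanishes identically on $\Lambda^1$; the inclusion $G=SO(n)\subset H_p=H$ holds because $\id_{n+1}$ is central and hence fixed by the adjoint action; and $\omega=j^\ast\iota(v_p)\eta$ follows from $\iota_E(\rescaling\,\Vol)=\rescaling\,\iota_E\Vol$ together with $j^\ast\rescaling=1$ on the unit sphere, giving $j^\ast\iota_E\eta=j^\ast\iota_E\Vol=\omega$.

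With all hypotheses in place, Corollary~\ref{cor:inducedmachinery} yields the equivariant comoment $f_i(q)=(-1)^k\,j^\ast\iota(v_q)\bigl(s_k(p)\bigr)$. Specializing to $k=1$ and using $\varsigma(1)=1$ gives $s_1(p)=\iota_E\beta$, whence $f_i(q)=-\,j^\ast\iota(v_q)(\iota_E\beta)$, exactly the asserted formula. I expect the only genuine care to be the bookkeeping of Koszul signs propagating through Lemma~\ref{lem:extexact} and the chain of reductions underlying Corollary~\ref{cor:inducedmachinery}; since the whole analytic difficulty was already discharged in constructing the invariant primitive of Lemma~\ref{lem:uglyprimitive}, no further search for primitives is required.
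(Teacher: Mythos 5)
Your proposal is correct and follows essentially the same route as the paper: both invoke Corollary~\ref{cor:inducedmachinery} with the data $(N,\eta)=(\mathbb{R}^{n+1}\setminus\{0\},\rescaling\,\Vol)$, $H=SO(n)\times\mathbb{R}$, $p=\id_{n+1}\in Z_1(\mathfrak{h})$, and the $H$-equivariant comoment supplied by Lemma~\ref{lem:extexact} via the invariant primitive of Lemma~\ref{lem:uglyprimitive}. Your write-up merely makes explicit the verifications the paper leaves implicit (centrality of $\id_{n+1}$, $j^\ast\rescaling=1$, and the sign bookkeeping $\varsigma(1)=1$), all of which check out.
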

\begin{proof}
	The statement follows directly from Corollary \ref{cor:inducedmachinery} upon considering

	\begin{align*}
		(N,\eta) = (\mathbb{R}^{n+1},\rescaling\Vol ) \qquad\quad 
		&(M,\omega) = (S^n, j^\ast \iota_E \Vol) 
		\\
		p= \id_{(n+1)} \in Z_1(\mathfrak{h}) \qquad\quad
		& H = SO(n)\times \mathbb{R} = H_E \supset SO(n)		
	\end{align*}
	and noting that an explicit comoment for the $H$-action is given by Lemma \ref{lem:extexact} via employment of the primitive constructed in Lemma \ref{lem:uglyprimitive}.
\end{proof}

\begin{remark}
	Proposition \ref{Prop:SonSn} extends to spheres of arbitrary dimension a similar result given in \cite[Paragraph 8.3.2]{Callies2016} up to dimension $5$.
\end{remark}

\section{Transitive multisymplectic group actions on spheres}\label{sectra}

The goal of this section is to prove the following theorem:

\begin{thm}\label{thm:surprise}
Let $G$ be a compact Lie group acting multisymplectically, transitively and effectively on $S^n$ equipped with the standard volume form, then the action admits a comoment if and only if $n$ is even. 
\end{thm}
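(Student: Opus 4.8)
The plan is to read the statement through Lemma~\ref{lem:core}, which identifies the existence of a comoment with the vanishing of the class $\vartheta_p^*[\omega]\in H^n(G)$ for the orbit map $\vartheta_p\colon G\to S^n,\ g\mapsto g\cdot p$. First I would reduce to connected $G$. Since $S^n$ is connected and $\dim(G^0\cdot p)=\dim G-\dim K=n$, the identity component $G^0$ already acts transitively while still preserving $\omega$. Writing $H^n(G)=\bigoplus_{[g]\in\pi_0(G)}H^n(gG^0)$, each component map $\vartheta_p|_{gG^0}$ differs from $\vartheta_p|_{G^0}$ only by precomposition with an isomorphism onto $G^0$ and postcomposition with the $\omega$-preserving diffeomorphism $L_g$, so $L_g^*[\omega]=[\omega]$ forces every component of $\vartheta_p^*[\omega]$ to equal $(\vartheta_p|_{G^0})^*[\omega]$. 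Hence $\vartheta_p^*[\omega]=0$ iff $(\vartheta_p|_{G^0})^*[\omega]=0$, and we may assume $G$ connected. Setting $K=G_p$ (compact, and connected for $n\ge 2$ by the homotopy sequence of the fibration), the orbit map becomes the projection of the \emph{principal $K$-bundle} $\pi\colon G\to G/K\cong S^n$, and the problem is precisely to decide whether the generator $[\omega]\in H^n(S^n)$ pulls back to zero along $\pi$.

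For even $n$ I would avoid the spectral sequence entirely and use the Euler class. The tangent bundle of a homogeneous space is the associated bundle $TS^n\cong G\times_K(\mathfrak g/\mathfrak k)$, and the pullback of any bundle associated to $\pi$ along $\pi$ itself is canonically trivial (the pullback of a principal bundle to its own total space has the tautological diagonal section). Therefore $\pi^*e(TS^n)=e(\pi^*TS^n)=0$ in $H^n(G)$. On the other hand $\langle e(TS^n),[S^n]\rangle=\chi(S^n)=2\neq 0$ for $n$ even, so $e(TS^n)$ is a nonzero multiple of the generator of $H^n(S^n)$, hence a nonzero multiple of $[\omega]$. Consequently $\pi^*[\omega]=0$ and, by Lemma~\ref{lem:core}, a comoment exists.

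For odd $n$ I want the opposite conclusion $\pi^*[\omega]\neq 0$. In the two-column Serre spectral sequence of $K\to G\to S^n$ the only possibly nonzero differential is $d_n$, and $\pi^*[\omega]=0$ holds iff the transgression $\tau_G\colon H^{n-1}(K)\to H^n(S^n)$ is surjective. Classifying $\pi$ by a map $f\colon S^n\to BK$, naturality of the transgression gives $\tau_G=f^*\circ\tau_{\mathrm{univ}}$ with $\tau_{\mathrm{univ}}\colon H^{n-1}(K)\to H^n(BK)$; since $H^i(S^n)=0$ for $0<i<n$, the map $f^*$ annihilates all decomposable classes, so surjectivity of $\tau_G$ is equivalent to $f^*|_{H^n(BK)}\neq 0$. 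For $K$ compact connected, $H^*(BK;\mathbb R)$ is a polynomial algebra on generators of even degree, whence $H^n(BK;\mathbb R)=0$ for $n$ odd. Thus $\tau_G=0$, the class $[\omega]$ survives to $E_\infty$, and $\pi^*[\omega]\neq 0$: no comoment exists. (As a cross-check one can argue by total Betti numbers: the two-column sequence gives $\dim H^*(G)=2\dim H^*(K)-2\rho$, and since $\dim H^*(G)=2^{\mathrm{rank}\,G}$, $\dim H^*(K)=2^{\mathrm{rank}\,K}$, with $\mathrm{rank}\,K=\mathrm{rank}\,G-1$ for odd spheres by Hopf--Samelson, the sequence is forced to degenerate, $\rho=0$.)

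The step I expect to be the genuine obstacle is the even case. For odd $n$ nonexistence is immediate from $H^n(BK;\mathbb R)=0$, but for even $n$ a bare count only shows that \emph{some} transgression is nonzero, not the specific component $H^{n-1}(K)\to H^0(K)$ that controls $\pi^*[\omega]$; pinning down that component by hand would require a case analysis along the classification of transitive actions. The Euler-class observation above is what resolves this cleanly, exhibiting $[\omega]$ itself (via $TS^n$ and $\chi(S^n)=2$) as a class whose pullback manifestly vanishes. I would finally remark that effectiveness is not actually used in this cohomological argument—only compactness, transitivity and preservation of $\omega$—and dispose of the degenerate case $n=1$ directly, where $SO(2)$ acting on $S^1$ admits no comoment, in agreement with the parity statement.
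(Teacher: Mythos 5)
Your argument is correct, and it takes a genuinely different route from the paper. The paper proves Theorem \ref{thm:surprise} by invoking the Montgomery--Samelson--Borel classification of compact connected effective transitive actions on spheres and checking the finitely many cases: nonexistence for $SU(n)$, $Sp(n)$ and $Spin(9)$ via Leray--Hirsch computations of $H^\bullet(G)$ (Proposition \ref{prop:stiefel}), propagated to the larger groups in the list through Proposition \ref{prop:restrict}, and existence for $SO(2n+1)$ (hence $G_2$) via Proposition \ref{prop:annoying}, which transports the obstruction along $SO(2n)\hookrightarrow SO(2n+1)$ using the isomorphism $H^{2n}(SO(2n+1))\cong H^{2n}(SO(2n))$ and the already-settled non-transitive case. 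You instead work uniformly with the principal $K$-bundle $\pi\colon G\to G/K\cong S^n$: for even $n$, naturality of the Euler class, triviality of $\pi^*TS^n\cong \pi^*(G\times_K \mathfrak{g}/\mathfrak{k})$, and $\chi(S^n)=2$ kill $\pi^*[\omega]$ with no case analysis whatsoever, while for odd $n$ the transgression argument with $H^n(BK;\mathbb{R})=0$ shows $[\omega]$ survives to $E_\infty$. What each buys: your proof is classification-free, does not use effectiveness (as you correctly note), and explicitly handles disconnected $G$, which the paper's appeal to the classification leaves implicit; the paper's case-by-case route, in exchange, produces the concrete per-group inclusions ($SU(n)\subset U(n)\subset SO(2n)$, $SU(4)\subset Spin(7)$, $G_2\subset SO(7)$, etc.) that it reuses for the explicit constructions later in Section \ref{sectra}. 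One small repair is needed in your odd case: the universal transgression $\tau_{\mathrm{univ}}$ is defined only on transgressive elements, and decomposable classes in $H^{n-1}(K)$ need not be universally transgressive (for $K=T^2$ the class $x_1x_1'\in H^2(K)$ already supports a $d_2$ in the universal bundle), so the identity $\tau_G=f^*\circ\tau_{\mathrm{univ}}$ is not literally defined on all of $H^{n-1}(K)$. The fix is one line and uses ingredients you already have: over $S^n$ every class survives to the $E_n$-page, $d_n$ is a derivation whose value on any class of degree $<n-1$ lands in a vanishing group, so $d_n$ annihilates all decomposables; and since $n-1$ is even while $H^\bullet(K;\mathbb{R})$ is an exterior algebra on odd-degree generators ($K$ being connected for $n\geq 2$, by your homotopy-sequence remark), \emph{every} class in $H^{n-1}(K)$ is decomposable, whence $\tau_G=0$ directly, without mentioning $BK$ at all. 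Your disposal of $n=1$ and the Betti-number degeneration cross-check are both sound.
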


\begin{proof}[Proof of Theorem \ref{thm:surprise}] 
The effective transitive compact connected group actions on spheres have been classified \cite{MR0008817, MR0034768,MR0029915}, for an overview of the results cf. \cite{MR2371700}. Essentially, one has the following list, where $G/H=S^n$ means that $G$ acts transitively on $S^n$ with isotropy $H$.
\begin{itemize}
\item $SO(n)/SO(n-1)=S^{n-1}$
\item $SU(n)/SU(n-1)=U(n)/U(n-1)=S^{2n-1}$
\item $Sp(n)Sp(1)/Sp(n-1)Sp(1)=Sp(n)U(1)/Sp(n-1)U(1)=Sp(n)/Sp(n-1)=S^{4n-1}$
\item $G_2/SU(3)=S^6$
\item $Spin(7)/G_2=S^7$
\item $Spin(9)/Spin(7)=S^{15}$.
\end{itemize}
From Propostion \ref{prop:restrict} we know that if $G$ admits a comoment and $\tilde G\subset G$ is a subgroup, then $\tilde G$ admits a comoment. 
Thus it will suffice to prove the following statements:
\begin{enumerate}
\item The action of $SU(n)$ on $S^{2n-1}$ does not admit a comoment. As $SU(n)\subset U(n)\subset SO(2n)$, from this we will automatically get the statements, that $U(n)$ and $SO(2n)$ do not admit a comoment when acting on $S^{2n-1}$. Moreover, as $SU(4)\subset Spin(7)$, this implies that $Spin(7)$ does not admit a comoment, when acting on $S^7$.
\item The action of $Sp(n)$ on $S^{4n-1}$ does not admit a comoment. Hence, neither $Sp(n)U(1)$ nor $Sp(n)Sp(1)$ do. 
\item Spin(9) does not admit a comoment, when acting on $S^{15}$.

\item $SO(2n+1)$ has a comoment when acting on $S^{2n}$. As $G_2\subset SO(7)$, this implies that $G_2$ admits a comoment when acting on $S^6$.

\end{enumerate}
Hence, we can prove the theorem by using Lemma \ref{lem:core}, after proving the following  Proposition \ref{prop:stiefel} and Proposition \ref{prop:annoying}. 
\end{proof}

\begin{prop}\label{prop:stiefel}
Let $\omega_n$ be the volume of $S^n$ and $N$ the north pole.
\begin{itemize}
\item Let $\vartheta_N:SU(n)\to S^{2n-1}$. Then $\vartheta_N^*[\omega_{2n-1}]\neq 0$.
\item Let $\vartheta_N:Sp(n)\to S^{4n-1}$. Then $\vartheta_N^*[\omega_{4n-1}]\neq 0$.
\item Let $\vartheta_N:Spin(9)\to S^{15}$. Then $\vartheta_N^*[\omega_{15}]\neq 0$.
\end{itemize}
\end{prop}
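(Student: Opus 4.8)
The plan is to read each orbit map $\vartheta_N\colon G\to S^m$ (with $m=2n-1,\,4n-1,\,15$ in the three cases) as the projection of the homogeneous fibration
$$
H\hookrightarrow G\xrightarrow{\ \vartheta_N\ } G/H=S^m,
$$
where $H$ is the isotropy subgroup taken from the classification, namely $SU(n-1)$, $Sp(n-1)$ and $Spin(7)$ respectively. Since $[\omega_m]$ generates $H^m(S^m;\mathbb R)$, proving $\vartheta_N^*[\omega_m]\neq 0$ is the same as proving that the edge homomorphism $\vartheta_N^*\colon H^m(S^m)\to H^m(G)$ is injective. First I would set up the Serre spectral sequence of this fibration in real (de Rham) cohomology. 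Because the base is a sphere, only the two columns $p=0$ and $p=m$ are nonzero on the $E_2$-page, so the only possibly nonvanishing differential is $d_m\colon E_m^{0,q}\to E_m^{m,\,q-m+1}$, and in particular the survival of the generator of $E_2^{m,0}=H^m(S^m)$ is governed by the transgression $d_m\colon E_m^{0,m-1}\to E_m^{m,0}$.

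The heart of the argument is to show that this spectral sequence collapses at $E_2$. For this I would invoke the real cohomology rings of the groups involved, all of which are exterior algebras on odd primitive generators and depend only on the Lie algebra — so that in particular $H^\bullet(Spin(k);\mathbb R)\cong H^\bullet(SO(k);\mathbb R)$. Explicitly,
$$
H^\bullet(SU(n))=\Lambda(x_3,\dots,x_{2n-1}),\quad
H^\bullet(Sp(n))=\Lambda(x_3,x_7,\dots,x_{4n-1}),\quad
H^\bullet(Spin(9))=\Lambda(x_3,x_7,x_{11},x_{15}),
$$
while the fibers $SU(n-1)$, $Sp(n-1)$, $Spin(7)$ have the same description missing exactly the top generator. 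A direct comparison of Poincaré polynomials then yields $P_G(t)=(1+t^m)\,P_H(t)$ in each of the three cases. Consequently $\dim_{\mathbb R}H^\bullet(G)=\dim_{\mathbb R}\bigl(H^\bullet(S^m)\otimes H^\bullet(H)\bigr)=\dim_{\mathbb R}E_2$; since any nonzero differential would strictly decrease the total dimension while $\dim_{\mathbb R}E_\infty=\dim_{\mathbb R}H^\bullet(G)$ always holds, all differentials must vanish and $E_2=E_\infty$.

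Collapse then makes the edge map $H^m(S^m)=E_2^{m,0}=E_\infty^{m,0}\hookrightarrow H^m(G)$ injective (equivalently, in the Wang sequence of the fibration the connecting map $\delta\colon H^0(H)\to H^m(G)$, which sends $1$ to $\vartheta_N^*[\omega_m]$, has trivial kernel because the monodromy derivation vanishes). Hence $\vartheta_N^*[\omega_m]$ is a nonzero multiple of the top generator $x_m$ of $H^m(G)$, as claimed.

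I expect the only delicate point to be the third case, where one must keep track of the fact that it is the \emph{real} (de Rham) cohomology that is relevant — which is legitimate precisely because $\omega$ is an honest differential form — so that replacing $Spin(9)$ and $Spin(7)$ by $SO(9)$ and $SO(7)$ for cohomological purposes is justified; the integral and mod-$2$ cohomology of the Spin groups is far more intricate and would obscure the argument. Once the identity $P_{Spin(9)}(t)=(1+t^{15})\,P_{Spin(7)}(t)$ is checked, the collapse and the conclusion follow exactly as in the two classical-series cases.
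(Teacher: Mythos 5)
Your proof is correct, but it reaches the conclusion by a genuinely different mechanism than the paper's. The paper settles the $SU(n)$ and $Sp(n)$ cases by citing the calculations in Hatcher's Corollary 4D.3, where Leray--Hirsch applied to $H\hookrightarrow G\to S^m$ shows that the spherical generator pulls back to a generator of $H^\bullet(G)$; for $Spin(9)$ it verifies the Leray--Hirsch hypothesis by hand: since the base $S^{15}$ is $14$-connected, the pair $(Spin(9),Spin(7))$ is $14$-connected, so $H^i(Spin(9))\to H^i(Spin(7))$ is an isomorphism for $i\leq 13$, and $H^\bullet(Spin(7);\mathbb{R})$ is generated by classes in degrees $3,7,11\leq 13$. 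You instead take the full exterior-algebra descriptions of the real cohomology of $G$ and of the fiber $H$ as input, force collapse of the Serre spectral sequence over the sphere by the dimension count $P_G(t)=(1+t^m)\,P_H(t)$, and conclude via injectivity of the edge homomorphism --- in effect you prove the Leray--Hirsch conclusion by counting dimensions rather than by exhibiting extensions of fiber classes. The trade-off: the paper's route needs only low-degree information (connectivity of the pair plus generation of the fiber cohomology in low degrees), whereas yours needs the complete Poincaré polynomials; in exchange, your argument is uniform across all three cases and dispenses with connectivity estimates. Two hygiene points you handle correctly but that deserve emphasis: the ring inputs must be established independently of these very fibrations to avoid circularity (Hopf's theorem, equivalently Lie algebra cohomology of compact connected groups, supplies them, consistent with Theorem \ref{thm:son-cohomology}), and the replacement of $Spin(9)$, $Spin(7)$ by $SO(9)$, $SO(7)$ is legitimate over $\mathbb{R}$ exactly as you argue, since compact connected groups with the same Lie algebra have the same de Rham cohomology; finally, the Serre spectral sequence with constant coefficients applies because $S^m$ is simply connected in all cases at hand ($m\geq 3$).
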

\begin{proof}
For the first two cases we refer to the calculations in the proof of \cite[Corollary 4D.3]{MR1867354}, where it is shown that the generator of the sphere is turned into a generator of $SU(n)$ (resp. $Sp(n)$) via pullback (in our case via $\vartheta_N$). For the third case we can proceed analogously: The fiber bundle $Spin(7)\to Spin(9)\to S^{15}$ has a 14-connected base, so the pair $(Spin(9),Spin(7))$ is 14-connected and the maps $H^i(Spin(9))\to H^i(Spin(7))$ are isomorphisms for $i\leq 13$. As $H^\bullet(Spin(7))$ is generated by (wedges of) elements in these degrees, this implies that the Leray-Hirsch-theorem \cite[Theorem 4D.1]{MR1867354} can be applied. This means that $\vartheta_N^*[\omega_{15}]$ is a generator and thus nonzero in $H^{15}(Spin(9))$.
\end{proof}

In the case of $SO(2n+1)$ the Leray-Hirsch theorem can not be applied, as $SO(2n)$ has a class in degree $2n-1$ which does not come from any class in $SO(2n+1)$, (cf. e.g. \cite[Theorem 3D.4]{MR1867354} or Appendix \ref{appendix:details_son}). In fact we have the following:
\begin{prop}\label{prop:annoying}
Let $\omega_{2n}$ be the volume of $S^{2n}$ and $N$ the north pole. Let $\vartheta_N:SO(2n+1)\to S^{2n}$. Then $\vartheta_N^*[\omega_{2n}]= 0$.
\end{prop}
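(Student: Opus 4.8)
The plan is to realise the orbit map $\vartheta_N\colon SO(2n+1)\to S^{2n}$, $g\mapsto gN$, as the projection of the homogeneous fibration
$$ SO(2n)\hookrightarrow SO(2n+1)\xrightarrow{\ \vartheta_N\ } SO(2n+1)/SO(2n)=S^{2n}, $$
whose fibre is the stabiliser $SO(2n)$ of the north pole $N$, and then to run the real Serre spectral sequence of this fibration. Since $S^{2n}$ is simply connected for $n\geq 1$, there are no local-coefficient issues and $E_2^{p,q}=H^p(S^{2n};\mathbb{R})\otimes H^q(SO(2n);\mathbb{R})$, which is concentrated in the two columns $p\in\{0,2n\}$. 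The pullback $\vartheta_N^*\colon H^{2n}(S^{2n})\to H^{2n}(SO(2n+1))$ is precisely the edge homomorphism, factoring as $H^{2n}(S^{2n})=E_2^{2n,0}\twoheadrightarrow E_\infty^{2n,0}\hookrightarrow H^{2n}(SO(2n+1))$. Hence $\vartheta_N^*[\omega_{2n}]=0$ \emph{if and only if} the generator $[\omega_{2n}]\in E_2^{2n,0}$ fails to survive to $E_\infty$; since the only differential landing in the column $p=2n$ is the transgression $d_{2n}\colon E_{2n}^{0,2n-1}\to E_{2n}^{2n,0}$, this amounts to showing $[\omega_{2n}]\in\operatorname{im}d_{2n}$.

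Next I would recall the real cohomology of the orthogonal groups (cf. \cite{MR1867354}): $H^\bullet(SO(2n+1);\mathbb{R})=\Lambda(x_3,x_7,\dots,x_{4n-1})$ is an exterior algebra on $n$ generators, while $H^\bullet(SO(2n);\mathbb{R})=\Lambda(p_1,\dots,p_{n-1},e)$ is exterior on the $n-1$ Pontryagin-type generators in degrees $3,7,\dots,4n-5$ together with the Euler class $e$ in degree $2n-1$. In particular $E_2^{0,2n-1}=H^{2n-1}(SO(2n);\mathbb{R})$ contains $e$, and $E_2^{2n,0}=H^{2n}(S^{2n})$ is one-dimensional, spanned by $[\omega_{2n}]$. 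The generators $p_1,\dots,p_{n-1}$ are restrictions along $SO(2n)\hookrightarrow SO(2n+1)$ of the corresponding generators of the total space, hence lie in the image of the fibre-restriction edge map and are permanent cycles; as $d_{2n}$ is a derivation, it is therefore entirely determined by its value $d_{2n}(e)=c\,[\omega_{2n}]$ on the Euler class.

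It remains to show $c\neq 0$, which is the crux of the argument. I would settle this by a rank count over $\mathbb{R}$. One has $\dim_{\mathbb{R}} H^\bullet(SO(m);\mathbb{R})=2^{\lfloor m/2\rfloor}$, so the fibre and total space both have total Betti number $2^n$, while $\dim E_2=2\cdot 2^n=2^{n+1}$. If $c=0$, then $d_{2n}$ vanishes on all generators and hence identically, giving $E_\infty=E_2$ of total dimension $2^{n+1}\neq 2^n=\dim H^\bullet(SO(2n+1);\mathbb{R})$ --- a contradiction. Thus $c\neq 0$, and by the Leibniz rule $d_{2n}$ pairs the $2^{n-1}$ classes $eP$ isomorphically with the classes $[\omega_{2n}]P$ (where $P$ ranges over monomials in the $p_i$, noting $[\omega_{2n}]^2=0$), accounting for exactly the $2^n$ classes that must die. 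In particular, taking $P=1$, the class $[\omega_{2n}]$ itself is killed, so $\vartheta_N^*[\omega_{2n}]=0$.

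The main obstacle is exactly the non-vanishing of the transgression $c=d_{2n}(e)$; the rank count above is the most economical way to force it, though the same conclusion can be read off geometrically, since $\vartheta_N$ is the oriented orthonormal frame bundle of $TS^{2n}$ and the transgression of the fibre Euler class is the Euler class $e(TS^{2n})=\chi(S^{2n})\,[\omega_{2n}]=2\,[\omega_{2n}]$, which is nonzero precisely because $\chi(S^{2n})=2$. This is the genuinely even-dimensional input that makes $SO(2n+1)$ behave differently from the odd-sphere cases treated in Proposition \ref{prop:stiefel}, and it is the obstruction foreshadowed by the failure of Leray--Hirsch (see Appendix \ref{appendix:details_son}).
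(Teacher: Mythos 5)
Your proof is correct, but it takes a genuinely different route from the paper's. The paper argues internally to its own machinery: by Lemma \ref{lem:core} the class $i^*\vartheta_N^*[\omega_{2n}]=(\vartheta_N\circ i)^*[\omega_{2n}]$, for $i\colon SO(2n)\hookrightarrow SO(2n+1)$, is exactly the obstruction to a comoment for the restricted $SO(2n)$-action on $S^{2n}$; that action is non-transitive, so a comoment exists by Proposition \ref{prop:intransitive} and the class vanishes in $H^{2n}(SO(2n))$; finally, $i^*\colon H^{2n}(SO(2n+1))\to H^{2n}(SO(2n))$ is an isomorphism (in degree exactly $2n$ no monomial can involve the Euler class, of degree $2n-1$, nor the top generator, of degree $4n-1$, by Theorem \ref{thm:son-cohomology}), so the vanishing transfers upstairs. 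You instead run the Serre spectral sequence of $SO(2n)\to SO(2n+1)\to S^{2n}$, identify $\vartheta_N^*$ with the base edge homomorphism, observe that the generators $p_1,\dots,p_{n-1}$ are permanent cycles as restrictions from the total space, and force $d_{2n}(e)=c\,[\omega_{2n}]$ with $c\neq 0$ by the dimension count $2\cdot 2^n\neq 2^n$; all of these steps check out (your closing geometric remark is also sound, up to normalization: $e(TS^{2n})$ is a nonzero multiple of $[\omega_{2n}]$ because $\chi(S^{2n})=2$, the paper's volume form not being normalized to total volume $1$). What your route buys: it is self-contained algebraic topology, it identifies the genuine even-dimensional mechanism (transgression of the fibre Euler class to $e(TS^{2n})\neq 0$), and it reproves in passing the failure of Leray--Hirsch mentioned before Proposition \ref{prop:annoying}. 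What the paper's route buys: it is considerably shorter, recycles the comoment existence already established for the non-transitive subgroup, and needs only additive knowledge of the two cohomologies in a single degree rather than the multiplicative spectral-sequence bookkeeping. Both arguments ultimately hinge on the same structural fact from Theorem \ref{thm:son-cohomology}: near degree $2n$, the Euler class is the only class of $SO(2n)$ invisible from $SO(2n+1)$.
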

\begin{proof}
Let $i:SO(2n)\to SO(2n+1)$ be the inclusion. The cohomologies of $SO(2n+1)$ and $SO(2n)$ are isomorphic up to degree $2n$ and $i^*:H^{2n}(SO(2n+1))\to H^{2n}(SO(2n))$ is an isomorphism. The class $[i^*\vartheta_N^*\omega_{2n}]$ is the obstruction against a comoment for the $SO(2n)$-action on $S^{2n}$. We know from Proposition \ref{lem:core}, that this action admits a comoment, i.e. $[i^*\vartheta_N^*\omega_{2n}]=0\in H^{2n}(SO(2n))$. But as $i^*$ is an isomorphism, this implies that $[\vartheta_N^*\omega_{2n}]=0\in H^{2n}(SO(2n+1))$.
\end{proof}

\subsection{Explicit comoments for $SO(n+1)$ on $S^{n}$}\label{subsectra}
Giving an explicit expression for a comoment of $SO(n+1) \action S^{n}$ requires to find iteratively, for $k \in (1,\dots,n-1)$ and  for all $p \in \Lambda^k \mathfrak{so}(n)$, a primitive, denoted as $f_k(p)$, of the closed differential $(n-k)$-form
\begin{equation}\label{eq:comomentasprimitive}
	\mu_k (p) = 	-f_{k-1} (\partial p) - \varsigma(k) \iota(v_p) \omega 
\end{equation}
with $f_0 = 0$ and satisfying the following constraint
\begin{equation} \label{eq:comomentconstraints}
	f_n(\partial p ) = - \varsigma(k+1) \iota_{v_p} \omega ~.
\end{equation}

As $H^0(S^n)$ and $H^{n}(S^n)$ are the only non trivial cohomology groups, it is always possible to find primitives of $\mu_k (p)$.
The only thing that could fail, and actually fails when $n$ is odd, is the fulfilment of condition \eqref{eq:comomentconstraints}.
In the latter case, it is however possible to consider an extension of $\mathfrak g$ to a suitable Lie-$n$ algebra and consider \emph{Lie-$n$ homotopy comoment map} instead of our notion of comoments (See \cite{Callies2016} or \cite{Mammadova2019} for the explicit case of $n=4$).

\vspace{1em}
Instead of dealing with the analytical problem of finding explicit potentials for the form $\mu_k (p)$, let us translate the problem in a more algebraic fashion focusing on the particular structure of the Chevalley-Eilenberg complex of $\mathfrak{so}(n)$.
\\
In general, it is fairly easy to express the action of a comoment on boundaries:
\begin{lemma}[Comoments on boundaries]
	Let $v:\mathfrak{g}\to (M,\omega)$ be a multisymplectic infinitesimal action.\\
	Let $F^k: B_k(\mathfrak{g}) \to \Lambda^{k+1}\mathfrak{g}$ such that $\partial \circ F^k = \text{id}_{B^k}$
	,i.e. $F^k$ gives a choice of representative of a primitive for every $k$-boundary of $\mathfrak{g}$.\\
	Then, the function $ 	f_k \colon B_k(\mathfrak{g}) \to\Omega^{n-k}(M)$ defined as
	\begin{displaymath}
		f_k(p) = -\varsigma(k+1) \iota(v_{F^k(p)}) \omega
	\end{displaymath}
	satisfies equation (\ref{eq:fk_hcmm}) defining the $k$-th component for a comoment of the infinitesimal action, for every boundary $p$.
\end{lemma}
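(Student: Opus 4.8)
The plan is to verify Equation \eqref{eq:fk_hcmm} directly for an arbitrary boundary $p\in B_k(\mathfrak g)$, exploiting that a boundary is in particular a cycle. First I would observe that, since $\partial^2=0$, every element of $B_k(\mathfrak g)$ satisfies $\partial p=0$, so $f_{k-1}(\partial p)=f_{k-1}(0)=0$ by linearity. Hence the left-hand side of \eqref{eq:fk_hcmm} vanishes, and the whole statement collapses to establishing the single identity $df_k(p)=-\varsigma(k)\,\iota(v_p)\omega$.

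To compute $df_k(p)$ I would set $P:=F^k(p)\in\Lambda^{k+1}\mathfrak g$, which by hypothesis satisfies $\partial P=p$. By the definition of $f_k$ one then has $df_k(p)=-\varsigma(k+1)\,d\,\iota(v_P)\omega$, so the heart of the argument is to evaluate $d\,\iota(v_P)\omega$. Here I would invoke the multi-Cartan formula of Lemma \ref{lemma:multicartan}, which rewrites $d\,\iota(v_P)\omega$ as a signed combination of $\iota(v_P)d\omega$, of $\iota(v_{\partial P})\omega$, and of several terms each carrying a factor $\mathcal L_{v_\xi}\omega$. Two of these three families die in our setting: the first because $\omega$ is closed, and the last because the action is multisymplectic, i.e.\ $\mathcal L_{v_\xi}\omega=0$ for all $\xi\in\mathfrak g$. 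What survives is exactly $d\,\iota(v_P)\omega=(-1)^{k+1}\iota(v_{\partial P})\omega=(-1)^{k+1}\iota(v_p)\omega$, where the global sign $(-1)^{k+1}$ comes from $\deg P=k+1$, the identification $v_{\partial P}=\partial(v_P)$ holds because $v$ is a Lie algebra homomorphism, and $\partial P=p$ is used in the last equality.

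The remaining step is pure sign bookkeeping. Feeding this back into $df_k(p)=-\varsigma(k+1)\,d\,\iota(v_P)\omega$ gives $df_k(p)=-\varsigma(k+1)(-1)^{k+1}\iota(v_p)\omega$, and the Koszul-sign identity $\varsigma(k+1)=(-1)^{k+1}\varsigma(k)$ --- which follows from $\varsigma(j)=-(-1)^{j(j+1)/2}$ by comparing exponents $\tfrac{(k+1)(k+2)}{2}-\tfrac{k(k+1)}{2}=k+1$ --- collapses the prefactor to $-\varsigma(k)$, yielding precisely $df_k(p)=-\varsigma(k)\,\iota(v_p)\omega$, which is the reduced form of \eqref{eq:fk_hcmm}. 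I expect the main obstacle to be exactly this sign tracking: one must pin down the orientation conventions built into Lemma \ref{lemma:multicartan} (in particular the overall $(-1)^{\deg P}$ and the placement of $\partial$ on multivector fields versus the Chevalley--Eilenberg $\partial$ on $\Lambda^\bullet\mathfrak g$) and reconcile them with the $\varsigma$ factors of the bracket normalization. The underlying geometric content is by contrast transparent: closedness of $\omega$ together with its invariance under the action are exactly the ingredients needed to turn $d\,\iota(v_P)\omega$ into a contraction against the boundary $\partial P=p$.
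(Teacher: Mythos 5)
Your proposal is correct and takes essentially the same route as the paper's proof: both apply the multi-Cartan formula of Lemma \ref{lemma:multicartan} to $d\,\iota(v_{F^k(p)})\omega$, kill the extra terms using $d\omega=0$ and $\mathcal{L}_{v_\xi}\omega=0$, use $\partial F^k(p)=p$ (and $\partial p=0$, so the $f_{k-1}$ term vanishes) to reduce \eqref{eq:fk_hcmm} to $df_k(p)=-\varsigma(k)\,\iota(v_p)\omega$, and close with the sign identity $\varsigma(k+1)=(-1)^{k+1}\varsigma(k)$. Your version merely makes explicit two points the paper leaves implicit, namely the vanishing of $f_{k-1}(\partial p)$ via $\partial^2=0$ and the compatibility $v_{\partial P}=\partial(v_P)$ coming from $v$ being a Lie algebra homomorphism.
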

\begin{proof}
	It is a straightforward application of Lemma \ref{lemma:multicartan} together with the multisymplecticity of the infinitesimal action:
	\begin{displaymath}
		\begin{split}
		d f_k (p) & = 
		-\varsigma(k+1) d \iota(v_{F^k(p)}) \omega =
		-(-1)^{k+1}\varsigma(k+1) \iota(v_{\partial F^k(p)}) \omega =\\
		& = \cancel{- f_{k-1}(\partial p)} - \varsigma(k) \iota_{v_p} \omega.
		\end{split}
	\end{displaymath}
\end{proof}
\begin{remark}
Note that the costraint given by equation (\ref{eq:comomentconstraints}) is precisely the requirement that action on boundaries of the highest component $f_n$ of the comoment $(f)$ is independent from the choice of $F^n$.
\end{remark}
In other words, finding the action of the component $f_k$ of comoment $(f)$ on boundaries is tantamount to finding a function $F^k: B_k(\mathfrak{g}) \to \Lambda^{k+1}\mathfrak{g}$ mapping a boundary $p$ to a specific primitive $q$.
\\
Note that this is nothing more than replacing the problem of finding a potential of an exact differential form to the one of finding a primitive of a boundary in the  CE-complex.

It follows from the previous lemma that the $k$-th component of the comoment is completely determined by its action on boundaries when $H_k(\mathfrak{g})=0$:
\begin{cor}
	Consider a Lie algebra with $H_k(\mathfrak{g})=0$ and fix a choice of representatives $F^k$ and $F^{k-1}$ as before.
	Then, the function $ 	f^k \colon \Lambda^k \mathfrak{g} \to\Omega^{n-k}(M)$ defined as
	\begin{displaymath}
	f_k(q) = \varsigma(k+1)\iota(v_{F^k(F^{k-1}(\partial q) -q)})\omega
	\end{displaymath}
	satisfies equation (\ref{eq:fk_hcmm}) for every chain $q \in \Lambda^k\mathfrak{g}$.
\end{cor}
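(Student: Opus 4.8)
The plan is to verify directly that the proposed $f_k$ satisfies the comoment equation \eqref{eq:fk_hcmm}, reducing everything to the preceding lemma on comoments on boundaries together with the multi-Cartan formula. The first—and conceptually decisive—step is to check that $f_k$ is even well defined on all of $\Lambda^k\mathfrak{g}$. Given $q\in\Lambda^k\mathfrak{g}$, the element $F^{k-1}(\partial q)-q$ lands in the domain $B_k(\mathfrak{g})$ of $F^k$ precisely because
\begin{displaymath}
\partial\bigl(F^{k-1}(\partial q)-q\bigr)=\partial F^{k-1}(\partial q)-\partial q=\partial q-\partial q=0,
\end{displaymath}
so it is a cycle, and the hypothesis $H_k(\mathfrak{g})=0$ forces $Z_k(\mathfrak{g})=B_k(\mathfrak{g})$; thus $F^k$ may legitimately be applied. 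This is the only place where $H_k(\mathfrak{g})=0$ enters, and I expect it to be the main point to get right, the remainder being sign bookkeeping.

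Next I would set $w:=F^k\bigl(F^{k-1}(\partial q)-q\bigr)\in\Lambda^{k+1}\mathfrak{g}$, so that $\partial w=F^{k-1}(\partial q)-q$ by the defining property $\partial\circ F^k=\mathrm{id}_{B_k}$, and $f_k(q)=\varsigma(k+1)\,\iota(v_w)\omega$. Applying the multi-Cartan formula of Lemma \ref{lemma:multicartan} to the multivector $w$, and using that $\omega$ is closed and invariant (so that the $d\omega$ and $\mathcal{L}\omega$ terms drop), yields $d\,\iota(v_w)\omega=(-1)^{k+1}\iota(v_{\partial w})\omega$. Substituting $\partial w=F^{k-1}(\partial q)-q$ and splitting by linearity of $v$ and $\iota$ gives
\begin{displaymath}
d f_k(q)=\varsigma(k+1)(-1)^{k+1}\Bigl(\iota\bigl(v_{F^{k-1}(\partial q)}\bigr)\omega-\iota(v_q)\omega\Bigr).
\end{displaymath}

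The remaining work is purely sign manipulation. Using $\varsigma(k)=-(-1)^{k(k+1)/2}$ one verifies the identity $\varsigma(k+1)(-1)^{k+1}=\varsigma(k)$, which reduces the display to $d f_k(q)=\varsigma(k)\,\iota\bigl(v_{F^{k-1}(\partial q)}\bigr)\omega-\varsigma(k)\,\iota(v_q)\omega$. Finally, since $\partial q\in B_{k-1}(\mathfrak{g})$, the previous lemma on comoments on boundaries applies to the $(k-1)$-st component and gives $f_{k-1}(\partial q)=-\varsigma(k)\,\iota\bigl(v_{F^{k-1}(\partial q)}\bigr)\omega$; hence the first term equals $-f_{k-1}(\partial q)$ and we obtain exactly
\begin{displaymath}
d f_k(q)=-f_{k-1}(\partial q)-\varsigma(k)\,\iota(v_q)\omega,
\end{displaymath}
which is \eqref{eq:fk_hcmm}. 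I would emphasize that there is no circularity: $f_{k-1}$ is evaluated only on the boundary $\partial q$, where its value is pinned down by the boundary lemma, so the argument does not presuppose the corollary at index $k-1$.
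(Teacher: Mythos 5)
Your proof is correct and follows essentially the same route as the paper's: both identify $F^{k-1}(\partial q)-q$ as a cycle made exact by $H_k(\mathfrak{g})=0$, apply the multi-Cartan formula to $F^k$ of it, and invoke the boundary lemma for $f_{k-1}(\partial q)$, with the same sign identity $\varsigma(k+1)(-1)^{k+1}=\varsigma(k)$. The only difference is presentational (you compute $df_k(q)$ forward while the paper works from the right-hand side backward), and your explicit remarks on well-definedness and non-circularity are sound additions rather than deviations.
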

\begin{proof}
	Consider a function $f_{k-1}$ defined through $F^{k-1}$ according to the previous lemma.
	For every cycle $q \in \Lambda^k\mathfrak{g}$ we get
	\begin{displaymath}
		- f_{k-1}(\partial q) -\varsigma(k)\iota_{v_q}\omega =
		\varsigma(k)[\iota(v_{F^{k-1}(\partial q)}) - \iota(v_{q})] \omega=
		\varsigma(k)\iota( v_r )\omega		
	\end{displaymath}
	where $r =(F^{k-1}(\partial q) - q)$ is closed, hence exact.
	Again from Lemma \ref{lemma:multicartan} follows that the right hand side it is equal to
	\begin{displaymath}
		\varsigma(k)\iota(v_{\partial F^k(r)} \omega)	=
		\varsigma(k)(-1)^{k+1} d \iota(v_{F^k(r)}) \omega =
		d f_k(q).
	\end{displaymath}
\end{proof}

An explicit construction of a comoment is generally more delicate in presence of cycles that are not boundaries.
Nevertheless, we know from theorem \ref{thm:son-cohomology} that the first two homology group of $\mathfrak{so}(n)$ are trivial, therefore it is easy to give the first two components of the comoment.
From the linearity of $f_k$, it is clear that we only need to give its action on the standard basis of the finite-dimensional vector space $\mathfrak{so}(n)$ defined in appendix, equation \eqref{eq:standard-basis}.

\paragraph{$f_1$ for any $SO(n)$.}

Since all 1-chains in the CE complex are automatically cycles, $H^1(\mathfrak{so}(n))=0$ implies that all elements of $\mathfrak{so}(n)$ are boundaries.
\\
Formula \eqref{eq:reductionformula} suggests a natural choice for the function $F^1$ when acting on elements of the standard basis:
\begin{displaymath}
	F^1(A_{a b}) = -\sum_{k=1}^n \dfrac{1}{n-2}  A_{k a}\wedge A_{k b}
\end{displaymath}
Therefore the first component of the comoment is given by
\begin{displaymath}
	f_1 (A_{a b}) = - \iota(v_{F^1(A_{a b})}) \omega =
	\dfrac{1}{n-2}\sum_{k=0}^n \iota(v_{A_{k b}})\iota(v_{A_{k a}})\omega.
\end{displaymath}
\begin{example}
In the three-dimensional case, denoting the three generators of $\mathfrak{so}(3)$ as $l_x,l_y,l_z$ (see appendix), one gets
\begin{displaymath}
	F^1 (l_i) = - \dfrac{1}{2}\sum_{j,k=1}^3\epsilon_{i j k} l_j \wedge l_k,
\end{displaymath}
where $\epsilon_{i j k}$ is the Levi-Civita symbol, and
\begin{displaymath}
	f_1(l_z) = \iota(v_{l_y \wedge l_z}) \omega = j^\ast \iota(E \wedge v_{l_y}\wedge v_{l_z}) dx^{123}.
\end{displaymath}
\end{example}

\paragraph{$f_2$ for any $SO(n)$.}
In this case there are two subsets of generators of $\Lambda^2 \mathfrak{so}(n)$ to consider:
\begin{displaymath}
	\begin{cases}
    p = A_{a b} \wedge A_{c d} \xmapsto{\,\partial\,} 0 & \text{for } a,b,c,d \text{ different} \\
    q = A_{j a} \wedge A_{j b} \xmapsto{\,\partial\,} -A_{a b} & \text{for } j,a,b \text{ different} 
  \end{cases}
\end{displaymath}
The first ones are boundaries and a primitive can be given as follows
\begin{displaymath}
F^2 (A_{a b} \wedge A_{c d}) = \dfrac{n-2}{4}\left( F^1(A_{a b}) \wedge A_{c d} - A_{a b}\wedge F^1(A_{c d}) \right)
\end{displaymath}
In the second case, we need to find a primitive of
\begin{displaymath}
	\begin{split}
		 F^1(\partial q) -q &= -F^1(A_{a b}) - A_{j a} \wedge A_{j b}
		 = \dfrac{1}{n-2}\sum_{k=0}^n( A_{k a}\wedge A_{k b} - A_{j a} \wedge A_{j b})
		 = \\
		 &=\dfrac{1}{n-2}\sum_{k=0}^n \partial (A_{k a}\wedge A_{j b}\wedge A_{k j}) =
		  \partial \left(\dfrac{1}{n-2}\sum_{k=0}^n (A_{k a}\wedge A_{j b}\wedge A_{k j}) \right)
	\end{split}
\end{displaymath}
The last equality suggests the following choice
\begin{displaymath}
 F^2(F^1(\partial q) -q) = \left(\dfrac{1}{n-2}\sum_{k=0}^n (A_{k a}\wedge A_{j b}\wedge A_{k j}) \right).
\end{displaymath}
Finally, one gets
\begin{displaymath}
\begin{split}
	f_2(A_{a b} \wedge A_{c d}) &= \dfrac{n-2}{4}\left(
	\iota(v_{F^1(A_{a b}) \wedge A_{c d}}) - \iota(v_{A_{a b}\wedge F^1(A_{c d})})\right)\omega\\
	f_2(A_{j a} \wedge A_{j b}) &= \dfrac{-1}{n-2}
	\sum_{k=0}^n	
	\left(\iota(v_{A_{k a}\wedge A_{j b}\wedge A_{k j}})\right)\omega.
\end{split}
\end{displaymath}

\subsection{Explicit comoment for $G_2$ on $(S^6,\phi)$}
We finish by providing a nice example of comoments for non-volume forms on spheres.\\

Recall that	$G_2$ is a subgroup of $SO(7)$ acting transitively and multisymplectically on $S^6$ with the standard volume. Therefore, according to Theorem \ref{thm:surprise}, the action $G_2\action (S^6,\omega)$ admits a comoment.
	
This group can be explicitly defined as the subgroup of $GL(7,\mathbb{R})$ preserving the multisymplectic $3$-form
\begin{equation}
	\phi
	=d x^{123}+ d x^{145}+ d x^{167}+ d x^{246}- d x^{257}- d x^{356}-d x^{347},
\end{equation}
where~$x = (x^i)$ are the standard coordinates on~$\mathbb{R}^7$ 
and~$d x^{ijk} = d x^i \wedge d x^j \wedge d x^k$. 
(See \cite{MR1939543} for further remarks on $G_2$-homogeneous multisymplectic forms and \cite{MR2253159} for details on the $G_2$-manifold $S^6$).

Considering the multisymplectic structure $j^\ast\phi$ on $S^6$, where $j$ is the inclusion of the sphere in $\mathbb{R}^7$, instead of the standard volume, it is possible to give an explicit comoment for the action of $G_2$:
\begin{lemma}
	The action $G_2 \circlearrowright (S^6, j^\ast\phi)$ admits an equivariant comoment given by $(k=1,2)$:
\begin{align*}
	f_{k} \colon \Lambda^k\mathfrak{g}_2 &\to \Omega^{2-k}(M),\\
	q&\mapsto (-1)^{k-1}j^\ast \iota_{v_q}\iota_E \dfrac{\phi}{3}	
\end{align*}	
\end{lemma}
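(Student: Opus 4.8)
The plan is to recognise the asserted comoment as a direct instance of Lemma~\ref{lem:extexact}, exactly in the spirit of Corollary~\ref{cor:sorn}: I would exhibit a $G_2$-invariant primitive of $j^\ast\phi$ on $S^6$ and simply read off its components. Since $j^\ast\phi\in\Omega^3(S^6)$ makes $S^6$ a $2$-plectic manifold (so here $n=2$), the only components to produce are $f_1$ and $f_2$, and the candidate invariant primitive is the homogeneity primitive $\alpha=j^\ast\iota_E\tfrac{\phi}{3}\in\Omega^2(S^6)$.

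First I would check that $\tfrac{1}{3}\iota_E\phi$ is a primitive of $\phi$ on $\mathbb{R}^7$. Since $\phi$ has constant coefficients it is closed, so Cartan's formula gives $d\iota_E\phi=\mathcal{L}_E\phi-\iota_E d\phi=\mathcal{L}_E\phi$. Because $\phi$ is a $3$-form with constant coefficients and $E$ is the Euler field, Euler homogeneity yields $\mathcal{L}_E\phi=3\phi$, whence $d\bigl(\tfrac{1}{3}\iota_E\phi\bigr)=\phi$. Pulling back along $j$ and using $d\circ j^\ast=j^\ast\circ d$ then gives $d\alpha=j^\ast\phi$. Note that $H^3(S^6)=0$ already forces the closed form $j^\ast\phi$ to be exact; the real content is producing an \emph{invariant} primitive, not merely any primitive.

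Next I would verify $G_2$-invariance of $\alpha$. The Euler field $E$ is invariant under any linear action, in particular under $G_2\subset SO(7)$, and $\phi$ is $G_2$-invariant by the very definition of $G_2$ as its stabiliser; hence $\iota_E\phi$ is $G_2$-invariant, and since the embedding $j$ is $G_2$-equivariant (the sphere being a $G_2$-invariant submanifold) so is $\alpha=j^\ast\iota_E\tfrac{\phi}{3}$. Lemma~\ref{lem:extexact} then produces an equivariant comoment with components $f_k(q)=(-1)^{k-1}\varsigma(k)\,\iota(v_q)\alpha$. A short sign check gives $\varsigma(1)=-(-1)^{1}=1$ and $\varsigma(2)=-(-1)^{3}=1$, and because the fundamental vector fields $v_q$ are tangent to $S^6$ (hence $j$-related to their linear extensions on $\mathbb{R}^7$) the contraction commutes with the pullback, $\iota(v_q)j^\ast(\cdots)=j^\ast\iota(v_q)(\cdots)$; the formula therefore becomes exactly $f_k(q)=(-1)^{k-1}j^\ast\iota_{v_q}\iota_E\tfrac{\phi}{3}$, as claimed.

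In truth there is no genuinely hard step: the entire content is the observation that a form with constant coefficients admits an explicit invariant primitive via contraction with $E$, mirroring the volume-form computation of Corollary~\ref{cor:sorn}. The only points deserving care are the sign bookkeeping in $\varsigma(k)$ and the tangency argument licensing the interchange of $\iota(v_q)$ with $j^\ast$, both of which are routine. If one wishes to stress that $(S^6,j^\ast\phi)$ is genuinely multisymplectic rather than merely pre-multisymplectic, one can invoke the non-degeneracy of the nearly Kähler $3$-form on $S^6$, although Lemma~\ref{lem:extexact} itself requires only closedness.
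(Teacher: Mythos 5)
Your proposal is correct and follows exactly the paper's route: the paper's proof is the one-line observation that $\tfrac{1}{3}\iota_E\phi$ is a $G_2$-invariant primitive of $\phi$ (the paper writes $\mathbb{R}^3$ in a typo for $\mathbb{R}^7$) and then invokes Lemma~\ref{lem:extexact}. You merely spell out the details the paper leaves implicit --- the Euler homogeneity $\mathcal{L}_E\phi=3\phi$, the $G_2$-invariance, the signs $\varsigma(1)=\varsigma(2)=1$, and the compatibility of $\iota(v_q)$ with $j^\ast$ --- all of which check out.
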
	
\begin{proof}
	It follows from Lemma \ref{lem:extexact}, noting that $(\frac{1}{3} \iota_E \phi)$ is a $G_2$ invariant primitive of $\phi$ in $\mathbb{R}^3$.
\end{proof}

\appendix
\section{Appendix}
\subsection{Useful formulas in Cartan calculus}
We make use of the following formula:
\begin{lemma}[Multi-Cartan magic formula]\label{lemma:multicartan}
	\begin{equation}
	\begin{split}
		(-1)^m \textrm{d} \iota(x_1\wedge\dots\wedge x_m) &= \iota(x_1\wedge\dots\wedge x_m) \textrm{d} +\\
		&+ \iota(\partial \, x_1\wedge\dots\wedge x_m) +\\
		&+ \sum_{k=1}^{m} (-1)^k  \iota( x_1\wedge\dots\wedge  \hat{x_k}\, \wedge\dots\wedge x_m) \mathcal{L}_{x_k}.
	\end{split}
	\end{equation}
\end{lemma}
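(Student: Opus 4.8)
The plan is to argue by induction on the number $m$ of contracted vector fields, peeling off the last factor $x_m$ and reducing to the ordinary Cartan magic formula together with the inductive hypothesis. For the base case $m=1$, observe that $\partial x_1 = 0$ and that the empty contraction is the identity, so the claimed identity collapses to $-d\iota_{x_1} = \iota_{x_1}d - \mathcal{L}_{x_1}$, which is exactly the classical Cartan formula $\mathcal{L}_{x_1} = d\iota_{x_1} + \iota_{x_1}d$.

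For the inductive step I would write $\mathbf{x} = x_1\wedge\dots\wedge x_{m-1}$ and use the contraction convention to factor $\iota(x_1\wedge\dots\wedge x_m) = \iota_{x_m}\iota(\mathbf{x})$. Applying the classical identity $d\iota_{x_m} = \mathcal{L}_{x_m} - \iota_{x_m}d$ gives
\[
(-1)^m d\iota(x_1\wedge\dots\wedge x_m) = (-1)^m \mathcal{L}_{x_m}\iota(\mathbf{x}) - (-1)^m \iota_{x_m} d\iota(\mathbf{x}).
\]
I would then treat the two summands separately. In the first, I commute $\mathcal{L}_{x_m}$ to the right through $\iota(\mathbf{x})$ using $[\mathcal{L}_X,\iota_Y] = \iota_{[X,Y]}$; this produces the term $(-1)^m \iota(\mathbf{x})\mathcal{L}_{x_m}$, which is precisely the $k=m$ summand of the desired Lie-derivative sum, plus a family of commutator terms in which a single $\iota_{x_j}$ is replaced by $\iota_{[x_m,x_j]}$. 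In the second summand, I substitute the inductive hypothesis for $d\iota(\mathbf{x})$; after contracting with $\iota_{x_m}$ on the left and using $\iota_{x_m}\iota(\mathbf{x}) = \iota(x_1\wedge\dots\wedge x_m)$ together with $\iota_{x_m}\iota(x_1\wedge\dots\hat{x_k}\dots\wedge x_{m-1}) = \iota(x_1\wedge\dots\hat{x_k}\dots\wedge x_m)$, one obtains the $d$-term $\iota(x_1\wedge\dots\wedge x_m)d$, the remaining $k=1,\dots,m-1$ summands of the Lie-derivative sum, and one extra term $\iota_{x_m}\iota(\partial\mathbf{x})$. A short sign check, $-(-1)^m(-1)^{m-1} = 1$, confirms that all these appear with the correct coefficients.

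It then remains to verify that the two residual families of terms assemble into $\iota(\partial(x_1\wedge\dots\wedge x_m))$. Here I would use the Leibniz-type splitting
\[
\partial(x_1\wedge\dots\wedge x_m) = \partial(\mathbf{x})\wedge x_m + \sum_{i=1}^{m-1}(-1)^{i+m}[x_i,x_m]\wedge x_1\wedge\dots\hat{x_i}\dots\wedge x_{m-1},
\]
which is immediate from the explicit formula \eqref{eq:CE_boun} by isolating the pairs with $j=m$. Contracting, the first summand matches $\iota_{x_m}\iota(\partial\mathbf{x})$, while the second must be reconciled with the commutator terms coming from $\mathcal{L}_{x_m}\iota(\mathbf{x})$. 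The main obstacle is exactly this sign and index bookkeeping: because of the order-reversing convention $\iota(\xi_1\wedge\dots\wedge\xi_k) = \iota_{\xi_k}\dots\iota_{\xi_1}$, moving each bracket $[x_m,x_j]$ from the $j$-th slot to the front contributes a sign $(-1)^{j-1}$, which, together with the antisymmetry $[x_m,x_j] = -[x_j,x_m]$ and the global prefactor $(-1)^m$, must reproduce the coefficients $(-1)^{i+m}$ above. Carrying out this comparison term by term completes the induction and hence the proof.
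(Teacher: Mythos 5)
Your argument is correct, and it is genuinely different from what the paper does: the paper offers no proof at all for Lemma \ref{lemma:multicartan}, deferring entirely to Lemma 3.4 of \cite{Madsen2013}, whereas you give a self-contained induction on $m$ built only from the classical Cartan formula and the commutation relation $\mathcal{L}_X\iota_Y=\iota_Y\mathcal{L}_X+\iota_{[X,Y]}$ (which the paper itself records in its appendix). I checked the one step you leave as a sketch, and it does close: splitting \eqref{eq:CE_boun} at $j=m$ gives your Leibniz decomposition, the $j=m$ part contracts to $\iota_{x_m}\iota(\partial\mathbf{x})$ since under the order-reversing convention $\iota(w\wedge x_m)=\iota_{x_m}\iota(w)$, and for the commutator family the term with bracket in slot $j$ reads $\iota(x_1\wedge\dots\wedge[x_m,x_j]\wedge\dots\wedge x_{m-1})$, so moving the bracket to the front costs $(-1)^{j-1}$, antisymmetry costs another $-1$, and with the prefactor $(-1)^m$ one gets $(-1)^{m+j}\,\iota\bigl([x_j,x_m]\wedge x_1\wedge\dots\wedge\hat{x_j}\wedge\dots\wedge x_{m-1}\bigr)$, exactly the coefficient $(-1)^{i+m}$ in the boundary formula; your sign check $-(-1)^m(-1)^{m-1}=1$ for the inductive-hypothesis terms is likewise right, and the base case correctly reduces to Cartan's formula since $\partial x_1=0$. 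What your route buys is independence from the external reference and an argument whose sign conventions visibly match the paper's own (the order-reversing contraction and the Chevalley--Eilenberg boundary \eqref{eq:CE_boun}), which is worth having since sign conventions for $\iota$ and $\partial$ vary across the literature; what the paper's citation buys is brevity. To turn your proposal into a complete proof one need only write out the term-by-term comparison you describe, which, as verified above, involves no further ideas.
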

\begin{proof}
	See lemma 3.4 in \cite{Madsen2013}.
\end{proof}
 
\begin{defi}\label{multidef}
Given a differential form $\Omega\in \Omega^\bullet(M)$ and a multi-vector field $Y\in \Gamma(\Lambda^m TM)$, the \emph{Lie derivative of $\Omega$ along $Y$} is defined as the graded commutator
\begin{equation}
	\mathcal{L}_Y\Omega:=d\iota_Y\Omega-(-1)^m\iota_Y d\Omega.
\end{equation}
\end{defi}
\begin{remark}\label{rem:Lieder}
	This definition allows to combine the first and last term in the above formula into a Lie derivative. Hence the formula of Lemma \ref{lemma:multicartan} can be also written as
	\begin{displaymath}
		\mathcal{L}_{v_{1} \wedge \dots
		\wedge {v}_{m}}\Omega 
		=(-1)^m \biggr[
		\iota(\partial(v_{1}\wedge\dots\wedge v_{m}))\Omega+\sum_{1=1}^{m} (-1)^{i} 
		\iota( v_{1} \wedge \dots
		\wedge \hat{v}_{i} \wedge \dots \wedge {v}_{m})\mathcal{L}_{v_i}\Omega\biggr].
	\end{displaymath}	 
\end{remark} 

\begin{lemma}
	\begin{equation}\label{eq:multiliecartan}
		\mathcal{L}_v \iota (x_1 \wedge \dots \wedge x_k) =
		\iota([v,x_1\wedge\dots\wedge x_k ] ) +
		\iota(x_1 \wedge \dots \wedge x_k) \mathcal{L}_v.
	\end{equation}
\end{lemma}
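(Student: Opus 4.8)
The plan is to establish \eqref{eq:multiliecartan} as an identity of operators on $\Omega^\bullet(M)$ by induction on the number $k$ of contracted vector fields. The base case $k=1$ is precisely the classical Cartan commutation relation $[\mathcal{L}_v,\iota_{x_1}]=\iota_{[v,x_1]}$, which follows at once from $\mathcal{L}_v=d\iota_v+\iota_v d$ together with $\mathcal{L}_v d=d\mathcal{L}_v$. Rewriting it as $\mathcal{L}_v\iota_{x_1}=\iota_{[v,x_1]}+\iota_{x_1}\mathcal{L}_v$ reproduces \eqref{eq:multiliecartan} for $k=1$, since the adjoint action \eqref{eq:adjointactionwedge} reduces to $[v,x_1]$ in this degree.

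For the inductive step I would exploit the factorization dictated by the contraction convention, namely $\iota(x_1\wedge\dots\wedge x_k)=\iota_{x_k}\,\iota(x_1\wedge\dots\wedge x_{k-1})$. Composing on the left with $\mathcal{L}_v$ and inserting the base-case relation for $\iota_{x_k}$ gives
\[
\mathcal{L}_v\,\iota(x_1\wedge\dots\wedge x_k)=\iota_{[v,x_k]}\,\iota(x_1\wedge\dots\wedge x_{k-1})+\iota_{x_k}\,\mathcal{L}_v\,\iota(x_1\wedge\dots\wedge x_{k-1}).
\]
Applying the induction hypothesis to the last factor turns the second summand into $\iota_{x_k}\,\iota([v,x_1\wedge\dots\wedge x_{k-1}])+\iota_{x_k}\,\iota(x_1\wedge\dots\wedge x_{k-1})\,\mathcal{L}_v$. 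The very last term equals $\iota(x_1\wedge\dots\wedge x_k)\,\mathcal{L}_v$, which is exactly the second term on the right-hand side of \eqref{eq:multiliecartan}, so it remains only to match the two ``bracket'' contributions.

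Using the elementary rule $\iota_{y}\,\iota(z_1\wedge\dots\wedge z_{k-1})=\iota(z_1\wedge\dots\wedge z_{k-1}\wedge y)$ (the last wedge factor becomes the outermost contraction, with no sign), the first summand is $\iota(x_1\wedge\dots\wedge x_{k-1}\wedge[v,x_k])$ and the residual part of the second is $\iota([v,x_1\wedge\dots\wedge x_{k-1}]\wedge x_k)$. By linearity of $\iota$ in its multivector argument, their sum is $\iota([v,x_1\wedge\dots\wedge x_{k-1}]\wedge x_k+x_1\wedge\dots\wedge x_{k-1}\wedge[v,x_k])$, which by the Leibniz rule $[v,(x_1\wedge\dots\wedge x_{k-1})\wedge x_k]=[v,x_1\wedge\dots\wedge x_{k-1}]\wedge x_k+(x_1\wedge\dots\wedge x_{k-1})\wedge[v,x_k]$ is precisely $\iota([v,x_1\wedge\dots\wedge x_k])$. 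This Leibniz rule is just the statement that the adjoint action is a degree-$0$ derivation of the exterior algebra of multivector fields. I expect the main obstacle to be purely one of convention-matching rather than substance: one must verify that the reversed ordering built into $\iota(\xi_1\wedge\dots\wedge\xi_k)=\iota_{\xi_k}\dots\iota_{\xi_1}$ and into \eqref{eq:adjointactionwedge} generates no spurious signs, so that the derivation property closes the induction exactly. Once this compatibility is confirmed the argument is routine, and one could alternatively phrase the whole computation as the graded-commutator identity $[\mathcal{L}_v,\iota(\,\cdot\,)]=\iota([v,\,\cdot\,])$.
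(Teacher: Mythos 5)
Your induction is essentially identical to the paper's own proof, which is stated in one line as ``an iterated application of the Cartan commutation relation $\mathcal{L}_x \iota_y = \iota_y \mathcal{L}_x + \iota_{[x,y]}$ together with expression \eqref{eq:adjointactionwedge}'': under the convention $\iota(\xi_1\wedge\dots\wedge\xi_k)=\iota_{\xi_k}\cdots\iota_{\xi_1}$ your bookkeeping is correct and closes with no spurious signs, the bracket term assembling into the degree-$0$ derivation $\sum_l x_1\wedge\dots\wedge[v,x_l]\wedge\dots\wedge x_k$. Your caution about convention-matching is in fact warranted: the displayed formula \eqref{eq:adjointactionwedge}, read literally (reversed ordering with sign $(-1)^{k-l}$), differs from this derivation extension by an overall factor $(-1)^{k(k-1)/2}$, an apparent sign/ordering typo in the paper, so the lemma holds with the adjoint action understood in the standard derivation sense --- precisely the reading your proof uses.
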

\begin{proof}
	It is simply an iterated application of the Cartan's commutation relation:
	\begin{displaymath}
		\mathcal{L}_x \iota_y = \iota_y \mathcal{L}_x + \iota_{[x,y]}.
	\end{displaymath}
	together with expression \eqref{eq:adjointactionwedge}.
\end{proof}

\subsection{Some technical details about $\mathfrak{so}(n)$}\label{appendix:details_son}

Recall that $\mathfrak{so}(n)$ is the Lie sub-algebra of $\mathfrak{gl}(n,\mathbb{R})$ consisting of all skew-symmetric square matrices. A basis can be constructed as follows:
\begin{equation}\label{eq:standard-basis}
	\mathcal{B}\coloneqq \big\lbrace 	A_{a b} = (-1)^{1+a+b} \left( E_{a b} - E_{b a}\right)
	\quad \vert \quad 1<a<b\leq n \big\rbrace
\end{equation}
where $E_{a b}$ is the matrix with all entries equal to zero and entry $(a,b)$ equal to one.
\\
The fundamental vector field of $A_{a b}$ associated to the linear action of $SO(n)$ on $\mathbb{R}^n$ reads as follows:
\begin{displaymath}
	v_{A_{a b}}= \sum_{i,j}[A_{a b}]_{i j}x^j \partial_i  = (-1)^{1+a+b}\left(x^a \partial_b - x^b \partial_a\right)
\end{displaymath}
\begin{example}\label{ex:angularmomenta}
In $\mathbb{R}^3$ we have three matrices
\begin{displaymath}
	l_x = A_{1\, 2} = \begin{bmatrix} 0 & 1 & 0 \\ -1 & 0 & 0 \\ 0 & 0 & 0 \end{bmatrix} \qquad
	l_y = A_{1\, 3} = \begin{bmatrix} 0 & 0 & -1 \\ 0 & 0 & 0 \\ 1 & 0 & 0 \end{bmatrix} \qquad
	l_z = A_{2\, 3} = \begin{bmatrix} 0 & 0 & 0 \\ 0 & 0 & 1 \\ 0 & -1 & 0 \end{bmatrix}
~.
\end{displaymath}
\end{example}
\noindent Using such a basis, the structure constants read as follows:
\begin{lemma}
	\begin{displaymath}
	\begin{split}
		[A_{a b}, A_{c d}] =& (-1)^{(b+c+1)}\delta_{b c} A_{a d} +
		(-1)^{(a+d+1)}\delta_{a d} A_{b c} +\\
		&	(-1)^{(d+b+1)}\delta_{d b} A_{a c} +
		(-1)^{(a+c+1)}\delta_{c a} A_{d b}	
	\end{split}
	\end{displaymath}
		in particular:
	\begin{equation}\label{eq:reductionformula}
		[A_{k a}, A_{k b}] = A_{a b}
	\end{equation}
	for all $k \neq a,b$.
\end{lemma}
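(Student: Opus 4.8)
The plan is to reduce everything to the standard commutation relation among the elementary matrices $E_{ij}$ and then track signs carefully. Recall that $E_{ij}E_{kl}=\delta_{jk}E_{il}$, so that $[E_{ij},E_{kl}]=\delta_{jk}E_{il}-\delta_{li}E_{kj}$. Since $A_{ab}=(-1)^{1+a+b}(E_{ab}-E_{ba})$, I would first factor out the scalar prefactors, using $(-1)^{1+a+b}(-1)^{1+c+d}=(-1)^{a+b+c+d}$, to obtain $[A_{ab},A_{cd}]=(-1)^{a+b+c+d}[E_{ab}-E_{ba},\,E_{cd}-E_{dc}]$. This isolates all the index-dependent signs into a single global factor and a bracket of antisymmetrized matrix units.

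Next I would expand the inner bracket into the four matrix-unit commutators $[E_{ab},E_{cd}]-[E_{ab},E_{dc}]-[E_{ba},E_{cd}]+[E_{ba},E_{dc}]$ and apply the relation above to each. Collecting the eight resulting terms according to their Kronecker delta gives the compact intermediate form
\[
[E_{ab}-E_{ba},\,E_{cd}-E_{dc}]=\delta_{bc}(E_{ad}-E_{da})+\delta_{ad}(E_{bc}-E_{cb})+\delta_{bd}(E_{ca}-E_{ac})+\delta_{ac}(E_{db}-E_{bd}).
\]
Each antisymmetric combination is then rewritten via $E_{ij}-E_{ji}=(-1)^{1+i+j}A_{ij}$, after which I would reinstate the global sign $(-1)^{a+b+c+d}$. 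Cancelling the even contributions (for instance the $2a+2d$ appearing in the first summand collapses its prefactor to $(-1)^{1+b+c}$) lets one read off the four structure-constant terms; the reduction of the off-diagonal outputs such as $A_{ca}$ and $A_{db}$ to canonically ordered basis elements is exactly the place where the signs must be matched against the claimed formula.

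Finally, the displayed special case \eqref{eq:reductionformula} follows by specializing the index pairs to $(k,a)$ and $(k,b)$ with $k,a,b$ pairwise distinct: three of the four delta factors (namely $\delta_{ak}$, $\delta_{kb}$, and $\delta_{ba}$) vanish by distinctness, and only the term carrying $\delta_{kk}=1$ survives, producing $(-1)^{k+k+1}A_{ba}=-A_{ba}=A_{ab}$ by the antisymmetry $A_{ji}=-A_{ij}$. This can also be checked directly and independently from the factored form above, as a sanity check on the general computation.

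I expect the only genuine obstacle to be sign bookkeeping, concentrated in the step where the antisymmetric combinations $E_{ij}-E_{ji}$ are converted back into the \emph{canonically ordered} basis elements using $A_{ji}=-A_{ij}$. The nonstandard normalization $(-1)^{1+a+b}$ in the definition of the basis was chosen precisely so that these signs telescope cleanly, but a careless reindexing there is the easiest point at which to drop a sign; everything else is a routine finite computation independent of $n$.
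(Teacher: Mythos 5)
Your method is the only natural one here --- the paper in fact states this lemma without any proof, so there is nothing on the paper's side to compare against --- and your intermediate identity
\[
[E_{ab}-E_{ba},\,E_{cd}-E_{dc}]=\delta_{bc}(E_{ad}-E_{da})+\delta_{ad}(E_{bc}-E_{cb})+\delta_{bd}(E_{ca}-E_{ac})+\delta_{ac}(E_{db}-E_{bd})
\]
is correct, as is your derivation of the special case \eqref{eq:reductionformula} from the surviving $\delta_{kk}$-term. The genuine problem sits exactly in the step you deferred (``the signs must be matched against the claimed formula''): carrying it out, the third summand becomes
\[
(-1)^{a+b+c+d}\,\delta_{bd}\,(E_{ca}-E_{ac})
=(-1)^{a+b+c+d}(-1)^{1+c+a}\,\delta_{bd}\,A_{ca}
=(-1)^{b+d+1}\,\delta_{bd}\,A_{ca}
=(-1)^{b+d}\,\delta_{bd}\,A_{ac},
\]
which is the \emph{negative} of the printed term $(-1)^{(d+b+1)}\delta_{db}A_{ac}$. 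So the sign-matching you postponed cannot succeed: the lemma as printed has a sign error in its third summand (it should carry $A_{ca}$ rather than $A_{ac}$, or equivalently drop the $+1$ in that exponent). A concrete check in $\mathfrak{so}(3)$, using the matrices of Example \ref{ex:angularmomenta}: here $a=1$, $b=2$, $c=3$, $d=2$, only $\delta_{db}=\delta_{22}$ survives, and a direct computation gives $[A_{12},A_{32}]=[l_x,-l_z]=l_y=A_{13}$, whereas the printed formula yields $(-1)^{2+2+1}A_{13}=-A_{13}$.

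Note also that your sanity check via the special case has no power to detect this: when the index pairs are $(k,a),(k,b)$ with $k,a,b$ pairwise distinct, the third summand vanishes identically, and only the fourth summand (with $\delta_{ca}$) is exercised --- which you handled correctly, obtaining $(-1)^{2k+1}A_{ba}=A_{ab}$. The first, second and fourth summands, and in particular \eqref{eq:reductionformula}, are correct, and these are all the paper actually uses (e.g.\ for $F^1$ and $f_2$ in Subsection \ref{subsectra}), so your plan is salvageable verbatim: finish the bookkeeping and record the corrected third term. As written, however, the proposal stops precisely at its critical step, and the claim that completing it verifies the lemma as stated would be false.
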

\begin{thm}\label{thm:son-cohomology}
The cohomology groups of $SO(n)$ can be described as follows:
\begin{displaymath}
	H^\bullet(SO(n); \mathbb{R}) \simeq
	\begin{cases}
		\big\langle \lbrace a_{4\,i-1} \vert 1\leq i\leq k \rbrace \cup a^\prime_{2\,k+1} \big\rangle & n=2k+2\\
		\big\langle \lbrace a_{4\,i-1} \vert 1\leq i\leq k \rbrace \big\rangle & n=2k+1
	\end{cases}
\end{displaymath}
where $\langle \dots \rangle$ denotes the exterior algebra generated by the elements in the set and subscripts denote degrees, so $a_i \in H^i$ and $a'_{2k+1} \in H^{2k+1}$.
\end{thm}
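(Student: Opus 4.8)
Theorem \ref{thm:son-cohomology} is a classical computation of the real cohomology ring of the special orthogonal groups. Let me plan a proof.

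The plan is to compute $H^\bullet(SO(n);\mathbb{R})$ inductively using the fibration $SO(n-1)\hookrightarrow SO(n)\to S^{n-1}$, where the projection is the map sending a matrix to the image of a fixed unit vector. The base case $SO(2)\cong S^1$ gives $H^\bullet(SO(2))=\langle a_1'\rangle$, matching the formula for $n=2$ ($k=0$). The inductive step splits according to the parity of the sphere $S^{n-1}$ appearing as the base of the fibration, and I would run the Leray--Serre spectral sequence (with real coefficients, so that everything is a rational/real homology computation and torsion is invisible) to pass from $SO(n-1)$ to $SO(n)$.

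The key steps, in order, are as follows. First I would record that with real coefficients the cohomology of a compact Lie group is an exterior algebra on odd-degree generators (a standard fact following from the fact that $SO(n)$ is homotopy equivalent to a product of spheres through a rational equivalence, or from the Hopf theorem on $H$-spaces). Second, I would analyze the spectral sequence of the fibration $SO(n)\to S^{n-1}$: when $n-1$ is even, the Euler class obstruction forces the fundamental class of $S^{n-1}$ to transgress nontrivially, which \emph{kills} a potential generator and accounts for why one generator is removed (or shifted) as one passes from $SO(n-1)$; when $n-1$ is odd, the class of $S^{n-1}$ survives and contributes a new exterior generator $a'$ in that odd degree. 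Concretely, each jump of dimension by two in $n$ introduces one new generator $a_{4i-1}$ of degree $4i-1$, while for even $n=2k+2$ there is in addition the ``extra'' generator $a_{2k+1}'$ coming from the odd sphere $S^{2k+1}=S^{n-1}$. Tracking the degrees carefully yields exactly the two cases in the statement. Third, I would note that since $SO(n)$ is an $H$-space the products of these transgressive generators are nonzero and the ring is precisely the free exterior algebra on them, so no relations beyond the exterior ones occur. This can be checked by a dimension count: the Poincaré polynomial of an exterior algebra on the listed generators must match $\prod(1+t^{d_i})$, and comparing with the known Poincaré series of $SO(n)$ pins down the generators.

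The main obstacle is correctly bookkeeping the transgressions across the parity change, i.e.\ making precise why an even sphere base truncates a generator whereas an odd sphere base adjoins one, and verifying that the surviving classes land in degrees $4i-1$ (for the ``stable'' generators) plus the single anomalous class $a'_{2k+1}$ in the even case. This is exactly the subtlety that the paper itself flags in the remark before Proposition \ref{prop:annoying}: \emph{$SO(2n)$ has a class in degree $2n-1$ which does not come from $SO(2n+1)$}. Rather than reproduce the full spectral-sequence argument, I would cite the standard references (e.g.\ \cite{MR1867354}, Theorem 3D.4, where this computation is carried out in detail) for the identification of the generators and their degrees, and merely confirm that the reindexing in terms of $k$ reproduces the stated exterior-algebra description.
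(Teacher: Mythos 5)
Your proposal is correct and ultimately takes the same route as the paper, whose entire proof of Theorem \ref{thm:son-cohomology} is the citation to Section 3D of \cite{MR1867354} --- your concluding deferral to Hatcher is exactly the paper's argument. The inductive Serre spectral-sequence sketch you add is a sound outline of the standard proof, with the one caveat that in the even-base case ($SO(2k+2)\hookrightarrow SO(2k+3)\to S^{2k+2}$) the transgression does not merely delete a generator: it kills $a'_{2k+1}$ and the base fundamental class as independent generators while their product survives as the new generator in degree $(2k+1)+(2k+2)=4(k+1)-1$, which is the precise content of your hedge ``or shifted''.
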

\begin{proof}
	See section 3D in \cite{MR1867354}.
\end{proof}

			\bibliographystyle{hep}
			\bibliography{biblio}
\end{document}